\newtheorem{teo}{Theorem}
\newtheorem{defi}{Definition}
\newtheorem{cor}{Corollary}
\newtheorem{prop}{Proposition}
\newtheorem{exa} {Example}
\newtheorem{rem} {Remark}
\title{\huge \bf General perversities and $L^2$ de Rham and Hodge theorems for stratified pseudomanifolds}
\author{Francesco Bei}
\date{}
\begin{document}

\maketitle
 \pagestyle{myheadings} \markboth {\centerline{\small \tt Francesco Bei}}{\centerline{\small \tt General perversities and $L^2$ de Rham and Hodge theorems for stratified pseudomanifolds}}

\bigskip

\begin{abstract}
Given  a compact stratified pseudomanifold $X$ with a Thom-Mather stratification and a class of riemannian metrics over its regular part, we study the relationships  between the $L^{2}$ de Rham and Hodge cohomology of the regular part of $X$  and the intersection cohomology of $X$ associated to some perversities.  More precisely, to a kind of metric which we call  \textbf{quasi edge with weights}, we associate two \textbf{general perversities} in the sense of G. Friedman, $p_{g}$ and its dual $q_{g}$. We then  show that:
\begin{enumerate}
\item The absolute $L^{2}$ Hodge cohomology is isomorphic to the maximal $L^{2}$ de Rham cohomology and  this is in turn isomorphic to the intersection cohomology associated to the perversity $q_{g}$. 
\item The relative $L^{2}$ Hodge cohomology is isomorphic to the minimal $L^{2}$ de Rham cohomology and  this is in turn isomorphic to the intersection cohomology associated to the perversity $p_{g}$.
\end{enumerate}
Moreover we give a partial answer to the inverse question: given $p$, a general perversity in the sense of Friedman on $X$, is there a riemannian metric $g$ on $reg(X)$ such that a $L^{2}$ de Rham and Hodge theorem hold for $g$ and $p$? We then show that the answer is positive in the following two cases: if $p$ is greater or equal to the upper middle perversity or if it is smaller or equal to the lower middle one. Finally we conclude giving several corollaries about the properties of these $L^{2}$ Hodge and de Rham cohomology groups.
\end{abstract}

\textbf{Keywords}: Stratified pseudomanifold, $L^{2}$ cohomology, Hodge cohomology, intersection cohomology, general perversity. 

\section*{Introduction}
Let $X$ be a compact stratified pseudomanifold and let $reg(X)$ be its regular part.  The study of the relationships between the $L^{2}$ de Rham and Hodge cohomology associated to a given riemannian metric and the intersection cohomology of $X$ has a long history initiated at the end of seventies with  the celebrated papers of J. Cheeger \cite{JC} and \cite{C}. In \cite{JC} Cheeger established a Hodge theorem for manifolds with isolated conic singularities; in \cite{C} he showed that if $X$ is a closed Witt $PL$ stratified pseudomanifold and if $g$ is an admissible riemannian metric on $reg(X)$ 
 then the $L^{2}$ maximal Hodge cohomology is finite dimensional and isomorphic to the maximal $L^{2}$ de Rham cohomology.  Furthermore, without the Witt assumption but using some additional hypothesis about the calculation of the maximal $L^{2}$ cohomology of a cone over a riemannian manifold, see lemma 3.4 in \cite{C}, he showed that  the maximal $L^{2}$ de Rham cohomology is  isomorphic to the intersection cohomology of $X$ associated to the lower middle perversities. Subsequently this $L^{2}$ de Rham theorem of Cheeger was generalized by M. Nagase, which in \cite{MN} showed that given a perversity $p\leq \underline{m}$, where $\underline{m}$ is the lower middle perversity, it is  possible to construct over the regular part of $X$ a riemannian metric  $g$ associated to the the perversity $p$ such that the maximal $L^{2}$ de Rham cohomology is isomorphic to the intersection cohomology of $X$ associated to the perversity $p$. In both these papers the proofs of the $L^{2}$ de Rham theorem were done by constructing a subcomplex of the complex of $L^{2}$ differential form with weak differential quasi-isomorphic to it and integrating the forms of this subcomplex over some $PL-$chains. Afterwards, in the paper \cite{MAN}, Nagase presented a new proof of his $L^{2}$ de Rham Theorem that employed the sheaf-theoretic approach of Goresky-MacPherson \cite{GMA}\footnote{In \cite{C}, \cite{MN} and \cite{MAN} the symbol $\overline{m}$ is used for the lower middle perversity.}. 
\\Recently R. Mazzeo and E. Hunsicker proved \cite{HM} a $L^{2}$ de Rham and Hodge theorem  on a manifold with edges. We recall that a manifolds with edges  is a compact stratified pseudomanifold of depth one, $X\supset B,\ B=\bigcup_{j} B_{j} $. For each stratum $B_{j}$, which in this case is just a closed manifold, there exists an open neighbourhood $U_{j}$ of $B_{j}$ in $X$ which is diffeomorphic to a bundle of cones, that is, a bundle with basis $B_{j}$   and fibers $C(F_{j})$ with $F_{j}$ a closed manifold that depends only on $B_{j}$.  Over $X-B$ they consider an edge metric $g$, that is, a riemannian metric   such that over each $U_{j}-B_{j}$ it is quasi-isometric to $dr\otimes dr+\pi_{j}^{*}h_{j}+r^{2}k_{j}$ where $k_{j}$ is a two symmetric tensor field which restricts to a metric on each fiber $F_{j}$, $\pi_{j}:U_{j}\rightarrow B_{j}$ is the projection and $h_{j}$ is a riemannian metric on $B_{j}$. Then for the maximal and minimal $L^{2}$ de Rham cohomology and for the absolute and relative Hodge cohomology the following isomorphisms holds :
$$ I^{\underline{m}}H^{i}(X)\cong H^{i}_{2,max}(reg(X),g)\cong \mathcal{H}^{i}_{abs}(reg(X),g)$$
$$ I^{\overline{m}}H^{i}(X)\cong H^{i}_{2,min}(reg(X),g)\cong\mathcal{H}^{i}_{rel}(reg(X),g)
\footnote{In \cite{HM} the first isomorphism involves the upper middle perversity and  the second involves the lower middle perversity. The reason is that in \cite{HM} the definitions of these perversities are reversed from the usual ones. }$$This result was later generalized by Hunsicker \cite{H}. Given a manifold with edges with only one singular stratum $B$, Hunsicker considers a riemannian metric $g$ on $reg(X)$ such that over  $U-B$ it is quasi-isomorphic to $$dr\otimes dr+\pi^{*}h+r^{2c}k\ \text{where}\ 0<c\leq 1.$$ The isomorphisms between the $L^{2}$ de Rham, the Hodge and the intersection cohomology of $X$ that she gets, for this kind of metrics, are the following:
$$\mathcal{H}^{i}_{abs}(reg(X),g)\cong H^{i}_{2,max}(reg(X),g)\cong \left\{
\begin{array}{ll}
 I^{\underline{m}-[[\frac{1}{2c}]]}H^{i}(X)&  f\ is\ even\\
I^{\underline{m}-[[\frac{1}{2}+\frac{1}{2c}]]}H^{i}(X)&  f\ is\ odd
\end{array}
\right.
$$

$$\mathcal{H}^{i}_{rel}(reg(X),g)\cong H^{i}_{2,min}(reg(X),g)\cong \left\{
\begin{array}{ll}
 I^{\overline{m}+[[\frac{1}{2c}]]}H^{i}(X)&  f\ is\ even\\
I^{\overline{m}+[[\frac{1}{2}+\frac{1}{2c}]]}H^{i}(X)&  f\ is\ odd
\end{array}
\right.
$$
where $[[x]]$ denotes the greatest integer strictly less than $x$.\footnote{ Also in this case there is a switch of perversities from \cite{H}. It is caused by the fact that in \cite{H} the intersection cohomology for a perversity $p$ is the cohomology of the Deligne sheaf for such perversity or equivalently the cohomology of the complex of the intersection chain sheaves for the perversity $p$.} It is immediate to note that when $c=1$ then $[[\frac{1}{2}]]=[[\frac{1}{2}+\frac{1}{2c}]]=0$ and then this result reduces to the  results in \cite{HM}.\\ We note that all the previous results we recalled can be interpreted in two different way: on the one hand they assert that for certain riemannian metrics on $reg(X)$ the $L^{2}$ de Rham and Hodge cohomology groups associated to them are isomorphic to the intersection cohomology groups associated certain perversities; therefore these $L^{2}$ de Rham and Hodge cohomology groups  do not depend from the metrics chosen but only from the stratified homotopy class of $X$ and from the perversity associated to the metrics. On the other hand the previous results assert that for some perversities the intersection cohomology groups associated to them are constructible in a analytic way. In other words there is a riemannian metric on $reg(X)$ such that a $L^{2}$ de Rham and Hodge theorem holds for the perversity considered. \vspace{1 cm} 

The main goal of this paper is to investigate the two following questions:
\begin{enumerate} 
\item Is it possible to generalize the  result established by Hunsicker in the  edge case to the case of any compact and oriented smoothly stratified pseudomanifold with a Thom-Mather stratification?
\item Given $p$, a general perversity in the sense of Friedman on $X$, is there a riemannian metric $g$ on $reg(X)$ such that a $L^{2}$ de Rham and Hodge theorem holds for them?
\end{enumerate}

We give a positive answer to the first question and we show that if $p$ is greater or equal to the upper middle perversity or smaller or equal to the lower middle one then also the second question has a positive answer. In particular this last result generalizes the result of Nagase in \cite{MN}. \\
More precisely given $X$, a compact and oriented smoothly stratified pseudomanifold with a Thom-Mather stratification, we consider a riemannian metric $g$ over its regular part, $reg(X)$, that satisfies the following properties:

\begin{enumerate}
\item Take any stratum $Y$ of $X$;  for each $q\in Y$ there exist an open neighbourhood $U$ of $q$ in $Y$ such that $\phi:\pi_{Y}^{-1}(U)\rightarrow U\times C(L_{Y})$ is a stratified isomorphism; in particular $\phi:\pi_{Y}^{-1}(U)\cap reg(X)\rightarrow U\times reg(C(L_{Y}))$ is a diffeomorphism. Then, for each $q\in Y$, there exists one of these trivializations $(\phi,U)$ such that $g$ restricted on $\pi_{Y}^{-1}(U)\cap reg(X)$ satisfies the following properties:
$$(\phi^{-1})^{*}(g|_{\pi_{Y}^{-1}(U)\cap reg(X)})\cong dr\otimes dr+h_{U}+r^{2c}g_{L_{Y}}$$
 where $h_{U}$ is a riemannian metric defined over $U$, $c\in \mathbb{R}$ and $c>0$, $g_{L_{Y}}$ is a riemannian metric   on $reg(L_{Y})$, $dr\otimes dr+h_{U}+r^{2c_{Y}}g_{L_{Y}}$ is a riemannian metric of product type on $U\times reg(C(L_{Y}))$ and with $\cong$ we mean \textbf{quasi-isometric}. 
 \item If $p$ and $q$ lie in the same stratum $Y$ then in \eqref{yhnn} there is the same weight. We label it $c_{Y}$. 
\end{enumerate}
We call such kind of riemannian metric \textbf{ quasi edge metric with weights}.\\ To these we associate a \textbf{general perversity} $p_{g}$ in the sense of G. Friedman:
$$p_{g}(Y):= Y\longmapsto [[\frac{l_{Y}}{2}+\frac{1}{2c_{Y}}]]= \left\{
\begin{array}{lll}
0 & l_{Y}=0\\
\frac{l_{Y}}{2}+[[\frac{1}{2c_{Y}}]] & l_{Y}  even,\ l_{Y}\neq 0\\
\frac{l_{Y}-1}{2}+[[\frac{1}{2}+\frac{1}{2c_{Y}}]]& l_{Y}\ odd
\end{array}
\right.
$$
where $l_{Y}=dimL_{Y}$ and, given any real and positive number $x$, $[[x]]$ is the greatest integer strictly less than $x$.\\
The isomorphisms between the $L^{2}$ de Rham,  the Hodge and the intersection cohomology that we get  are then:
\begin{equation}
 I^{q_{g}}H^{i}(X,\mathcal{R}_{0})\cong H_{2,max}^{i}(reg(X),g)\cong \mathcal{H}_{abs}^{i}(reg(X),g)
\label{fff}
\end{equation}
\begin{equation}
I^{p_{g}}H^{i}(X, \mathcal{R}_{0})\cong H_{2,min}^{i}(reg(X),g)\cong \mathcal{H}_{rel}^{i}(reg(X),g)
\label{uuu}
\end{equation}
where $q_{g}$ is the complementary perversity of $p_{g}$, that is $q_{g}=t-p_{g}$ with $t$  the usual top perversity.  $\mathcal{R}_{0}$ is  the stratified coefficient system made  of  the pair of coefficient systems given by $(X-X_{n-1})\times \mathbb{R}$ over $X-X_{n-1}$ where the fibers $\mathbb{R}$ have the discrete topology  and the constant $0$ system on $X_{n-1}$. In particular, for all $i=0,...,n$ the groups $$H_{2,max}^{i}(reg(X),g),\ H_{2,min}^{i}(reg(X),g),\  \mathcal{H}_{abs}^{i}(reg(X)),\  \mathcal{H}_{rel}^{i}(reg(X))$$ are all finite dimensional. Note that in this paper we allow for  the existence of one codimensional strata; furthermore $p_{g}$ and $q_{g}$ are not  classical perversities in the sense of Goresky-MacPherson. This is why we have to replace the coefficient $\mathbb{R}$ with $\mathcal{R}_{0}$. It will be shown in corollary \ref{z} that if $p_{g}$ and $q_{g}$ are classical perversities in the sense of Goresky-MacPherson and $X_{n-1}=X_{n-2}$ then it is possible to replace $\mathcal{R}_{0}$ with $\mathbb{R}$. It is immediate to note that when $X$ is a manifold with edges with only one singular stratum this result reduces to the one proved by Hunsicker in \cite {H}.\\ Moreover we show that:
\begin{enumerate}
 \item if $p$ is a general perversity on $X$ in the sense of Friedman such that $p\geq \overline{m}$, where $\overline{m}$ is the upper middle perversity , and such that $p(Y)=0$ for each stratum with $cod(Y)=1$, then it is possible to construct on $reg(X)$ a quasi edge metric with weights $g$ such that \eqref{uuu} holds.
\item if $q$ is a general perversity on $X$ in the sense of Friedman such that $p\leq \underline{m}$, where $\underline{m}$ is the lower middle perversity , and such that $p(Y)=-1$ for each stratum with $cod(Y)=1$, then it is possible to construct on $reg(X)$ a quasi edge metric with weights $g$ such that \eqref{fff} holds.
 \end{enumerate}
 Finally we conclude the paper giving several corollaries about the properties of these $L^{2}$ de Rham and Hodge cohomology groups and about the properties of some  operators  associated to the metric $g$. We point out that these results can be used to study the perverse signatures of $X$, as it is shown in \cite{H} when the stratified pseudomanifold $X$ has only one singular stratum, see also \cite{CD}.

The paper is structured in the following way: in the first part we recall  notions which are fundamental to the whole work such as Hilbert complexes, intersection homology, intersection homology with general perversity, as defined by G. Friedman \cite{GP} and \cite{IGP} and stratified pseudomanifolds with a Thom-Mather stratification. We also introduce the riemannian metrics  which we will use for the rest of the paper and the general perversities associated to them. The second part contains some results  needed in order to calculate the maximal $L^{2}$ de Rham cohomology of a cone over a riemannian manifold endowed with a conic metric. The third part contains the calculation of the maximal $L^{2}$ de Rham cohomology of a cone over a riemannian manifold endowed with a conic metric with weights. Finally the last part contains the results that we have  announced above, their proofs and several corollaries. For the proof of the isomorphims \eqref{fff}, \eqref{uuu} in the last section we use a sheaf-theoretic point of view as is \cite{H}, \cite{HM} and \cite{MN}. More precisely to show the isomorphism \eqref{fff} we will construct a complex of fine sheaves  whose hypercohomology is the maximal $L^{2}$ de Rham cohomology and we will show that such complex satisfy the generalization given by Friedman of the theorem of Goresky and MacPherson in \cite{GMA}. Finally using some duality results we will get the isomorphisms \eqref{uuu}.  \vspace{1 cm}

\section{Background}

\subsection{Hilbert complexes}

In this first subsection we recall the notion of Hilbert complex  following \cite{HM}.

\begin{defi} A Hilbert complex is a complex, $(H_{*},D_{*})$ of the form:
\begin{equation}
0\rightarrow H_{0}\stackrel{D_{0}}{\rightarrow}H_{1}\stackrel{D_{1}}{\rightarrow}H_{2}\stackrel{D_{2}}{\rightarrow}...\stackrel{D_{n-1}}{\rightarrow}H_{n}\rightarrow 0,
\label{mm}
\end{equation}
where each $H_{i}$ is a separable Hilbert space and each map $D_{i}$ is a closed operator called the differential such that:
\begin{enumerate}
\item $\mathcal{D}(D_{i})$, the domain of $D_{i}$, is dense in $H_{i}$.
\item $ran(D_{i})\subset \mathcal{D}(D_{i+1})$.
\item $D_{i+1}\circ D_{i}=0$ for all $i$.
\end{enumerate}
\end{defi}
The cohomology groups of the complex are $H^{i}(H_{*},D_{*}):=Ker(D_{i})/ran(D_{i-1})$. If the groups $H^{i}(H_{*},D_{*})$ are all finite dimensional we say that it is a  $Fredholm\ complex$.

Given a Hilbert complex there is a dual Hilbert complex
\begin{equation}
0\leftarrow H_{0}\stackrel{D_{0}^{*}}{\leftarrow}H_{1}\stackrel{D_{1}^{*}}{\leftarrow}H_{2}\stackrel{D_{2}^{*}}{\leftarrow}...\stackrel{D_{n-1}^{*}}{\leftarrow}H_{n}\leftarrow 0,
\label{mmp}
\end{equation}
defined using $D_{i}^{*}:H_{i+1}\rightarrow H_{i}$, the Hilbert space adjoints of the differentials\\ $D_{i}:H_{i}\rightarrow H_{i+1}$. The cohomology groups of $(H_{j},(D_{j})^*)$, the dual Hilbert  complex, are $$H^{i}(H_{j},(D_{j})^*):=Ker(D_{n-i-1}^{*})/ran(D_{n-i}^*).$$\\For all $i$ there is also a laplacian $\Delta_{i}=D_{i}^{*}D_{i}+D_{i-1}D_{i-1}^{*}$ which is a self-adjoint operator on $H_{i}$ with domain $$\mathcal{D}(\Delta_{i})=\{v\in \mathcal{D}(D_{i})\cap \mathcal{D}(D_{i-1}^{*}): D_{i}v\in \mathcal{D}(D_{i}^{*}), D_{i-1}^{*}v\in \mathcal{D}(D_{i-1})\}$$ and nullspace: $$\mathcal{H}_{i}(H_{*},D_{*}):=ker(\Delta_{i})=Ker(D_{i})\cap Ker(D_{i-1}^{*}).$$

The following propositions are  standard results for these complexes. The first result is a weak Kodaira decomposition:

\begin{prop} [\cite{BL}, Lemma 2.1] Let $(H_{i},D_{i})$ be a Hilbert complex and $(H_{i},(D_{i})^{*})$ its dual complex, then: $$H_{i}=\mathcal{H}_{i}\oplus\overline{ran(D_{i-1})}\oplus\overline{ran(D_{i}^{*})}.$$
\label{kio}
\end{prop}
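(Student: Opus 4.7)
The plan is to derive the three-way orthogonal decomposition from two applications of the standard fact that for any closed, densely defined operator $T:H\to K$ between Hilbert spaces one has the orthogonal splitting $K = \overline{ran(T)} \oplus Ker(T^{*})$. Since each $D_{i}$ is closed and densely defined, and $(D_{i}^{*})^{*}=D_{i}$, I would apply this basic fact to $D_{i-1}:H_{i-1}\to H_{i}$ and to $D_{i}^{*}:H_{i+1}\to H_{i}$, producing the two orthogonal decompositions
$$H_{i} = \overline{ran(D_{i-1})} \oplus Ker(D_{i-1}^{*}) = \overline{ran(D_{i}^{*})} \oplus Ker(D_{i}).$$

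The second step is to exploit the complex condition $D_{i}\circ D_{i-1}=0$ (which is well posed thanks to $ran(D_{i-1})\subset \mathcal{D}(D_{i})$) to conclude that $ran(D_{i-1})\subset Ker(D_{i})$, and then, since $Ker(D_{i})$ is closed by closedness of $D_{i}$, that $\overline{ran(D_{i-1})}\subset Ker(D_{i})$. Inside $Ker(D_{i})$ I would split off $\overline{ran(D_{i-1})}$ by taking its orthogonal complement: its orthogonal complement in all of $H_{i}$ is $Ker(D_{i-1}^{*})$, hence its orthogonal complement inside the closed subspace $Ker(D_{i})$ is exactly $Ker(D_{i})\cap Ker(D_{i-1}^{*})=\mathcal{H}_{i}$. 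This gives the finer splitting $Ker(D_{i}) = \overline{ran(D_{i-1})} \oplus \mathcal{H}_{i}$, and substituting back into $H_{i}=\overline{ran(D_{i}^{*})}\oplus Ker(D_{i})$ yields the claimed decomposition.

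There is no genuine obstacle: the argument is a short diagram chase that rests entirely on the closed-range orthogonal decomposition for a closed densely defined operator. The only point worth a moment's care is the mutual orthogonality and closedness of the three summands, but this is automatic from the fact that both basic decompositions are orthogonal by construction, together with the inclusion $\overline{ran(D_{i-1})} \subset Ker(D_{i})$, which guarantees that $\overline{ran(D_{i-1})}$ is orthogonal to $\overline{ran(D_{i}^{*})}$ since the latter is, by the second decomposition, the orthogonal complement of $Ker(D_{i})$ in $H_{i}$.
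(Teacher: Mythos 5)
Your proof is correct, and it is the standard argument: the paper itself gives no proof of this proposition, citing it directly as Lemma 2.1 of Br\"uning--Lesch \cite{BL}, whose proof proceeds exactly as you do, via the orthogonal splitting $H_{i}=\overline{ran(T)}\oplus Ker(T^{*})$ applied to $D_{i-1}$ and to $D_{i}^{*}$ (using $(D_{i}^{*})^{*}=D_{i}$ for the closed, densely defined $D_{i}$), together with $\overline{ran(D_{i-1})}\subset Ker(D_{i})$ from the complex property. You also correctly handle the only delicate points, namely that $D_{i}^{*}$ is densely defined because $D_{i}$ is closed, and that the three summands are mutually orthogonal.
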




\begin{prop}[\cite{BL}, corollary 2.5] If the cohomology of a Hilbert complex $(H_{*}, D_{*})$ is finite dimensional then, for all $i$,  $ran(D_{i-1})$ is closed  and  $H^{i}(H_{*},D_{*})\cong \mathcal{H}^{i}(H_{*},D_{*}).$
\label{qwe}
\end{prop}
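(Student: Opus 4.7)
The plan is to extract both conclusions from the weak Kodaira decomposition of Proposition~\ref{kio} combined with a closed-range argument via the open mapping theorem. First I would sharpen the decomposition: since $D_i$ is closed, $\ker(D_i)$ is a closed subspace of $H_i$ containing $\overline{\mathrm{ran}(D_{i-1})}$ (because $D_iD_{i-1}=0$), and by the adjoint identity $\langle v,D_i^{*}u\rangle=\langle D_iv,u\rangle$ the subspace $\ker(D_i)$ is orthogonal to $\overline{\mathrm{ran}(D_i^{*})}$. Proposition~\ref{kio} therefore refines to
$$\ker(D_i)=\mathcal{H}_i\oplus\overline{\mathrm{ran}(D_{i-1})},$$
and since $\mathcal{H}_i\perp\mathrm{ran}(D_{i-1})$ (every $h\in\mathcal{H}_i$ satisfies $D_{i-1}^{*}h=0$), quotienting by $\mathrm{ran}(D_{i-1})$ produces the short exact sequence
$$0\longrightarrow\overline{\mathrm{ran}(D_{i-1})}/\mathrm{ran}(D_{i-1})\longrightarrow H^{i}(H_{*},D_{*})\longrightarrow \mathcal{H}_i\longrightarrow 0.$$

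If the cohomology is finite dimensional, then both $\mathcal{H}_i$ and the quotient $V:=\overline{\mathrm{ran}(D_{i-1})}/\mathrm{ran}(D_{i-1})$ are finite dimensional, and the proposition reduces to proving $V=0$: this simultaneously gives closedness of $\mathrm{ran}(D_{i-1})$ and identifies $H^{i}$ with the kernel $\mathcal{H}_i$ via the projection $\ker(D_i)\to\mathcal{H}_i$ along $\overline{\mathrm{ran}(D_{i-1})}$. To force $V=0$, I would equip $\mathcal{D}(D_{i-1})$ with the graph inner product, under which it is a Hilbert space (by closedness of $D_{i-1}$) and $D_{i-1}$ is bounded. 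Choosing representatives $v_1,\ldots,v_k\in\overline{\mathrm{ran}(D_{i-1})}$ of a basis of $V$, the map
$$S:\bigl(\mathcal{D}(D_{i-1})/\ker D_{i-1}\bigr)\oplus\mathbb{R}^{k}\longrightarrow\overline{\mathrm{ran}(D_{i-1})},\qquad ([x],\alpha)\longmapsto D_{i-1}x+\sum_{j}\alpha_j v_j,$$
is a bounded bijection between Hilbert spaces; the open mapping theorem gives a bounded inverse, and composition with the projection onto the $\mathbb{R}^{k}$ factor yields a continuous linear functional on $\overline{\mathrm{ran}(D_{i-1})}$ whose kernel is precisely $\mathrm{ran}(D_{i-1})$. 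This exhibits $\mathrm{ran}(D_{i-1})$ as closed and forces $V=0$.

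The step I expect to be most delicate is the verification that $S$ is genuinely bijective onto $\overline{\mathrm{ran}(D_{i-1})}$ with its ambient Hilbert topology rather than some weaker topology: injectivity requires that the $v_j$ be linearly independent modulo $\mathrm{ran}(D_{i-1})$ together with the injectivity of $D_{i-1}$ on $\mathcal{D}(D_{i-1})/\ker D_{i-1}$, while surjectivity uses the very definition of $V$. The essential bookkeeping is to keep the graph topology on the domain of $D_{i-1}$ cleanly distinguished from the topology inherited from $H_i$ on the range; once this is set up, the open mapping theorem dispatches the argument and the identification $H^{i}(H_{*},D_{*})\cong\mathcal{H}_i$ follows immediately from the short exact sequence above.
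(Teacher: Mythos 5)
Your proposal is correct, and it takes essentially the same route as the source: the paper itself gives no proof of this proposition, citing it as Corollary 2.5 of Br\"uning--Lesch \cite{BL}, and your argument --- refining the weak Kodaira decomposition of proposition \ref{kio} to $\ker(D_{i})=\mathcal{H}_{i}\oplus\overline{ran(D_{i-1})}$, extracting the short exact sequence, and then killing $\overline{ran(D_{i-1})}/ran(D_{i-1})$ by showing that the range of a closed operator with finite codimension in its closure is closed via the graph norm and the open mapping theorem --- is exactly the standard proof underlying that citation. All the delicate points you flagged (closedness of $\ker D_{i-1}$ in the graph topology, bijectivity of $S$ onto $\overline{ran(D_{i-1})}$ with its ambient topology, and density forcing $V=0$ once $ran(D_{i-1})$ is exhibited as the kernel of a continuous map into $\mathbb{R}^{k}$) check out as written.
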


\begin{prop}[\cite{BL}, corollary 2.6] A Hilbert complex  $(H_{j},D_{j}),\ j=0,...,n$ is a Fredholm complex if and only if  its dual complex, $(H_{j},D_{j}^{*})$, is Fredholm. If this is the case then
\begin{equation}
\mathcal{H}_{i}(H_{j},D_{j})\cong H_{i}(H_{j},D_{j})\cong H_{n-i}(H_{j},(D_{j})^{*})\cong \mathcal{H}_{n-i}(H_{j},(D_{j})^{*})
\end{equation}
\label{fred}
\end{prop}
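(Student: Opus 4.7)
The plan is to combine the weak Kodaira decomposition (Proposition \ref{kio}), the Hodge isomorphism for Fredholm complexes (Proposition \ref{qwe}), and the closed range theorem for closed densely defined operators between Hilbert spaces (which asserts that $\operatorname{ran}(D_j)$ is closed iff $\operatorname{ran}(D_j^*)$ is closed).

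First, I would check that the harmonic spaces at dual positions coincide. Unwinding the indexing: in the dual complex the differential going out of $H_{n-i}$ is $D_{n-i-1}^{*}$, while the incoming one is $D_{n-i}^{*}\colon H_{n-i+1}\to H_{n-i}$ whose Hilbert space adjoint is $D_{n-i}^{**}=D_{n-i}$, since each $D_j$ is closed. Therefore the harmonic space at level $n-i$ of the dual complex is
$$
\ker(D_{n-i-1}^{*})\cap\ker(D_{n-i-1}^{**})\big|_{\text{reindex}}=\ker(D_i)\cap\ker(D_{i-1}^{*})=\mathcal{H}_i(H_{*},D_{*}).
$$
So the extreme isomorphism $\mathcal{H}_i(H_{*},D_{*})\cong \mathcal{H}_{n-i}(H_{*},(D_{*})^{*})$ is automatic, without any Fredholm hypothesis.

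Next, suppose $(H_{*},D_{*})$ is Fredholm. By Proposition \ref{qwe}, each $\operatorname{ran}(D_{i-1})$ is closed and $H^i(H_{*},D_{*})\cong \mathcal{H}_i$ is finite dimensional. The closed range theorem then gives that each $\operatorname{ran}(D_{j}^{*})$ is closed as well. Applying the Kodaira decomposition of Proposition \ref{kio} to the dual complex, the cohomology $H^{n-i}(H_{*},(D_{*})^{*})$ is identified with the corresponding harmonic space, which by the previous paragraph equals $\mathcal{H}_i(H_{*},D_{*})$ and is therefore finite dimensional; hence the dual is Fredholm. The converse direction is symmetric, using $D_j^{**}=D_j$ to recover the original complex from its dual.

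Finally, chaining Proposition \ref{qwe} applied to both complexes with the harmonic-space identification of the first step gives the desired string of isomorphisms. The only step that goes beyond what is already quoted in the excerpt is the closed range theorem, and that is the sole real input to make the equivalence of Fredholmness between a complex and its dual work; the remainder is bookkeeping with adjoints and is the main place where index shifts must be handled with care.
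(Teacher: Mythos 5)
Your proof is correct, and it is essentially the argument behind the result as the paper uses it: the paper itself states this proposition without proof, citing \cite{BL} (Corollary 2.6), and the proof there runs along the same lines you propose --- the dual harmonic space at complementary degree is \emph{literally} the same subspace $\ker(D_i)\cap\ker(D_{i-1}^{*})=\mathcal{H}_i$ with no Fredholm hypothesis, the closed range theorem for closed densely defined operators transfers closedness of $\operatorname{ran}(D_j)$ to $\operatorname{ran}(D_j^{*})$, and proposition \ref{kio} together with proposition \ref{qwe} applied to both complexes yields the equivalence of Fredholmness and the chain of isomorphisms. The only blemish is a typo in your displayed formula, where $\ker(D_{n-i-1}^{**})$ should read $\ker(D_{n-i}^{**})=\ker(D_{n-i})$ (as written, the two kernels do not even lie in the same Hilbert space), but your surrounding prose makes clear this is what you meant.
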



The final result that we recall shows that is possible to compute these cohomology groups using a core subcomplex $$\mathcal{D}^{\infty}(H_{i})\subset H_{i}.$$ For all $i$ $\mathcal{D}^{\infty}(H_{i})$ consists of all elements $\eta$ that are in the domain of $\Delta_{i}^{l}$ for all $l\geq 0.$

\begin{prop}[\cite{BL}, Theorem 2.12] The complex $(\mathcal{D}^{\infty}(H_{i}),D_{i})$ is a subcomplex quasi-isomorphic to the complex $(H_{i},D_{i})$
\label{dff}
\end{prop}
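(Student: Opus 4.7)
The plan is to use the heat semigroup generated by the self-adjoint Laplacians $\Delta_i$ as a smoothing operator that lands in $\mathcal{D}^{\infty}(H_i)$, and then to build an explicit chain homotopy between this smoothing operator and the identity. A standard chain-homotopy argument will then force the inclusion $\iota\colon (\mathcal{D}^{\infty}(H_*),D_*)\hookrightarrow (H_*,D_*)$ to be a quasi-isomorphism.

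First I would check that $D_i$ really does preserve $\mathcal{D}^{\infty}$, so that we have a subcomplex at all. The key intertwining is $\Delta_{i+1}D_i=D_i\Delta_i$ on $\mathcal{D}(\Delta_i)$: since $D_{i+1}D_i=0$, both sides reduce to $D_iD_i^*D_i$. Because $\mathcal{D}(\Delta_i)\subset\mathcal{D}(D_i)$ by the definition recalled in the excerpt, iterating this identity shows that $\eta\in\mathcal{D}^{\infty}(H_i)$ implies $D_i\eta\in\mathcal{D}^{\infty}(H_{i+1})$. Then I would introduce the heat semigroup $T^{(i)}_t:=e^{-t\Delta_i}$ given by the spectral theorem. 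By the same intertwining, functional calculus yields $T^{(i+1)}_tD_i=D_iT^{(i)}_t$ on $\mathcal{D}(D_i)$ and, analogously, $T^{(i)}_tD_{i-1}^*=D_{i-1}^*T^{(i-1)}_t$. Moreover, for any $t>0$ one has $T^{(i)}_t H_i\subset\bigcap_{l\ge 0}\mathcal{D}(\Delta_i^l)=\mathcal{D}^{\infty}(H_i)$.

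Next I would differentiate and integrate the semigroup: $\frac{d}{ds}T^{(i)}_s=-\Delta_i T^{(i)}_s=-D_{i-1}D_{i-1}^*T^{(i)}_s-D_i^*D_iT^{(i)}_s$, and commuting $D_i$ and $D_{i-1}^*$ past $T_s$ as above yields, on $\mathcal{D}(D_i)$,
\begin{equation*}
I-T^{(i)}_t \;=\; D_{i-1}\!\int_{0}^{t}\!D_{i-1}^*T^{(i)}_s\,ds \;+\; \Bigl(\int_{0}^{t}\!D_i^*T^{(i+1)}_s\,ds\Bigr)D_i .
\end{equation*}
Defining $K_t$ in degree $i$ by $K_t\eta:=\int_{0}^{t}D_{i-1}^*T^{(i)}_s\eta\,ds$ produces a genuine chain homotopy $DK_t+K_tD=I-T_t$. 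Applied to a closed $\eta\in H_i$, this gives $\eta-T_t\eta=DK_t\eta$, so $[T_t\eta]=[\eta]$ in $H^i(H_*,D_*)$ and $T_t\eta\in\mathcal{D}^{\infty}$; this proves surjectivity of $\iota_*$. For injectivity, if $\eta\in\mathcal{D}^{\infty}\cap\ker D_i$ satisfies $\eta=D_{i-1}\xi$ with $\xi\in\mathcal{D}(D_{i-1})$, then $\eta=T_t\eta+DK_t\eta=D_{i-1}(T_t\xi+K_t\eta)$, and I would check that $T_t\xi+K_t\eta$ lies in $\mathcal{D}^{\infty}(H_{i-1})$, exhibiting $\eta$ as a coboundary in the subcomplex.

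The main technical obstacle will be the bookkeeping behind the intertwinings on the correct domains: justifying $T_tD=DT_t$ and $T_tD^*=D^*T_t$ on the full operator domains (rather than just on spectral subspaces), justifying the differentiation under the integral sign in the formula for $K_t$, and confirming that $K_t$ actually preserves $\mathcal{D}^{\infty}$. The last point, which is the crux of injectivity, I would handle by passing $\Delta_i^l$ inside the integral using the commutations $\Delta^l D^*=D^*\Delta^l$ and $\Delta^l T_s=T_s\Delta^l$, reducing the estimate $\sup_l\|\Delta^lK_t\eta\|<\infty$ to the a priori boundedness of $T_sD^*\Delta^l\eta$ on $[0,t]$ for $\eta\in\mathcal{D}^{\infty}$. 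Once these routine but delicate verifications are in place, the quasi-isomorphism follows formally.
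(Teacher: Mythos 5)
The paper offers no argument for this proposition beyond the citation of \cite{BL}, and your heat-semigroup proof is precisely the standard mechanism behind Theorem 2.12 there (smoothing by $e^{-t\Delta_i}$, which maps into $\mathcal{D}^{\infty}$ and is chain-homotopic to the identity via $K_{t}=\int_{0}^{t}D^{*}e^{-s\Delta}\,ds$ with $DK_{t}+K_{t}D=I-e^{-t\Delta}$), so your proposal is correct and takes essentially the same route as the cited proof. The one inaccuracy is the claim that $\Delta_{i+1}D_{i}=D_{i}\Delta_{i}$ holds on all of $\mathcal{D}(\Delta_{i})$: for $v\in\mathcal{D}(\Delta_{i})$ neither side need even be defined, since nothing guarantees $D_{i}^{*}D_{i}v\in\mathcal{D}(D_{i})$ or $\Delta_{i}v\in\mathcal{D}(D_{i})$; the identity does hold for $v\in\mathcal{D}(\Delta_{i}^{l+1})$ (there $D_{i}^{*}D_{i}v=\Delta_{i}v-D_{i-1}D_{i-1}^{*}v\in\mathcal{D}(D_{i})$), giving $\Delta_{i+1}^{l}D_{i}v=D_{i}\Delta_{i}^{l}v$ by induction, which is exactly what your iteration on $\mathcal{D}^{\infty}$ requires, so the argument stands.
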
 

The main case of interest here is when $(M,g)$ is a (not necessarily complete) riemannian manifolds, $H_{i}=L^{2}\Omega^{i}(M,g)$, and $D_{i}$ is the exterior derivative operator.\\ Consider the de Rham complex $(C_{0}^{\infty}\Omega^{*}(M),d_{*})$ where each form $\omega\in C_{0}^{\infty}\Omega^{i}(M)$ is a $i-$form with compact support. To turn this complex into a Hilbert complex we must specify a closed extension of $d$. With the two following propositions we will recall the two canonical closed extensions of $d$

\begin{defi} The maximal extension $d_{max}$; this is the operator acting on the domain:
\begin{equation} 
\mathcal{D}(d_{max,i})=\{\omega\in L^{2}\Omega^{i}(M,g): \exists\ \eta\in L^{2}\Omega^{i+1}(M,g)
\end{equation}

$$ s.t.\ <\omega,\delta_{i}\zeta>_{L^{2}(M,g)}=<\eta,\zeta>_{L^{2}(M,g)}\ \forall\ \zeta\in C_{0}^{\infty}\Omega^{i+1}(M) \}$$

In this case $d_{max,i}\omega=\eta.$ In  other words $\mathcal{D}(d_{max,i})$ is the largest set of forms $\omega\in L^{2}\Omega^{i}(M,g)$ such that $d_{i}\omega$, computed distributionally, is also in $L^{2}\Omega^{i+1}(M,g).$
\end{defi}

\begin{defi} The minimal extension $d_{min,i}$; this is given by the graph closure of $d_{i}$ on $C_{0}^{\infty}\Omega^{i}(M)$ respect to the norm of $L^{2}\Omega^{i}(M,g)$, that is,
\begin{equation} \mathcal{D}(d_{min,i})=\{\omega\in L^{2}\Omega^{i}(M,g): \exists\ \{\omega_{j}\}_{j\in J}\subset C^{\infty}_{0}\Omega^{i}(M,g),\ \omega_{j}\rightarrow \omega ,\       d_{i}\omega_{j}\rightarrow \eta\in L^{2}\Omega^{i+1}(M,g)\} 
\end{equation}
and in this case $d_{min,i}\omega=\eta$
\end{defi}

Obviously $\mathcal{D}(d_{min,i})\subset \mathcal{D}(d_{max,i})$. Furthermore, from these definitions, it follows immediately that $$d_{min,i}(\mathcal{D}(d_{min,i}))\subset \mathcal{D}(d_{min,i+1}),\ d_{min,i+1}\circ d_{min,i}=0$$ and that $$d_{max,i}(\mathcal{D}(d_{max,i}))\subset \mathcal{D}(d_{max,i+1}),\ d_{max,i+1}\circ d_{max,i}=0.$$ \\Therefore $(L^{2}\Omega^{*}(M,g),d_{max/min,*})$ are both Hilbert complexes and their cohomology groups are denoted by $H_{2,max/min}^{*}(M,g)$.

Another straightforward but important fact is that the Hilbert complex adjoint of \\$(L^{2}\Omega^{*}(M,g),d_{max/min,*})$ is $(L^{2}\Omega^{*}(M,g),\delta_{min/max,*})$ with $\delta_{*}$ the formal adjoint of $d_{*}$, that is
\begin{equation}
(d_{max,i})^{*}=\delta_{min,i},\     (d_{min,i})^{*}=\delta_{max,i}.
\end{equation}

Using  proposition \ref{kio}  we obtain two weak Kodaira decompositions:
\begin{equation}
L^{2}\Omega^{i}(M,g)=\mathcal{H}^{i}_{abs/rel}\oplus \overline{ran(d_{max/min,i-1})}\oplus \overline{ran(\delta_{min/max,i})}
\label{kd}
\end{equation}
with summands mutually orthogonal in each case. The first summand in the right, called the absolute or relative  Hodge cohomology, respectively, is defined as the orthogonal complement of the other two summands. Since $(ran(d_{max,i-1}))^{\bot}=Ker(\delta_{min,i-1})$ and $(ran(d_{min,i-1}))^{\bot}=Ker(\delta_{max,i-1})$, we see that 
\begin{equation}
\mathcal{H}^{i}_{abs/rel}=Ker(d_{max/min,i})\cap Ker(\delta_{min/max,i-1}).
\end{equation}

Now consider the following operators:
\begin{equation}
\Delta_{abs,i}=\delta_{min,i}d_{max,i}+d_{max,i-1}\delta_{min,i-1},\     \Delta_{rel,i}=\delta_{max,i}d_{min,i}+d_{min,i-1}\delta_{max,i-1}
\end{equation}
These are selfadjoint and satisfy:
\begin{equation}
\mathcal{H}^{i}_{abs}(M,g)=Ker(\Delta_{abs,i}),\ \mathcal{H}^{i}_{rel}(M,g)=Ker(\Delta_{rel,i})
\end{equation} and
\begin{equation}
\overline{ran(\Delta_{abs,i})}=\overline{ran(d_{max,i-1})}\oplus \overline{ran(\delta_{min,i})},\ \overline{ran(\Delta_{rel,i})}=\overline{ ran(d_{min,i-1})}\oplus \overline{ran(\delta_{max,i})}.
\end{equation}
Furthermore, by proposition \ref{qwe}, if $H^{i}_{2,max/min}(M,g)$ is finite dimensional then the range of $d_{max/min,i-1}$ is closed and $\mathcal{H}^{i}_{abs/rel}(M,g)\cong H_{2,max/min}^{i}(M,g)$. On $L^{2}\Omega^{i}(M,g)$ we have also a third weak Koidara decomposition which is the original one considered by Kodaira in \cite{KK}.
\begin{equation}
L^{2}\Omega^{i}(M,g)=\mathcal{H}^{i}_{max}\oplus \overline{ran(d_{min,i-1})}\oplus \overline{ran(\delta_{min,i})}
\label{oiko}
\end{equation}
where $\mathcal{H}^{i}_{max}(M,g)$ is $ Ker(d_{max,i})\cap Ker(\delta_{max,i-1})$ and it is called the $i-th$ maximal Hodge cohomology group.\\ We can also consider the following operators: 
\begin{equation}
\Delta_{max,i}:L^{2}\Omega^{i}(M,g)\rightarrow L^{2}\Omega^{i}(M,g),\ \Delta_{min,i}:L^{2}\Omega^{i}(M,g)\rightarrow L^{2}\Omega^{i}(M,g).
\label{jiu}
\end{equation}
$\Delta_{max,i}$ is defined as the maximal closure of $\delta_{i}\circ d_{i}+d_{i-1}\circ \delta_{i-1}:C^{\infty}_{c}\Omega^{i}(M)\rightarrow C^{\infty}_{c}\Omega^{i}(M)$ that is $u\in \mathcal{D}(\Delta_{max,i})$ and $v=\Delta_{max,i}(u)$ if $$<u,\delta_{i}( d_{i}(\phi))+d_{i-1}( \delta_{i-1}(\phi))>_{L^{2}(M,g)}=<v,\phi>_{L^{2}(M,g)}\ \text{for each } \phi \in C^{\infty}_{c}\Omega^{i}(M).$$ $\Delta_{min,i}$ is the minimal closure of $\delta_{i}\circ d_{i}+d_{i-1}\circ \delta_{i-1}:C^{\infty}_{c}\Omega^{i}(M)\rightarrow C^{\infty}_{c}\Omega^{i}(M)$ that is $u\in \mathcal{D}(\Delta_{min,i})$ and $v=\Delta_{min,i}(u)$ if there is a sequence $\{\phi\}_{i\in \mathbb{N}}\subset C^{\infty}_{c}\Omega^{i}(M)$ such that $$\phi_{i}\rightarrow u\ \text{in}\  L^{2}\Omega^{i}(M,g)\ \text{and}\ \delta_{i}( d_{i}(\phi))+d_{i-1}( \delta_{i-1}(\phi))\rightarrow u\ \text{in}\ L^{2}\Omega^{i}(M,g).$$
\begin{prop} The operators $\Delta_{max,i},\ \Delta_{min,i}$ satisfy the following properties:
\label{olpo}
\begin{enumerate}
 \item $(\Delta_{max,i})^{*}=\Delta_{min,i}, (\Delta_{min,i})^{*}=\Delta_{max,i}.$
\item $Ker(\Delta_{min,i})=Ker(d_{min,i})\cap Ker(\delta_{min,i-1})$. We call it the $i-th$ minimal Hodge cohomology group and we label it $\mathcal{H}^{i}_{min}(M,g).$
\item $Ker(\Delta_{max,i})=Ker(d_{max,i})\cap Ker(\delta_{max,i-1})=\mathcal{H}^{i}_{max}(M,g)$.
\item $\overline{ran(\Delta_{min,i})}=\overline{ran(d_{min,i-1})}\oplus \overline{ran(\delta_{min,i})}.$
\item $\overline{ran(\Delta_{max,i})}=\overline{ran(d_{max,i-1})+ran(\delta_{max,i})}.$
\end{enumerate}
\end{prop}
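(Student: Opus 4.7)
My overall strategy exploits the formal self-adjointness of $A = \delta d + d\delta$ on $C_c^\infty\Omega^i(M)$ and reduces Parts 2--5 pairwise by duality. First I would establish Part 1 via standard adjoint theory: since $d$ and $\delta$ are formal adjoints under integration by parts on compactly supported forms, $A$ is formally self-adjoint on $C_c^\infty\Omega^i(M)$; for any such symmetric, densely-defined operator the graph closure and the distributional extension are mutually adjoint, giving $(\Delta_{max,i})^{*}=\Delta_{min,i}$ and $(\Delta_{min,i})^{*}=\Delta_{max,i}$ at once.

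Next I would observe the two dualities: applying $(\overline{\mathrm{ran}\,T})^{\perp}=\mathrm{Ker}(T^{*})$ to $T=\Delta_{max,i}$ (resp.\ $T=\Delta_{min,i}$) and combining with Part 1 shows that Part 2 is equivalent to Part 5 and Part 3 to Part 4. Moreover, for $\alpha,\beta\in C_c^\infty$ one has $\langle d\alpha,\delta\beta\rangle=\langle d^{2}\alpha,\beta\rangle=0$, and by closure $\overline{\mathrm{ran}(d_{min,i-1})}\perp\overline{\mathrm{ran}(\delta_{min,i})}$; this turns the topological sum in Part 4 into an orthogonal direct sum. Hence it suffices to establish Parts 2 and 3.

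For Part 2, the inclusion $\subseteq$ is straightforward: given $u\in\mathrm{Ker}(\Delta_{min,i})$, pick $\phi_n\in C_c^\infty$ with $\phi_n\to u$ and $\Delta\phi_n\to 0$ in $L^2$; the identity $\|d\phi_n\|^2+\|\delta\phi_n\|^2=\langle\Delta\phi_n,\phi_n\rangle$ valid on $C_c^\infty$, together with Cauchy--Schwarz, forces $d\phi_n\to 0$ and $\delta\phi_n\to 0$, so $u\in\mathrm{Ker}(d_{min,i})\cap\mathrm{Ker}(\delta_{min,i-1})$. The reverse inclusion I would derive from the equivalent statement in Part 5: for $\phi\in C_c^\infty$ one has $\Delta\phi=d(\delta\phi)+\delta(d\phi)$ with $d\phi,\delta\phi\in C_c^\infty$, giving $\overline{\mathrm{ran}(\Delta_{max,i})}\subseteq\overline{\mathrm{ran}(d_{max,i-1})+\mathrm{ran}(\delta_{max,i})}$, while the reverse containment follows by approximating $d_{max}\alpha$ and $\delta_{max}\beta$ in $L^2$ by $\Delta$ applied to suitable forms.

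Part 3 has the easy direction $\supseteq$ by direct computation. The reverse inclusion $\mathrm{Ker}(\Delta_{max,i})\subseteq\mathrm{Ker}(d_{max,i})\cap\mathrm{Ker}(\delta_{max,i-1})$ is the \textbf{main obstacle}: elliptic regularity for $\Delta$ gives that a distributionally harmonic $u\in L^{2}$ is smooth, but one must then produce global $L^2$ bounds on $du$ and $\delta u$. The plan is a cutoff argument: choose a sequence $\chi_n$ of compactly supported smooth functions increasing to $1$ and use
\[
\int_M \chi_n^{2}(|du|^{2}+|\delta u|^{2}) = \int_M \chi_n^{2}\langle\Delta u,u\rangle + R_n,
\]
where $R_n$ collects commutator terms in $d\chi_n$; controlling $R_n$ via the choice of $\chi_n$ lets one pass to the limit $n\to\infty$. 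Once Part 3 is in hand, Part 4 follows from the duality already noted.
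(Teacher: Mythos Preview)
Your overall architecture---Part~1 from formal self-adjointness, then the duality pairings $2\leftrightarrow 5$ and $3\leftrightarrow 4$ via $(\overline{\mathrm{ran}\,T})^{\perp}=\mathrm{Ker}(T^{*})$---is exactly the paper's. For $\mathrm{Ker}(\Delta_{min,i})\subseteq\mathcal{H}^i_{min}$ you use the identity $\|d\phi\|^2+\|\delta\phi\|^2=\langle\Delta\phi,\phi\rangle$ on $C_c^\infty$, while the paper introduces an auxiliary symmetric operator $\Delta_{m,i}:=d_{max}\delta_{min}+\delta_{max}d_{min}$ extending $\Delta_{min,i}$ and reads off the kernel inclusion; both routes work. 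One caution on your Part~5\,$\subseteq$: the observation ``$\Delta\phi=d\delta\phi+\delta d\phi$ for $\phi\in C_c^\infty$'' only gives $\mathrm{ran}(\Delta_{min})\subseteq\overline{\mathrm{ran}(d_{max})+\mathrm{ran}(\delta_{max})}$, not $\mathrm{ran}(\Delta_{max})$; the paper asserts the stronger inclusion without further comment, and your sketch ``approximate $d_{max}\alpha$ by $\Delta$ of suitable forms'' for the reverse is not a real argument.

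The genuine gap is Part~3. You rightly flag $\mathrm{Ker}(\Delta_{max,i})\subseteq\mathcal{H}^i_{max}$ as the obstacle, but the cutoff argument cannot close it: on a general incomplete $(M,g)$ there is no sequence $\chi_n\nearrow 1$ with $\sup|d\chi_n|\to 0$, so the commutator $R_n$ need not vanish. In fact the inclusion is \emph{false} in this generality. On $M=(0,1)$ with the Euclidean metric, $u(x)=x$ satisfies $u\in L^2$ and $u''=0$, hence $u\in\mathrm{Ker}(\Delta_{max,0})$, yet $d_{max,0}u=dx\neq 0$; thus $\dim\mathrm{Ker}(\Delta_{max,0})=2$ while $\dim\mathcal{H}^0_{max}=1$, and Part~4 fails correspondingly. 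The paper's own proof of Part~3 runs into the same wall: both of its arguments (via the auxiliary operator $\Delta_{M,i}:=d_{min}\delta_{max}+\delta_{min}d_{max}$, and via duality with $\mathrm{ran}(\Delta_{min})$) establish only the direction $\supseteq$, so the difficulty you sensed is an actual error in the stated proposition rather than a gap in your strategy. Fortunately only Parts~1, 2, 5 are invoked later in the paper.
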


\begin{proof} The first property is immediate. For the second property consider the following operator: $d_{max,i-1}\circ \delta_{min,i-1}+\delta_{max,i}\circ d_{min,i}:L^{2}\Omega^{i}(M,g)\rightarrow L^{2}\Omega^{i}(M,g)$. We label it $\Delta_{m,i}$. This is a symmetric operator and it is clear that $\Delta_{m,i}$ extends $\Delta_{min,i}$ that is $\mathcal{D}(\Delta_{min,i})\subset \mathcal{D}(\Delta_{m,i})$ and $\Delta_{min,i}(u)=\Delta_{m,i}(u)$ for each $u\in \Delta_{min,i}$. From this it follows that $Ker(\Delta_{min,i})\subset \mathcal{H}^{i}_{min}(M,g)$ because $Ker(\Delta_{min,i})\subset Ker(\Delta_{m,i})$ and $Ker(\Delta_{m,i})=\mathcal{H}^{i}_{min}(M,g)$. By the fact that $ran(\Delta_{max,i})\subset \overline{ran(d_{max,i-1})+ran(\delta_{max,i})}$ and by the first property it follows that $Ker(\Delta_{min,i})=(ran(\Delta_{max,i}))^{\bot}\supset (\overline{ran(d_{max,i-1})+ran(\delta_{max,i})})^{\bot}=\mathcal{H}^{i}_{min}(M,g)$. Therefore $Ker(\Delta_{min,i})=\mathcal{H}^{i}_{min}(M,g)$.\\ For the third property consider the following operator: $d_{min,i-1}\circ \delta_{max,i-1}+\delta_{min,i}\circ d_{max,i}:L^{2}\Omega^{i}(M,g)\rightarrow L^{2}\Omega^{i}(M,g)$. We label it $\Delta_{M,i}$. Also $\Delta_{M,i}$ is a symmetric operator and it is clear that $\Delta_{max,i}$ extends $\Delta_{M,i}$. Therefore  $Ker(\Delta_{max,i})\supset \mathcal{H}^{i}_{max}(M,g)$ because $Ker(\Delta_{max,i})\supset Ker(\Delta_{M,i})$ and $Ker(\Delta_{M,i})=\mathcal{H}^{i}_{max}(M,g)$.\\ Now by the fact that $ran(\Delta_{min,i})\subset \overline{ran(d_{min,i-1})+ran(\delta_{min,i})}$ and by the first property it follows that $Ker(\Delta_{max,i})=(ran(\Delta_{min,i}))^{\bot}\supset (\overline{ran(d_{min,i-1})+ran(\delta_{min,i})})^{\bot}=\mathcal{H}^{i}_{max}(M,g)$. In this way we can conclude that $Ker(\Delta_{max,i})=\mathcal{H}^{i}_{max}(M,g)$.\\ For the fourth property we can observe that $\overline{ran(\Delta_{min,i})}\subset \overline{ran(D_{m,i})}\subset \overline{ran(d_{min,i-1})}\oplus \overline{ran(\delta_{min,i})}$. But, by the third point, $(\overline{ran(d_{min,i-1})}\oplus \overline{ran(\delta_{min,i})})^{\bot}=Ker(\Delta_{max,i})$ and $(Ker(\Delta_{max,i}))^{\bot}=\overline{ran(\Delta_{min,i})}$; therefore the fourth point is proved.\\For the fifth property we can observe that $\overline{ran(\Delta_{max,i})}\subset \overline{ran(d_{max,i-1})+ran(\delta_{max,i})}$. But, by the second  point, $(\overline{ran(d_{max,i-1})+ran(\delta_{max,i})})^{\bot}=Ker(\Delta_{min,i})$ and $(Ker(\Delta_{min,i}))^{\bot}=\overline{ran(\Delta_{max,i})}$ and therefore the fifth point is proved.    
\end{proof}
Finally we conclude the section by stating a result that is a particular case of proposition \ref{dff}.

\begin{prop}[\cite{BL}, pag 110, \cite{C} appendix] Consider the smooth differential forms $\Omega^{*}(M)$ and the following complex:
\begin{equation}
(\Omega^{*}_{2}(M,g),d_{*}):= 0\rightarrow \Omega_{2}^{0}(M,g)\stackrel{d_{0}}{\rightarrow}\Omega_{2}^{1}(M,g)\stackrel{d_{1}}{\rightarrow}...\stackrel{d_{n-1}}{\rightarrow}\Omega_{2}^{n}(M,g)\stackrel{d_{n}}{\rightarrow}0
\label{fei}
\end{equation}
where $\Omega^{i}_{2}(M,g)=\{\omega\in \Omega^{i}(M): \|\omega\|_{L^{2}(M,g)}<\infty\ and\ \|d_{i}\omega\|_{L^{2}(M,g)}<\infty\}$.\\Then $(\Omega^{*}_{2}(M,g),d_{*})$ is a subcomplex quasi-isomorphic to the complex $(L^{2}\Omega^{*}(M,g),d_{max,*})$
\label{zaq}
\end{prop}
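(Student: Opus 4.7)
The plan is to sandwich $(\Omega^*_2(M,g), d_*)$ between the smooth core $\mathcal{D}^{\infty}(L^2\Omega^*(M,g))$ provided by Proposition \ref{dff} and the maximal Hilbert complex $(L^2\Omega^*(M,g), d_{max,*})$, and then upgrade the quasi-isomorphism of the core to one for $\Omega^*_2$ using a heat-semigroup chain homotopy. First I would verify that $(\Omega^*_2, d_*)$ is actually a subcomplex of the maximal one: if $\omega \in \Omega^i(M)$ satisfies $\omega, d\omega \in L^2$, the Stokes-type identity $\langle \omega, \delta \zeta\rangle_{L^2} = \langle d\omega, \zeta\rangle_{L^2}$ for every test form $\zeta \in C_0^\infty\Omega^{i+1}(M)$ immediately puts $\omega \in \mathcal{D}(d_{max,i})$ with $d_{max,i}\omega = d\omega$, and the space is clearly closed under $d$. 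Next, applying Proposition \ref{dff} to the Hilbert complex $(L^2\Omega^*, d_{max,*})$, whose Laplacian is $\Delta_{abs,i}$, furnishes the smooth core $\mathcal{D}^\infty(L^2\Omega^i) = \bigcap_{l\geq 0}\mathcal{D}(\Delta_{abs,i}^l)$, quasi-isomorphic to the full complex. Interior elliptic regularity, applied iteratively and using that $\Delta_{abs}$ coincides with the standard Hodge Laplacian on smooth compactly supported forms, shows that every element of $\mathcal{D}^\infty$ is smooth on $M$; being already in $\mathcal{D}(d_{max})$, it belongs to $\Omega^*_2$. We thus have inclusions of subcomplexes
\[
\mathcal{D}^\infty(L^2\Omega^*(M,g)) \hookrightarrow (\Omega^*_2(M,g), d_*) \hookrightarrow (L^2\Omega^*(M,g), d_{max,*})
\]
whose composite is a quasi-isomorphism.

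The remaining step is to show that the left inclusion $\mathcal{D}^\infty \hookrightarrow \Omega^*_2$ is itself a quasi-isomorphism; by the two-out-of-three property this will force the right inclusion to be one too, which is exactly the claim of the proposition. For this I would use the heat semigroup $\{e^{-t\Delta_{abs}}\}_{t\geq 0}$, well-defined by the spectral theorem since $\Delta_{abs}$ is self-adjoint and non-negative. Setting $u_t := \int_0^t e^{-s\Delta_{abs}}\omega\, ds$ and $K_t\omega := \delta_{min} u_t$, the identity $\Delta_{abs} u_t = \omega - e^{-t\Delta_{abs}}\omega$ combined with the splitting $\Delta_{abs,i} = d_{max,i-1}\delta_{min,i-1}+\delta_{min,i}d_{max,i}$ (and the commutation of $d_{max}$ with $e^{-s\Delta_{abs}}$) yields the chain homotopy identity
\[
d\, K_t + K_t\, d = \mathrm{id} - e^{-t\Delta_{abs}} \quad \text{on } \Omega^*_2(M,g).
\]
A standard energy estimate $\|K_t\omega\|_{L^2}^2 \leq \langle \Delta_{abs} u_t, u_t\rangle \leq 2t\|\omega\|_{L^2}^2$, together with interior parabolic regularity for $(\partial_t + \Delta_{abs})u = 0$, shows that $K_t$ maps $\Omega^i_2$ into $\Omega^{i-1}_2$. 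Since $e^{-t\Delta_{abs}}$ sends $L^2\Omega^i$ into $\mathcal{D}^\infty$ for $t>0$, the homotopy identity implies that every cohomology class in $H^*(\Omega^*_2)$ is represented by an element of $\mathcal{D}^\infty$; hence $H^*(\mathcal{D}^\infty) \twoheadrightarrow H^*(\Omega^*_2)$, and combined with the quasi-isomorphism of the composite this gives $H^*(\mathcal{D}^\infty) \cong H^*(\Omega^*_2) \cong H^*_{2,max}(M,g)$.

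The hardest step will be a clean justification of the heat-semigroup regularization on the possibly incomplete manifold $(M,g)$: although the functional calculus gives $e^{-t\Delta_{abs}}$ formally, one must verify that it smoothens $L^2$ data in the interior, intertwines $d_{max}$ across different degrees (so that $d \, e^{-t\Delta_{abs,i-1}}\eta = e^{-t\Delta_{abs,i}}\, d_{max}\eta$), and that the integrated operator $u_t$ lies in the form domain $\mathcal{D}(d_{max})\cap\mathcal{D}(\delta_{min})$ so that $K_t\omega$ is well defined with the claimed bounds. These are the standard analytic ingredients underlying the proofs in \cite{BL} and \cite{C}, and once they are in place the chain-homotopy argument closes the proof immediately.
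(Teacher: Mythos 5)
Your proposal is correct and takes essentially the same route as the paper, which proves Proposition \ref{zaq} simply by declaring it a particular case of Proposition \ref{dff} (citing Br\"uning--Lesch and Cheeger's appendix): your sandwich $\mathcal{D}^{\infty}\subset \Omega^{*}_{2}\subset L^{2}\Omega^{*}$ with $\Delta_{abs}$ as the complex Laplacian, plus the spectral-calculus homotopy $d\,K_{t}+K_{t}\,d=\mathrm{id}-e^{-t\Delta_{abs}}$ with $K_{t}=\delta_{min}\int_{0}^{t}e^{-s\Delta_{abs}}\,ds$, is precisely the standard mechanism behind those citations. You also correctly supply the one step the sandwich alone does not yield (surjectivity of $H^{*}(\mathcal{D}^{\infty})\rightarrow H^{*}(\Omega^{*}_{2})$, needed since a composite quasi-isomorphism does not force the intermediate inclusions to be quasi-isomorphisms), so nothing is missing.
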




\subsection{Stratified pseudomanifolds and intersection homology}

We begin by recalling the concept of stratified pseudomanifold. The definition is given by induction on the dimension.
\begin{defi} A $0-$dimensional stratified space is a countable set with the discrete topology. For $m>0$ a $m-$dimensional topologically stratified space is paracompact Hausdorff topological space  $X$  equipped with a filtration
\begin{equation} X=X_{m}\supset X_{m-1}\supset...\supset X_{1}\supset X_{0}
\label{strati}
\end{equation}
of $X$ by  closed subsets $X_{j}$ such that if $x\in X_{j}-X_{j-1}$ there exists a neighbourhood $N_{x}$ of $x$ in $X$, a compact $(m-j-1)-$dimensional topologically stratified space $L$ with a filtration 
\begin{equation}
L=L_{m-j-1}\supset...\supset L_{1}\supset L_{0}
\label{stratii}
\end{equation}
and a homeomorphism
\begin{equation} \phi:N_{x}\rightarrow \mathbb{R}^{j}\times C(L)
\label{stratiii}
\end{equation}
where $C(L)=L\times [0,1)/L\times \{0\}$ is the open cone on $L$, such that $\phi$ takes $N_{x}\cap X_{j+i+1}$ homeomorphically onto
\begin{equation} \mathbb{R}^{j}\times C(L_{i})\subset \mathbb{R}^{j}\times C(L)
\label{stratum}
\end{equation}
for $m-j-1\geq i\geq 0$ and $\phi$ takes $N_{x}\cap X_{j}$ homeomorphically onto  
\begin{equation}\mathbb{R}^{j}\times \{vertex\ of\ C(L)\}
\label{loi}
\end{equation}
\label{wert}
\end{defi}

This definition guaranties that, for each $j$, the subset $X_{j}-X_{j-1}$ is a topological manifold of dimension $j$. The \textbf{strata} of $X$ are the connected components of these manifolds. If a stratum $Y$ is a subset of $X-X_{n-1}$ it is called a \textbf{regular stratum}; otherwise it is called a \textbf{singular stratum}.  
The space L  is referred as to the \textbf{link} of the stratum. In general it is not uniquely determined up to homeomorphism, though if $X$ is a stratified pseudomanifold it is unique up to stratum preserving homotopy equivalence (see\cite {IGP} pag 108).
\begin{defi} A topological pseudomanifold of dimension $m$ is a paracompact Hausdorff topological space $X$ which posses a topological stratification such that
\begin{equation} X_{m-1}=X_{m-2}
\label{asw}
\end{equation}
and $X-X_{m-2}$ is dense in $X$.(For more details see \cite{BA} or \cite{KW}).
\end{defi}

Over these spaces, at the end of the seventies,  Mark Goresky  and Robert MacPherson have defined a new homological theory known as intersection homology. Here we recall briefly the main definitions and we refer to \cite{BA}, \cite{B}, \cite{GM}, \cite{GMA} and \cite{KW} for a complete development of the theory.
\begin{defi} A perversity is a function $p:\{2,3,4,...,n\}\rightarrow \mathbb{N}$ such that
\begin{equation} p(2)=0\ and\ p(i)\leq p(i+1)\leq p(i)+1.
\end{equation}
\label{per}
\end{defi}

Let $\Delta_{i}\subset \mathbb{R}^{i+1}$ the standard $i-$simplex. The $j-$\textbf{skeleton} are of $\Delta_{i}$ is the set of $j-$subsimplices. We say a singular $i-$simplex in $X$, i.e. a continuous map $\sigma:\Delta_{i}\rightarrow X$, is $p-$\textbf{allowable} if
\begin{equation} \sigma^{-1}(X_{m-k}-X_{m-k-1})\subset \{(i-k+p(k))-skeleton\ of\ \Delta_{i}\}\ for\ all\ k\geq 2.
\end{equation}
The elements of the space $I^{p}S_{i}(X)$ are the finite linear combinations of singular $i-$simplex $\sigma:\Delta_{i}\rightarrow X$ such that $\sigma$ and $\partial \sigma$ are $p-$allowable. Clearly $(I^{p}S_{i}(X), \partial_{i})$ is a complex, more precisely a subcomplex of $( S_{i}(X), \partial_{i})$, and the \textbf{perversity p singular intersection homology groups}, $I^{p}H_{i}(X)$, are the homology groups of this complex.

\begin{rem} The above definition is not the original definition given by Goresky and MacPherson in \cite{GM}. In fact in their paper Goresky and MacPherson use a simplicial point of view and in particular the notion of p-allowable simplicial chains. The definition  that we have recalled here was given in \cite{K} by H. King. Over a PL-stratified pseudomanifold it is equivalent to the Goresky and MacPherson's definition but the advantage is that it holds even if $X$ is only a stratified pseudomanifold.
\end{rem}

However, for our goals we need a more general notion of perversity and associated intersection homology. A generalization of the theory of Goresky and MacPherson that is suited for our needs was made by Greg Friedman. As in the previous case we recall only the main definitions and results and we refer to the \cite{GP}, \cite{IGP} and \cite{SP} for a complete development of the theory.\\First, we remember that the theory proposed by Friedman applies to a \textbf{wider class of spaces}: from now on a \textbf{stratified pseudomanifold will be simply  a paracompact Hausdorff topological space $X$ which posses a topological stratification and such that $X-X_{n-1}$ is dense in $X$}. \textbf{ That is, we do not require  that the condition $X_{m-1}=X_{m-2}$  apply}. In the following propositions each stratified pseudomanifolds will have a\textbf{ fixed stratification}. We start by introducing the notion of \textbf{general perversity}:
\begin{defi} A general perversity on a stratified pseudomanifold $X$ is any function
\begin{equation}
 p:\{Singular\ Strata\ of\ X\}\rightarrow \mathbb{Z}.
\end{equation}
\label{ujn}
\end{defi}

The notion of $p-$\textbf{allowable} singular simplex is modified in the following way:
a singular $i-$simplex in $X$, i.e. a continuous map $\sigma:\Delta_{i}\rightarrow X$, is $p-$\textbf{allowable} if
\begin{equation} \sigma^{-1}(Y)\subset \{(i-cod(Y)+p(Y))-skeleton\ of\ \Delta_{i}\}\ for\ any\ singular\ stratum\ Y\ of\ X.
\end{equation}

A key ingredient in this new theory is the notion of \textbf{homology with stratified  coefficient system}. (The definition uses the notion homology with local coefficient system; for the definition of local coefficient system see \cite{AH}, \cite{NS}, \cite{DK})

\begin{defi} Let $X$ stratified pseudomanifold and let $\mathcal{G}$ a local system on $X-X_{n-1}$. Then the stratified  coefficient sistem $\mathcal{G}_{0}$ is defined to consist of the pair of coefficient systems given by $\mathcal{G}$ on $X-X_{n-1}$ and the constant $0$ system on $X_{n-1}$ i.e. we think of $\mathcal{G}_{0}$ as consisting of a locally constant fiber bundle $\mathcal{G}_{X-X_{n-1}}$ over $X-X_{n-1}$ with fiber $G$ with the discrete topology together with the trivial bundle on $X_{n-1}$ with the stalk $0.$
\label{sc}
\end{defi}

Then a \textbf{coefficient} $n$ of a singular simplex $\sigma$ can be described by a lift of $\sigma|_{\sigma^{-1}(X-X_{n-1})}$ to $\mathcal{G}$ over $X-X_{n-1}$ together with the trivial lift of $\sigma|_{\sigma^{-1}(X_{n-1})}$ to the $0$ system on $X_{n-1}.$ A coefficient  of a simplex $\sigma$ is considered to be the $0$ coefficient if it maps each  points of $\Delta$ to the $0$ section of one of the coefficient systems. Note that if $\sigma^{-1}(X-X_{n-1})$ is path-connected then a coefficient lift of $\sigma$ to $\mathcal{G}_{0}$ is completely determined by the lift at a single point of $\sigma^{-1}(X-X_{n-1})$ by the lifting extension property for $\mathcal{G}$. The intersection homology chain complex $(I^{p}S_{*}(X,\mathcal{G}_{0}),\partial_{*})$ are defined in the same way as $I^{p}S_{*}(X,G)$, where $G$ is any field, but replacing the coefficient of simplices with coefficient in $\mathcal{G}_{0}$.
 If $n\sigma$ is a simplex $\sigma$ with its coefficient $n$, its boundary is given by the usual formula $\partial(n\sigma)=\sum_{j}(-1)^{j}(n\circ i_{j})(\sigma\circ i_{j})$ where $i_{j}:\Delta_{i-1}\rightarrow \Delta_{i}$ is the $j-$face inclusion map. Here $n\circ i_{j}$ should be interpreted as the restriction of $n$ to the $jth$ face of $\sigma$, restricting the lift to $\mathcal{G}$ where possible and restricting to $0$ otherwise. The basic idea behind the definition is  that when we consider if a chain is allowable with respect to a perversity, simplices with support entirely in $X_{n-1}$ should vanish and thus not be counted for admissibility considerations. (For more details see \cite{GP}, \cite{IGP} and \cite{SP}).\vspace{1 cm}
 
The next proposition shows that Friedman's theory is an extension of the classical theory made by Goresky and MacPherson. 
 
\begin{prop} (see \cite{IGP} pag. 110, \cite{SP} pag. 1985) If $p$ is a traditional perversity, that is a perversity like those defined in definition \ref{per}, and $X_{n-1}=X_{n-2}$  then $$I^{p}S_{*}(X,\mathcal{G})=I^{p}S_{*}(X,\mathcal{G}_{0}).$$
\label{wik}
\end{prop}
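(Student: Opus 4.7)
The plan is to establish a natural identification of the two chain complexes, reducing to a combinatorial observation about the allowability condition and a path-connectedness observation about preimages. The essential point is that when $p$ is traditional and $X_{n-1}=X_{n-2}$, the $p$-allowability condition is already strong enough to force precisely the behaviour that the stratified coefficient system $\mathcal{G}_0$ builds into its definition by fiat.

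First I would prove the following key combinatorial lemma: if $\sigma:\Delta_i\to X$ is $p$-allowable, then $\sigma^{-1}(X_{n-1})$ is contained in the $(i-2)$-skeleton of $\Delta_i$. Indeed, since $X_{n-1}=X_{n-2}$ every singular stratum $Y$ has $\mathrm{cod}(Y)\geq 2$, and since $p$ is traditional we have $p(Y)=p(\mathrm{cod}(Y))\leq \mathrm{cod}(Y)-2$, so $i-\mathrm{cod}(Y)+p(Y)\leq i-2$; taking the union over singular strata gives the claim. From this I would draw two consequences: (a) the interior of $\Delta_i$ maps to $X-X_{n-1}$, whence $\sigma^{-1}(X-X_{n-1})$ is open, dense and path-connected; and (b) since no $(i-1)$-face of $\Delta_i$ lies in the $(i-2)$-skeleton, no face of $\sigma$ has image entirely contained in $X_{n-1}$.

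Next I would use (a) to set up the bijection of coefficients. A coefficient of $\sigma$ in $\mathcal{G}_0$ is by definition a lift of $\sigma|_{\sigma^{-1}(X-X_{n-1})}$ to $\mathcal{G}$ together with the trivial (zero) lift on $\sigma^{-1}(X_{n-1})$; in the classical setup with the local system $\mathcal{G}$ on $X-X_{n-1}$, a coefficient is exactly the first piece of data. Path-connectedness of $\sigma^{-1}(X-X_{n-1})$ ensures that such a lift is uniquely determined by its value at a single base point in the fibre, so the two sets of coefficients are canonically identified, simplex by simplex. Combined with the observation that the underlying set of $p$-allowable singular simplices is the same in both theories, this gives a bijection on generators, i.e. $I^{p}S_i(X,\mathcal{G})=I^{p}S_i(X,\mathcal{G}_0)$ as groups in each degree.

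Finally I would verify that the boundary maps coincide under this identification. Thanks to consequence (b), every face $\sigma\circ i_j$ has preimage of $X-X_{n-1}$ non-empty, so the restriction $n\circ i_j$ is a genuine non-zero lift in the classical sense and the formula $\partial(n\sigma)=\sum_j(-1)^j(n\circ i_j)(\sigma\circ i_j)$ agrees with the classical boundary; moreover the condition that $\partial\sigma$ be $p$-allowable reduces in both cases to requiring each face of $\sigma$ to be $p$-allowable, since in $\mathcal{G}_0$ no face is suppressed by a vanishing coefficient. The main delicate point I anticipate is precisely this last step: one must keep careful track of how $\mathcal{G}_0$'s zero sections on $X_{n-1}$ compare with the classical convention (where $\mathcal{G}$ is only defined off $X_{n-1}$) so that the identification of coefficients is compatible with the face operators; once (a) and (b) are in hand, however, this verification is a routine bookkeeping exercise.
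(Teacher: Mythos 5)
Your proposal is correct, and it is essentially the argument of Friedman in the references cited: the paper itself gives no proof of this proposition, quoting it from \cite{IGP} and \cite{SP}, so there is no in-paper proof to compare against. Your key lemma is exactly the right reduction: $p$ traditional gives $p(k)\leq k-2$ by induction from $p(2)=0$ and $p(k+1)\leq p(k)+1$, and $X_{n-1}=X_{n-2}$ ensures every singular stratum has codimension $\geq 2$, so $i-\mathrm{cod}(Y)+p(Y)\leq i-2$ and $\sigma^{-1}(X_{n-1})$ lands in the $(i-2)$-skeleton. Only two small points deserve an explicit line in a written-out version. First, path-connectedness of $\sigma^{-1}(X-X_{n-1})$ does not follow merely from its containing the interior of $\Delta_i$: you should note that it is an open set containing the dense, path-connected complement of the $(i-2)$-skeleton, so any of its points joins to that complement through a small convex neighbourhood (the degenerate cases $i=0,1$, where the $(i-2)$-skeleton is empty, are trivial). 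Second, besides your consequence (b) for faces, observe that the lemma already shows an allowable $i$-simplex itself cannot have support in $X_{n-1}$ (its preimage of $X_{n-1}$ is a proper skeleton), so no generator of $I^{p}S_i(X,\mathcal{G}_0)$ is suppressed by the zero system; together with the dichotomy that a locally constant lift over a path-connected set is either nowhere zero or identically zero, this makes your simplex-by-simplex identification of coefficients, and its compatibility with the face operators and with the allowability condition on boundaries, go through exactly as you describe.
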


\begin{exa}
Let $X$ be a stratified pseudomanifold and $p$ a general perversity on $X$. Consider as stratified coefficient system $\mathcal{R}_{0}$, that is  the pair of coefficient systems given by $(X-X_{n-1})\times \mathbb{R}$ over $X-X_{n-1}$ where the fibers $\mathbb{R}$ have the discrete topology  and the constant $0$ system on $X_{n-1}$. Now suppose that $X$ and $p$ satisfy the assumptions of proposition \ref{wik}; then $$I^{p}S_{*}(X,\mathbb{R})=I^{p}S_{*}(X,\mathcal{R}_{0})$$ where $I^{p}S_{*}(X,\mathbb{R})$ is the usual intersection homology chain complex with coefficient in the field $\mathbb{R}$. 
\label{ytr}
\end{exa}

We conclude this section  recalling  some fundamental results of this theory that generalize the previous results obtained by Goresky and MacPherson.\\

Let $X$ a stratified pseudomanifold, $\mathfrak{X}$ a fixed stratification on $X$, $p$ a generalized perversity on $X$, $\mathcal{G}$ a local system on $X-X_{n-1}$ and $\mathcal{O}$ the orientation sheaf on $X-X_{n-1}$.\\ Consider now the following \textbf{set of axioms} $(AX1)_{p, \mathfrak{X},\mathcal{G}\otimes \mathcal{O}}$ for a complex of sheaves $(\mathcal{S}^{*}, d_{*})$:
\begin{enumerate}
\item $\mathcal{S}^{*}$\ is\ bounded, $\mathcal{S}^{i}=0$\ for $i<0$ and\ $\mathcal{S}^{*}|_{X-X_{n-1}}$ is quasi-isomorphic to $\mathcal{G}\otimes \mathcal{O}.$
\item If $x\in Z$ for a stratum $Z$, then $H_{i}(\mathcal{S}_{x}^{*})=0$ for $i>p(Z).$
\item Let $U_{k}=X-X_{n-k}$ and let $i_{k}:U_{k}\rightarrow U_{k+1}$ the natural inclusion. Then for $x\in Z\subset U_{k+1}$  the attachment map $\alpha_{k}:\mathcal{S}^{*}|_{U_{k+1}}\rightarrow Ri_{k*}i_{k}^{*}\mathcal{S}^{*}|_{U_{k+1}}$, given by the composition of natural morphism $\mathcal{S}^{*}|_{U_{k+1}}\rightarrow i_{k*}i_{k}^{*}\mathcal{S}^{*}|_{U_{k+1}}\rightarrow Ri_{k*}i_{k}^{*}\mathcal{S}^{*}|_{U_{k+1}}$, is a quasi-isomorphism at $x$ up to $p(Z).$
\label{ax}
\end{enumerate}

In almost all references the previous axioms are formulated in the derived category of sheaves on $X$. In that case the term quasi-isomorphism should be replaced with the term isomorphism.

\begin{teo} (see \cite{GP} pag 116) Let $X$ a compact stratified pseudomanifold of dimension $n$, $p$ a general perversity on $X$ and $(\mathcal{S}^{*}, d_{*})$ a complex of sheaves that satisfies the set of axioms $(AX1)_{p, \mathfrak{X},\mathcal{G}\otimes \mathcal{O}}$. Then the following isomorphism holds: 
\begin{equation}
\mathbb{H}^{i}(X,\mathcal{S}^{*})\cong I^{p}H_{n-i}(X,\mathcal{G}_{0})
\end{equation} 
that is the $i-$th hypercohomology group of the complex $(\mathcal{S}^{*}, d_{*})$ is isomorphic to the $(n-i)-$th intersection homology group with coefficient in the stratified system $\mathcal{G}_{0}$ and relative to the perversity $p$. 
\label{tre}
\end{teo}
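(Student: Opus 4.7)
The plan is to follow the classical sheaf-theoretic strategy of Goresky--MacPherson, adapted to Friedman's setting of general perversities and stratified coefficients. The argument has two main parts: (a) construct a canonical complex of sheaves $\mathcal{I}\mathcal{C}^{*}_{p,\mathcal{G}_{0}}$ whose hypercohomology on $X$ equals $I^{p}H_{n-*}(X,\mathcal{G}_{0})$ and which itself satisfies $(AX1)_{p,\mathfrak{X},\mathcal{G}\otimes\mathcal{O}}$; and (b) prove a uniqueness statement saying that any complex of sheaves satisfying $(AX1)_{p,\mathfrak{X},\mathcal{G}\otimes\mathcal{O}}$ is quasi-isomorphic to $\mathcal{I}\mathcal{C}^{*}_{p,\mathcal{G}_{0}}$ in the derived category. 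Combining (a) and (b) immediately gives the desired isomorphism $\mathbb{H}^{i}(X,\mathcal{S}^{*})\cong I^{p}H_{n-i}(X,\mathcal{G}_{0})$.

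For (a), I would sheafify the presheaf $U\mapsto I^{p}S^{BM}_{n-*}(U,\mathcal{G}_{0})$ of Borel--Moore intersection chains (or, equivalently, set $\mathcal{I}\mathcal{C}^{*}_{p,\mathcal{G}_{0}}(U):=I^{p}S_{n-*}(X,X\setminus\overline{U};\mathcal{G}_{0})$), obtaining a complex of soft sheaves on $X$. Global hypercohomology then computes $I^{p}H_{n-i}(X,\mathcal{G}_{0})$ because $X$ is compact. The three axioms are verified directly on stalks, using the local product structure $N_{x}\cong\mathbb{R}^{j}\times C(L)$ around a point $x$ in a stratum of codimension $k$: axiom (1) is the statement that on the regular part Borel--Moore intersection chains reduce to ordinary Borel--Moore chains twisted by $\mathcal{G}\otimes\mathcal{O}$; axiom (2) follows from Friedman's cone formula for intersection homology with stratified coefficients, which gives $H_{i}((\mathcal{I}\mathcal{C}^{*}_{p,\mathcal{G}_{0}})_{x})=0$ for $i>p(Z)$; and axiom (3) is the standard cone-attaching identity saying that the natural map from chains on a cone to the truncation of chains on the link is a quasi-isomorphism in the right range.

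For (b), I would argue by induction on $k$ that any two complexes $\mathcal{S}^{*}$ and $\mathcal{T}^{*}$ satisfying $(AX1)_{p,\mathfrak{X},\mathcal{G}\otimes\mathcal{O}}$ admit a quasi-isomorphism $\mathcal{S}^{*}|_{U_{k+1}}\simeq\mathcal{T}^{*}|_{U_{k+1}}$ that extends the one already constructed on $U_{k}$. The base case $U_{1}=X\setminus X_{n-1}$ is exactly axiom (1). For the inductive step one uses the attaching triangle: by axiom (3), both $\mathcal{S}^{*}|_{U_{k+1}}$ and $\mathcal{T}^{*}|_{U_{k+1}}$ are obtained, up to quasi-isomorphism at each $x\in Z\subset X_{n-k}\setminus X_{n-k-1}$, as the truncation $\tau_{\leq p(Z)}Ri_{k*}(\mathcal{S}^{*}|_{U_{k}})$ applied stratum-by-stratum. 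Taking injective (or c-soft) resolutions, the inductive quasi-isomorphism on $U_{k}$ lifts through the natural map to $Ri_{k*}i_{k}^{*}$ and then, after truncating, gives the desired quasi-isomorphism on $U_{k+1}$; axiom (2) guarantees that the truncation does not lose information.

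The main obstacle will be the inductive step of (b), and specifically the need to perform the truncation stratum-by-stratum rather than codimension-by-codimension. In the original Goresky--MacPherson setup $p$ depends only on codimension, so on $U_{k+1}\setminus U_{k}$ one applies a single truncation functor $\tau_{\leq p(k)}$. With Friedman's general perversities, different connected components of $X_{n-k}\setminus X_{n-k-1}$ may carry different values $p(Z)$, so one must work locally near each stratum $Z$ separately, patch the resulting quasi-isomorphisms, and verify that the local truncations are compatible with the stratified coefficient system $\mathcal{G}_{0}$ (which vanishes along $X_{n-1}$). The attaching-map formulation in axiom (3), being stated pointwise on each stratum $Z$, is tailored precisely so that this patching goes through, and the rest of the argument then follows the Deligne-type construction in the classical proof.
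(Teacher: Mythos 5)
The paper does not actually prove this theorem: it is quoted with a citation from Friedman \cite{GP} (see also \cite{IGP}), and your outline---sheafifying the Borel--Moore intersection chain presheaf, verifying the axioms via Friedman's cone formula for stratified coefficients $\mathcal{G}_{0}$, and establishing axiomatic uniqueness by a Deligne-type induction in which the truncation is performed stratum-by-stratum (Friedman's locally constant truncation functor) rather than codimension-by-codimension---is essentially Friedman's own argument, so your approach matches the source. The only cosmetic discrepancy is that the singular intersection chain sheaf is there shown to be homotopically fine rather than soft, which serves the same purpose of identifying hypercohomology with the cohomology of global sections.
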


\begin{cor} In the same hypothesis of the previous theorem if $(\mathcal{S}^{*}, d_{*})$ is a complex of fine or flabby or soft sheaves then the following isomorphism holds:
\begin{equation}
H^{i}(\mathcal{S}^{*}(X),d_{*})\cong I^{p}H_{n-i}(X,\mathcal{G}_{0})
\end{equation}
where $H^{i}(\mathcal{S}^{*}(X),d_{*})$ are the cohomology groups of the complex $$0...\stackrel{d_{i-1}}{\rightarrow}\mathcal{S}^{i}(X)\stackrel{d_{i}}{\rightarrow}\mathcal{S}^{i+1}(X)\stackrel{d_{i+1}}{\rightarrow}\mathcal{S}^{i+2}(X)\stackrel{d_{i+2}}{\rightarrow}...$$
\label{mlp}
\end{cor}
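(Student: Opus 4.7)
The plan is to deduce this corollary as a formal consequence of Theorem \ref{tre} together with the classical acyclicity of fine, soft, and flabby sheaves on paracompact Hausdorff spaces. Since Theorem \ref{tre} already identifies the hypercohomology $\mathbb{H}^i(X,\mathcal{S}^*)$ with $I^p H_{n-i}(X,\mathcal{G}_0)$, the only thing left to establish is the isomorphism
\[
H^i(\mathcal{S}^*(X),d_*) \;\cong\; \mathbb{H}^i(X,\mathcal{S}^*)
\]
under the extra acyclicity hypothesis on the terms $\mathcal{S}^i$.

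First I would recall the classical fact that on a paracompact Hausdorff topological space every fine, soft, or flabby sheaf $\mathcal{F}$ satisfies $H^q(X,\mathcal{F})=0$ for all $q\geq 1$. Since our $X$ is a compact stratified pseudomanifold, it is in particular paracompact Hausdorff, so each term of the complex $(\mathcal{S}^*,d_*)$ is acyclic for the global sections functor $\Gamma(X,-)$.

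Next, I would invoke the standard hypercohomology spectral sequence associated to the double complex obtained by applying $\Gamma(X,-)$ to a Cartan--Eilenberg resolution of $(\mathcal{S}^*,d_*)$, namely
\[
E_1^{p,q} \;=\; H^q(X,\mathcal{S}^p) \;\Longrightarrow\; \mathbb{H}^{p+q}(X,\mathcal{S}^*).
\]
By the acyclicity above, $E_1^{p,q}=0$ for every $q\geq 1$, so the spectral sequence collapses on the row $q=0$, and only the edge terms survive. The remaining $E_1$-differential is $d_*$ acting on $\Gamma(X,\mathcal{S}^*)$, whence
\[
\mathbb{H}^i(X,\mathcal{S}^*) \;\cong\; H^i\bigl(\Gamma(X,\mathcal{S}^*),d_*\bigr) \;=\; H^i(\mathcal{S}^*(X),d_*).
\]
Composing this with the isomorphism provided by Theorem \ref{tre} yields the claimed identification $H^i(\mathcal{S}^*(X),d_*) \cong I^p H_{n-i}(X,\mathcal{G}_0)$.

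There is no serious obstacle: the corollary is purely formal once Theorem \ref{tre} is available. The only substantive ingredient is the acyclicity of fine, soft, and flabby sheaves on paracompact Hausdorff spaces, which is classical (Godement, Bredon). If one prefers to avoid spectral sequences, one can alternatively argue directly: embed $(\mathcal{S}^*,d_*)$ into an injective resolution $(\mathcal{I}^*,\delta_*)$ producing a quasi-isomorphism $\mathcal{S}^*\to\mathcal{I}^*$, and then observe that acyclicity of each $\mathcal{S}^i$ together with the standard comparison theorem for $\delta$-functors implies that $\Gamma(X,\mathcal{S}^*)\to\Gamma(X,\mathcal{I}^*)$ is itself a quasi-isomorphism, whose cohomology is by definition $\mathbb{H}^*(X,\mathcal{S}^*)$.
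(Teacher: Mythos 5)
Your argument is correct: the paper states this corollary without any proof, treating it as an immediate classical consequence of Theorem \ref{tre}, and your spectral-sequence argument (collapse of $E_1^{p,q}=H^q(X,\mathcal{S}^p)$ onto the row $q=0$ by acyclicity of fine, soft, and flabby sheaves on a paracompact space) is precisely the standard justification being invoked. Note that the boundedness of $\mathcal{S}^*$ required for convergence is guaranteed by the first axiom of $(AX1)_{p,\mathfrak{X},\mathcal{G}\otimes\mathcal{O}}$, so nothing is missing.
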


\begin{teo}(see \cite{GP} pag 122 or \cite{IGP} pag 25.) Let $F$ a field, $X$ a compact and $F-$oriented stratified pseudomanifold of dimension $n$, $p,\ q$  general perversities on $X$ such that $p+q=t$ (that is for each stratum $Z\subset X$ $p(Z)+q(Z)=codim(Z)-2$) and $\mathcal{F}_{0}$ a stratified coefficient system over $X$,  consisting of  the pair of coefficient systems given by $(X-X_{n-1})\times F$ over $X-X_{n-1}$ where the fibers $F$ have the discrete topology  and the constant $0$ system on $X_{n-1}$. Then the following isomorphism holds:
\begin{equation}
I^{p}H_{i}(X,\mathcal{F}_{0})\cong Hom(I^{q}H_{n-i}(X,\mathcal{F}_{0}), F).
\end{equation}
\label{mzb}
\end{teo}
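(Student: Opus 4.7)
The plan is to derive the stated duality from the axiomatic characterization of Theorem \ref{tre} together with Verdier duality, following the sheaf-theoretic scheme of Goresky--MacPherson as extended by Friedman to general perversities and stratified coefficients. First I would construct the Deligne-type sheaf complex $\mathcal{P}^{*}_{p}$ on $X$ inductively along the filtration $U_{k}=X-X_{n-k}$: start with $\mathcal{F}\otimes \mathcal{O}$ on $U_{1}=X-X_{n-1}$, and at each step pass from $U_{k}$ to $U_{k+1}$ by applying the truncation $\tau_{\leq p(\cdot)}$ to the pushforward $Ri_{k*}$. By construction $\mathcal{P}^{*}_{p}$ satisfies the axioms $(AX1)_{p,\mathfrak{X},\mathcal{F}\otimes\mathcal{O}}$, so Theorem \ref{tre} yields $\mathbb{H}^{i}(X,\mathcal{P}^{*}_{p})\cong I^{p}H_{n-i}(X,\mathcal{F}_{0})$, and analogously for $q$.

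The technical heart of the argument is showing that the Verdier dualizing functor $\mathbb{D}$ on $X$ exchanges the two constructions: namely, that $\mathbb{D}(\mathcal{P}^{*}_{p})[-n]$, suitably twisted by the orientation sheaf, satisfies the axioms $(AX1)_{q,\mathfrak{X},\mathcal{F}\otimes\mathcal{O}}$ for the complementary perversity $q=t-p$. The verification is stratum by stratum. On the regular part $U_{1}$, where $X$ is an $F$-oriented topological manifold, this is Poincar\'e--Verdier duality for the local system $\mathcal{F}$. On a singular stratum $Z$ of codimension $k\geq 2$, the stalk-vanishing $H^{j}(\mathcal{P}^{*}_{p,x})=0$ for $j>p(Z)$ dualizes, via the cone formula for local hypercohomology on the link $cL$, to an attaching-axiom statement compatible with the truncation degree $q(Z)=k-2-p(Z)$. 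Once this compatibility is established, Theorem \ref{tre} applied to $\mathbb{D}(\mathcal{P}^{*}_{p})[-n]$ gives $\mathbb{H}^{i}(X,\mathbb{D}(\mathcal{P}^{*}_{p})[-n])\cong I^{q}H_{n-i}(X,\mathcal{F}_{0})$. Since $X$ is compact and $F$ is a field, global Verdier duality supplies $\mathbb{H}^{i}(X,\mathbb{D}(\mathcal{P}^{*}_{p}))\cong \mathrm{Hom}_{F}(\mathbb{H}^{-i}(X,\mathcal{P}^{*}_{p}),F)$; collating these identifications produces the claimed isomorphism.

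The main obstacle I anticipate is handling strata of codimension $1$, which the general-perversity framework of the paper explicitly allows, in contrast to the classical Goresky--MacPherson setup where $X_{n-1}=X_{n-2}$ and the issue does not arise. This is precisely the place where the use of the stratified coefficient system $\mathcal{F}_{0}$, which is the zero system on $X_{n-1}$, becomes indispensable: the attaching axiom at codimension-$1$ strata becomes vacuous on both sides of the duality, and the zero-system convention matches the formal relation $p(Z)+q(Z)=-1$ there without forcing any comparison of nonzero stalk data. Verifying that this convention cleanly propagates through the Verdier-dual stalk computation, so that $\mathbb{D}(\mathcal{P}^{*}_{p})[-n]$ really satisfies the $q$-axioms at every singular stratum (including those of codimension $1$), is the non-routine step that distinguishes Friedman's generalization from the classical statement and is what makes the appearance of $\mathcal{F}_{0}$ on both sides of the asserted isomorphism intrinsic rather than cosmetic.
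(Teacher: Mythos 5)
The paper offers no proof of Theorem \ref{mzb}: it is imported verbatim from Friedman (\cite{GP} pag.~122, \cite{IGP} pag.~25), and Friedman's own proof is precisely the route you sketch --- a Deligne-type sheaf $\mathcal{P}^{*}_{p}$ built by stratum-dependent truncation $\tau_{\leq p(\cdot)}$ of $Ri_{k*}$ along the filtration, a stalk/costalk verification via the cone formula (which holds for arbitrary integer perversity values only because of the stratified coefficients $\mathcal{F}_{0}$) showing that Verdier duality exchanges $\mathcal{P}^{*}_{p}$ and $\mathcal{P}^{*}_{q}$ up to shift and orientation twist, and then global Verdier duality over the field $F$ on the compact $X$. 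Your proposal is therefore correct in outline and essentially the same argument as the cited source, including your key observation that $\mathcal{F}_{0}$ is what makes the axiomatics and the dual-stalk computation go through for perversities outside the Goresky--MacPherson range and at codimension-one strata, where $p(Z)+q(Z)=-1$.
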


\begin{rem} In this paper with the symbol $I^{p}H^{i}(X,\mathcal{G}_{0})$ we mean the cohomology of the complex $$(Hom(I^{p}S_{i}(X,\mathcal{G}_{0}), G), (\partial_{i})^{*}).$$ We call it the $i-th$ intersection cohomology group of $X$ with respect to the perversity $p$ and the stratified coefficient system $\mathcal{G}_{0}$. When $G=F$ is a field then $$I^{p}H^{i}(X,\mathcal{F}_{0})\cong Hom(I^{p}H_{i}(X,\mathcal{F}_{0}), F).$$
\end{rem}

\begin{rem} Summarizing, by theorems \ref{tre} and \ref{mzb}, it  follows that if $(\mathcal{S}^{*}, d_{*})$ is a complex of sheaves that satisfies the set of axioms $(AX1)_{p, \mathfrak{X},\mathcal{F}\otimes \mathcal{O}}$ then
\begin{equation}
\mathbb{H}^{i}(X,\mathcal{S}^{*})\cong I^{q}H^{i}(X,\mathcal{F}_{0})
\end{equation} 
where $p+q=t$ and if  $(\mathcal{S}^{*}, d_{*})$ is a complex of fine or flabby or soft sheaves then, by corollary \ref{mlp},
\begin{equation}
H^{i}(\mathcal{S}^{*}(X),d_{*})\cong I^{q}H^{i}(X,\mathcal{F}_{0})
\end{equation}
\label{ooo}
\end{rem}

\subsection{Thom-Mather stratification and  quasi edge metrics with weights}

We start this subsection by giving the definition  of a smoothly stratified pseudomanifold with a Thom-Mather stratification. We follow \cite{ALMP}.
\begin{defi}   
 A smoothly stratified pseudomanifold $X$ with a Thom-Mather stratification is a metrizable, locally compact, second countable space which admits a locally finite decomposition into a union of locally closed strata $\mathfrak{G}=\{Y_{\alpha}\}$, where each $Y_{\alpha}$ is a smooth, open and connected manifold, with dimension depending on the index $\alpha$. We assume the following:
\begin{enumerate}
\item If $Y_{\alpha}$, $Y_{\beta} \in \mathfrak{G}$ and $Y_{\alpha} \cap \overline{Y}_{\beta} \neq \emptyset$ then $Y_{\alpha} \subset \overline{Y}_{\beta}$
\item  Each stratum $Y$ is endowed with a set of control data $T_{Y} , \pi_{Y}$ and $\rho_{Y}$ ; here $T_{Y}$ is a neighbourhood of $Y$ in $X$ which retracts onto $Y$, $\pi_{Y} : T_{Y} \rightarrow Y$
is a fixed continuous retraction and $\rho_{Y}: T_{Y}\rightarrow [0, 2)$ is a proper radial function in this tubular neighbourhood such that $\rho_{Y}^{-1}(0) = Y$ . Furthermore,
we require that if $Z \in \mathfrak{G}$ and $Z \cap T_{Y}\neq \emptyset$  then
$(\pi_{Y} , \rho_{Y} ) : T_{Y} \cap Z \rightarrow Y \times [0, 2)$
is a proper differentiable submersion.
\item If $W, Y,Z \in \mathfrak{G}$, and if $p \in T_{Y} \cap T_{Z} \cap W$ and $\pi_{Z}(p) \in T_{Y} \cap Z$ then
$\pi_{Y} (\pi_{Z}(p)) = \pi_{Y} (p)$ and $\rho_{Y} (\pi_{Z}(p)) = \rho_{Y} (p)$.
\item If $Y,Z \in \mathfrak{G}$, then
$Y \cap \overline{Z} \neq \emptyset \Leftrightarrow T_{Y} \cap Z \neq \emptyset$ ,
$T_{Y} \cap T_{Z} \neq \emptyset \Leftrightarrow Y\subset \overline{Z}, Y = Z\ or\ Z\subset \overline{Y} .$
\item  For each $Y \in \mathfrak {G}$, the restriction $\pi_{Y} : T_{Y}\rightarrow Y$ is a locally trivial fibration with fibre the cone $C(L_{Y})$ over some other stratified space $L_{Y}$ (called the link over $Y$ ), with atlas $\mathcal{U}_{Y} = \{(\phi,\mathcal{U})\}$ where each $\phi$ is a trivialization
$\pi^{-1}_{Y} (U) \rightarrow U \times C(L_{Y} )$, and the transition functions are stratified isomorphisms  which preserve the rays of each conic
fibre as well as the radial variable $\rho_{Y}$ itself, hence are suspensions of isomorphisms of each link $L_{Y}$ which vary smoothly with the variable $y\in U$.
\item For each $j$ let $X_{j}$ be the union of all strata of dimension less or equal than $j$, then $$X-X_{n-1}\ is\ dense\ in\ X$$
\end{enumerate}
\label{thom}
\end{defi}


We make a few comments to the previous definition (for more details we refer to \cite{ALMP}):
\begin{enumerate}
\item The previous definition is more general than that given in \cite{ALMP}. In \cite{ALMP} a space that satisfies the definition \ref{thom} is only a smoothly stratified spaces (with a Thom-Mather stratification). To be a smoothly stratified pseudomanifold (with a Thom-Mather stratification) there is another requirement to satisfy: let $Xj$ be the union of all strata of dimensions less  or equal
than $j$, then
\begin{equation}X = X_{n}\supset X_{n-1} = X_{n-2}\supset X_{n-3}\supset...\supset X_{0}
\label{fil}
\end{equation} and $X-X_{n-2}$ is dense in $X.$ For our goals, thanks to the results of Friedman, we can waive the requirement $X_{n-1}=X_{n-2}$ and therefore we will call  smoothly stratified pseudomanifold with a Thom-Mather stratification each space $X$ that satisfies the definition \ref{thom}.
\item The link $L_{Y}$ is uniquely determined, up to isomorphism (see point number 5 below for the notion of isomorphism), by the stratum $Y$. 
\item The depth of a stratum $Y$ is largest integer $k$ such that there is a chain of strata $Y=Y_{k},...,Y_{0}$ such that $Y_{j}\subset \overline{Y_{j-1}}$ for $i\leq j\leq k.$ A stratum of maximal depth is always a closed subset of $X$.  The  maximal depth of any stratum in $X$ is called the depth of $X$ as stratified spaces.
\item Consider the filtration
\begin{equation}
X = X_{n}\supset X_{n-1}\supset X_{n-2}\supset X_{n-3}\supset...\supset X_{0}
\label{pippo}
\end{equation} 
 We refer to the open subset $X-X_{n-1}$ of a stratified pseudomanifold $X$ as its regular set, and the union of all other strata as the singular set,
$$reg(X):=X-sing(X)\ \text{where}\ sing(X):=\bigcup_{Y\in \mathfrak{G}, depthY>0 }Y. $$
\item If $X, X'$ are two stratified spaces a stratified isomorphism between them is a homeorphism $F:X\rightarrow X'$ which carries the  strata of $X$ to the  strata of $X'$ diffeomorphically, and such that $\pi'_{F(Y)}\circ F=F\circ \pi_{Y},\ \rho_{Y}=\rho'_{(F(Y))}\circ F$ for all $Y\in \mathcal{G}(X).$
\end{enumerate}

Summarizing a smoothly stratified pseudomanifold with Thom-Mather stratification is a  stratified pseudomanifold with a richer structure from a differentiable and topological point of view.\vspace{1 cm}

Now we introduce an important class of riemannian metrics on the regular part of a smoothly stratified pseudomanifold  with a Thom-Mather stratification. Before giving the definition we recall that two riemannian metrics $g,h$ on a smooth manifold $M$ are  \textbf{quasi-isometric} if there are constants $c_{1}, c_{2}$ such that $c_{1}h\leq g\leq c_{2}h$. 

\begin{defi} Let $X$ be a smoothly stratified pseudomanifold with a Thom-Mather stratification and let $g$ a riemannian metric on $reg(X)$.     We call  $g$ a \textbf{ quasi edge metric with weights} if it satisfies the following properties:
\begin{enumerate}
\item Take any stratum $Y$ of $X$; by   definition \ref{thom} for each $q\in Y$ there exist an open neighbourhood $U$ of $q$ in $Y$ such that $\phi:\pi_{Y}^{-1}(U)\rightarrow U\times C(L_{Y})$ is a stratified isomorphism; in particular $\phi:\pi_{Y}^{-1}(U)\cap reg(X)\rightarrow U\times reg(C(L_{Y}))$ is a diffeomorphism. Then, for each $q\in Y$, there exists one of these trivializations $(\phi,U)$ such that $g$ restricted on $\pi_{Y}^{-1}(U)\cap reg(X)$ satisfies the following properties:
\begin{equation} 
(\phi^{-1})^{*}(g|_{\pi_{Y}^{-1}(U)\cap reg(X)})\cong dr\otimes dr+h_{U}+r^{2c}g_{L_{Y}}
\label{yhnn}
\end{equation}
 where $h_{U}$ is a riemannian metric defined over $U$, $c\in \mathbb{R}$ and $c>0$, $g_{L_{Y}}$ is a riemannian metric   on $reg(L_{Y})$, $dr\otimes dr+h_{U}+r^{2c}g_{L_{Y}}$ is a riemannian metric of product type on $U\times reg(C(L_{Y}))$ and with $\cong$ we mean \textbf{quasi-isometric}. 
 \item  If $p$ and $q$ lie in the same stratum $Y$ then in \eqref{yhnn} there is the same weight. We label it $c_{Y}$. 
\end{enumerate}
\label{zedge}
\end{defi}

Before continuing we make some \textbf{remarks}:

\begin{enumerate}
\item Obviously if the codimension of $Y$ is $1$ then $L_{Y}$ is just a point and therefore by the previous definition $(\phi^{-1})^{*}(g|_{\pi_{Y}^{-1}(U)\cap reg(X)})\cong dr\otimes dr+h_{U}$.
\item In the first point of the previous definition the metric $g_{L_{Y}}$ depends also on the open neighborhood $U$ and the stratified isomorphism $\phi$. However we prefer to use the notation $g_{L_{Y}}$ instead of $g_{L_{Y},U,\phi}$ for the sake of  simplicity.
\item Let $g$ and $U$ be like in the first point of the previous definition and let $\psi:\pi_{Y}^{-1}(U)\rightarrow U\times C(L_{Y})$ another stratified isomorphism that satisfies the requirements of definition \ref{thom}. From the fifth point of definition \ref{thom} it follows that $\psi\circ \phi^{-1}:U\times C(L_{Y})\rightarrow U\times C(L_{Y})$ acts in this way: given $p=(y,[r,x])\in U\times C(L_{Y})$ $(\psi\circ \phi^{-1})(p)=(y,[r,f(y,x)])$ where the maps $x\mapsto f(y,x)$ are a family of smooth stratified isomorphisms of $L_{Y}$  which vary smoothly with the variable $y\in U$. From this it follows immediately that if we fix a point $y_{0}\in U$  and if we put $h_{L_{Y}}=(f(y_{0},x)^{-1})^{*}(g_{L_{Y}})$ then there exists an open subset $ V\subset U, y_{0}\in V$  such that $(\psi^{-1})^{*}(g|_{\pi_{Y}^{-1}(V)\cap reg(X)})\cong dr\otimes dr+h_{U}|_{V}+r^{2c_{Y}}h_{L_{Y}}$ where $h_{U}|_{V}$ is the metric $h_{U}$ restricted to $V$. Therefore the weight $c_{Y}$ does not depend from the particular trivialization $\phi$ that it is chosen.
\end{enumerate}

Now we give a definition which is a more refined version of the previous one; it is also a slight generalization of the definition of the adapted metric given by Brasselet, Hector and Saralegi in \cite{BHS}.  This definition  is given by induction on $depth(X)$.

\begin{defi} Let $X$ be a stratified pseudomanifold with a Thom-Mather stratification and let $g$ a riemannian metric on $reg(X)$. If $depth(X)=0$, that is $X$ is a closed manifold, a \textbf{ quasi rigid iterated  edge metric with weights} is any riemannian metric on $X$. Suppose now that $depth(X)=k$ and that the definition of quasi rigid iterated  edge metric with weights  is given in the case $depth(X)\leq k-1$; then   we call a riemannian metric $g$ on $reg(X)$ a \textbf{quasi rigid iterated  edge metric with weights} if it satisfies the following properties:
\begin{enumerate}
\item Take any stratum $Y$ of $X$; by   definition \ref{thom} for each $q\in Y$ there exist an open neighbourhood $U$ of $q$ in $Y$ such that $\phi:\pi_{Y}^{-1}(U)\rightarrow U\times C(L_{Y})$ is a stratified isomorphism; in particular $\phi:\pi_{Y}^{-1}(U)\cap reg(X)\rightarrow U\times reg(C(L_{Y}))$ is a diffeomorphism. Then, for each $q\in Y$, there exists one of these trivializations $(\phi,U)$ such that $g$ restricted on $\pi_{Y}^{-1}(U)\cap reg(X)$ satisfies the following properties:
\begin{equation} 
(\phi^{-1})^{*}(g|_{\pi_{Y}^{-1}(U)\cap reg(X)})\cong dr\otimes dr+h_{U}+r^{2c}g_{L_{Y}}
\label{yhn}
\end{equation}
 where $h_{U}$ is a riemannian metric defined over $U$, $c\in \mathbb{R}$ and $c>0$, $g_{L_{Y}}$ is a \textbf{quasi rigid iterated edge metric with weights}  on $reg(L_{Y})$, $dr\otimes dr+h_{U}+r^{2c}g_{L_{Y}}$ is a riemannian metric of product type on $U\times reg(C(L_{Y}))$ and with $\cong$ we mean \textbf{quasi-isometric}. 
 \item If $p$ and $q$ lie in the same stratum $Y$ then in \eqref{yhn} there is the same weight. We label it $c_{Y}$.
\end{enumerate}
\label{edge}
\end{defi}

 Also in this case  a remark to the previous definition is in order. Let $\psi:\pi_{Y}^{-1}(U)\rightarrow U\times C(L_{Y})$ another stratified isomorphism that satisfies the requirements of definition \ref{thom}.  Using the same observations and notations of the second remark of definition \ref{zedge} we can conclude that there exists an open subset $V\subset U$ and a quasi rigid iterated edge metric with weights $h_{L_{Y}}$ on $reg(L_{Y})$ such that $(\psi^{-1})^{*}(g|_{\pi_{Y}^{-1}(V)\cap reg(X)})\cong dr\otimes dr+h_{U}|_{V}+r^{2c_{Y}}h_{L_{Y}}.$ Furthermore, by the fact that $f(y_{0},x)$ is a smooth stratified isomorphism between $L_{Y}$ and $L_{Y}$ such that $(f(y_{0},x))^{*}(h_{L_{Y}})=g_{L_{Y}}$, it follows that $g_{L_{Y}}$ and $h_{L_{Y}}$ have the same weights and therefore, by proposition \ref{tip} below, $g_{L_{Y}}$ and $h_{L_{Y}}$ are quasi-isometric on $reg(L_{Y})$ when $L_{Y}$ is compact.
\begin{prop} Let $X$ be a smoothly stratified pseudomanifold with a Thom-Mather stratification $\mathfrak{X}$. For any stratum $Y\subset X$ fix a positive real number $c_{Y}$. Then there exists a quasi rigid iterated  edge metric with weights $g$ on $reg(X)$ having the numbers $\{c_{Y}\}_{Y\in \mathfrak{X}}$ as weights.
\label{top}
\end{prop}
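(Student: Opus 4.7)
The plan is to prove the statement by induction on $\mathrm{depth}(X)$. The base case, $\mathrm{depth}(X)=0$, is immediate: then $X$ is a closed smooth manifold, and any Riemannian metric on $X$ (constructed by a partition of unity in the usual way) qualifies as a quasi rigid iterated edge metric with weights, and the family of weights is vacuous.

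For the inductive step, suppose the statement holds in depth $<k$ and $\mathrm{depth}(X)=k$. Let $\{Y_\alpha\}$ be the (closed) strata of maximal depth $k$. For each $\alpha$, the link $L_{Y_\alpha}$ is a compact smoothly stratified pseudomanifold of depth at most $k-1$. Its strata are in natural bijection with the strata $Z\subset X$ satisfying $Y_\alpha\subset\overline{Z}$, $Z\neq Y_\alpha$, via the local trivialization $\phi:\pi_{Y_\alpha}^{-1}(U)\to U\times C(L_{Y_\alpha})$; transfer the prescribed weights $c_Z$ to weights on the strata of $L_{Y_\alpha}$ and apply the inductive hypothesis to obtain a quasi rigid iterated edge metric with weights $g_{L_{Y_\alpha}}$ on $\mathrm{reg}(L_{Y_\alpha})$ realizing these weights. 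Choose a locally finite open cover $\{U_i\}$ of $Y_\alpha$ with Thom--Mather trivializations $\phi_i:\pi_{Y_\alpha}^{-1}(U_i)\to U_i\times C(L_{Y_\alpha})$, pick arbitrary Riemannian metrics $h_{U_i}$ on $U_i$, and on $\pi_{Y_\alpha}^{-1}(U_i)\cap\mathrm{reg}(X)$ pull back the model $dr\otimes dr+h_{U_i}+r^{2c_{Y_\alpha}}g_{L_{Y_\alpha}}$ via $\phi_i$ to obtain a smooth metric $g_{\alpha,i}$. A partition of unity subordinate to $\{U_i\}$ (viewed as subsets of the tube $T_{Y_\alpha}$ via $\pi_{Y_\alpha}$) produces a single metric $g_\alpha$ on a neighbourhood of $Y_\alpha$ in $\mathrm{reg}(X)$.

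On the open set $X':=X\setminus \bigcup_\alpha\{\rho_{Y_\alpha}\le \epsilon\}$, for small $\epsilon>0$, the induced Thom--Mather stratification has depth at most $k-1$, so by the inductive hypothesis there is a quasi rigid iterated edge metric with weights $g'$ on $\mathrm{reg}(X')$ realizing the prescribed weights for the remaining strata. Using a partition of unity on $X$ subordinate to the open cover $\{T_{Y_\alpha}\}\cup\{X'\}$, glue the metrics $g_\alpha$ and $g'$ into a single Riemannian metric $g$ on $\mathrm{reg}(X)$.

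The main obstacle is verifying that the resulting $g$ is still quasi-isometric, on each tubular neighbourhood, to the required local model, so that Definition \ref{edge} is genuinely satisfied. The point is that a finite convex combination of Riemannian metrics, each quasi-isometric to a common reference metric with uniformly bounded constants, is again quasi-isometric to it, so the gluing of the various $g_{\alpha,i}$ over the $U_i$'s poses no problem (the different local trivializations of $\pi_{Y_\alpha}$ produce mutually quasi-isometric models by the remark following Definition \ref{edge}, and the different $h_{U_i}$'s agree up to quasi-isometry on any relatively compact overlap). The genuinely delicate point is compatibility in regions lying simultaneously in two tubes $T_{Y_\alpha}\cap T_{Y_\beta}$ with $Y_\alpha\subset\overline{Y_\beta}$: there one must use axioms $(3)$ and $(4)$ of Definition \ref{thom}, namely $\pi_{Y_\alpha}\circ\pi_{Y_\beta}=\pi_{Y_\alpha}$ and $\rho_{Y_\alpha}\circ\pi_{Y_\beta}=\rho_{Y_\alpha}$, to see that the inductively constructed metric $g_{L_{Y_\beta}}$ on $\mathrm{reg}(L_{Y_\beta})$ already carries, near the stratum of $L_{Y_\beta}$ corresponding to $Y_\alpha$, the iterated conic model with weight $c_{Y_\alpha}$, which is precisely what is needed to match the model form $dr\otimes dr+h_U+r^{2c_{Y_\alpha}}g_{L_{Y_\alpha}}$ coming from the tube $T_{Y_\alpha}$ up to quasi-isometry.
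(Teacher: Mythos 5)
Your construction is correct and is, in substance, the same argument the paper relies on: the paper's own proof of proposition \ref{top} does not spell out any induction but simply cites proposition 3.1 of \cite{ALMP} (existence of rigid iterated edge metrics, i.e.\ the case $c_{Y}=1$ with $X_{n-1}=X_{n-2}$), remarks that the argument extends to arbitrary weights $c_{Y}>0$ and to one-codimensional strata, and alternatively cites \cite{BHS}. The cited proofs proceed exactly by your scheme: induction on depth, local models pulled back through Thom--Mather trivializations, and partition-of-unity gluing, with the observation that a pointwise convex combination of metrics quasi-isometric to a common model remains quasi-isometric to it. So what you have done is essentially to write out in full the construction the paper delegates to the literature.

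One bookkeeping slip in your final paragraph should be corrected. If $Y_{\alpha}\subset \overline{Y_{\beta}}$ then $Y_{\alpha}$ is the \emph{deeper} stratum, and by axiom (4) of definition \ref{thom} the strata of the link $L_{Y_{\beta}}$ correspond to the strata $Z$ with $Y_{\beta}\subset\overline{Z}$; hence there is no stratum of $L_{Y_{\beta}}$ corresponding to $Y_{\alpha}$, and the matching you need goes the other way: it is the inductively constructed metric $g_{L_{Y_{\alpha}}}$ on $reg(L_{Y_{\alpha}})$ that carries, near the stratum of $L_{Y_{\alpha}}$ corresponding to $Y_{\beta}$, the conic model with the transferred weight $c_{Y_{\beta}}$. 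Note also that within your own setup both $Y_{\alpha}$ and $Y_{\beta}$ were declared to be of maximal depth, in which case $T_{Y_{\alpha}}\cap T_{Y_{\beta}}=\emptyset$ by axiom (4); the delicate overlaps are really between a maximal-depth tube $T_{Y_{\alpha}}$ and the tube $T_{Z}$ of a shallower stratum $Z$ with $Y_{\alpha}\subset\overline{Z}$, the region your metric $g'$ on $X'$ also sees. There, for $\rho_{Y_{\alpha}}$ confined to a compact subinterval of $(0,2)$, the factor $\rho_{Y_{\alpha}}^{2c_{Y_{\alpha}}}$ is bounded above and below, and your appeal to axioms (3) and (4) then does identify the two models up to quasi-isometry as intended. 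With the indices corrected, the proof is complete at the level of detail customary for this construction.
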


\begin{proof}
In \cite{ALMP} is defined a class of riemannian metric called \textbf{rigid iterated edge metric} and in prop. 3.1 of the same paper is proved the existence of such metrics. Using the same notation of definition \ref{edge} a riemannian metric $g$ on $reg(X)$ is a rigid iterated edge metric if $(\phi^{-1})^{*}(g|_{\pi_{Y}^{-1}(U)\cap reg(X)})= dr\otimes dr+h_{U}+r^{2}g_{L_{Y}}(u,y)$, with $u\in U$, $y\in L_{Y}$, and for any fixed $u$,  $g_{L_{Y}}(u,y)$ is a rigid iterated edge metric on $reg(L_{Y})$. In \cite{ALMP}  proposition 3.1 is proved in the case $X_{n-1}=X_{n-2}$ but  it is easy to see that it holds also in our case that is when $X_{n-1}\neq X_{n-2}$ and $c_{Y}\neq 1$ . Therefore on $reg(X)$ there is a rigid iterated edge metric $g$ having the numbers $\{c_{Y}\}_{Y\in \mathfrak{X}}$ as weights. Using again the notation of definition \ref{edge} this means that 
 for each stratum $Y$ and for any point $q\in Y$ $(\phi^{-1})^{*}(g|_{\pi_{Y}^{-1}(U)\cap reg(X)})= dr\otimes dr+h_{U}+r^{2c_{Y}}g_{L_{Y}}(u,y)$, with $u\in U$, $y\in L_{Y}$, and for any fixed $u$,  $g_{L_{Y}}(u,y)$ is a rigid iterated edge metric with weights on $reg(L_{Y})$.  Now it is clear that $g$ 
 is a quasi rigid iterated edge metric on $reg(X)$ having the numbers $\{c_{Y}\}_{Y\in \mathfrak{X}}$ as weights. Alternatively the existence of such metrics follows using the same arguments used by Brasselet, Hector and Saralegi in \cite{BHS}.
 
\end{proof}

\begin{prop} Let $X$ be  a compact smoothly stratified pseudomanifold with a Thom-Mather stratification. For any stratum $Y\subset X$ fix a positive real number $c_{Y}$. Let $g,g'$ two quasi edge metrics with weights on $reg(X)$ having both the numbers $\{c_{Y}\}_{Y\in \mathfrak{X}}$ as weights. Then $g$ and $g'$ are quasi-isometric.
\label{tip}
\end{prop}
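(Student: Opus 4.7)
My approach is induction on the depth of $X$. The base case $depth(X)=0$ is immediate, since $X$ is then a closed smooth manifold and any two Riemannian metrics on a compact smooth manifold are quasi-isometric by a standard partition of unity argument.

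For the inductive step I would first reduce to a purely local problem by compactness. Fix $\epsilon>0$ small and set $K:=reg(X)\setminus\bigcup_{Y\text{ singular}}\pi_Y^{-1}\{\rho_Y<\epsilon/2\}$; this is a compact subset of $reg(X)$, so $g|_K$ and $g'|_K$ are smooth Riemannian metrics on a compact manifold and are therefore quasi-isometric. Since $X$ has finitely many strata and each singular stratum admits a finite cover by trivializing neighborhoods, it remains to show quasi-isometry on each $\pi_Y^{-1}(V)\cap reg(X)$ for suitably chosen relatively compact $V\subset Y$.

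Fix a singular stratum $Y$ and a point $q\in Y$. Definition \ref{zedge} applied to $g$ gives a trivialization $(\phi,U)$ with $q\in U$ such that $(\phi^{-1})^{*}(g)\cong dr\otimes dr+h_U+r^{2c_Y}g_{L_Y}$. The third remark following Definition \ref{zedge} shows that even if $g'$ is originally presented via a different trivialization $\psi$, after shrinking $U$ to a neighborhood $V$ of $q$ one may rewrite $(\phi^{-1})^{*}(g')\cong dr\otimes dr+h'_U|_V+r^{2c_Y}g'_{L_Y}$ using the \emph{same} trivialization $\phi$. The problem thus splits into showing (i) $h_U\cong h'_U|_V$ on a relatively compact neighborhood of $q$ in $Y$, which is automatic since both are smooth Riemannian metrics on the smooth manifold $Y$, and (ii) $g_{L_Y}\cong g'_{L_Y}$ on $reg(L_Y)$.

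Part (ii) is where the induction is used and constitutes the heart of the argument. Since $L_Y$ is a compact smoothly stratified pseudomanifold with Thom-Mather stratification of strictly smaller depth, the inductive hypothesis will give quasi-isometry provided both $g_{L_Y}$ and $g'_{L_Y}$ are quasi edge metrics on $reg(L_Y)$ sharing the same family of weights. For each stratum $Z'$ of $L_Y$, condition (3) of Definition \ref{thom} ensures that $Z'$ is the trace on $L_Y$ of a stratum $Z$ of $X$ with $Y\subsetneq\overline{Z}$, and that the control data $(\pi_Z,\rho_Z)$ on $X$ restrict to control data of $Z'$ in $L_Y$. The local product form of $g$ near $Z$ with weight $c_Z$ then induces, on the slice realizing $L_Y$ inside a trivializing chart, a local product form of $g_{L_Y}$ near $Z'$ with the same weight $c_Z$; the analogous statement holds for $g'_{L_Y}$, and the induced weights on $L_Y$ coincide because $g$ and $g'$ carry identical weights on $X$. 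The hardest part of the argument is exactly this compatibility verification between the trivializations at $Y$ and at the adjacent higher-depth strata $Z$; once it is in place, the inductive hypothesis gives (ii), and a finite subcover of $X$ converts the local quasi-isometries into a global one.
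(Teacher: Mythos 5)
There is a genuine gap, and it sits exactly where you placed your ``hardest part'' disclaimer. Your induction on $depth(X)$ requires applying the inductive hypothesis to $(reg(L_Y),g_{L_Y})$ and $(reg(L_Y),g'_{L_Y})$, and for that you need both induced link metrics to be quasi edge metrics with weights on $reg(L_Y)$, with matching weight families. But Definition \ref{zedge} gives you no such structure: for a quasi edge metric with weights, $g_{L_Y}$ is merely \emph{some} riemannian metric on $reg(L_Y)$, with no prescribed behaviour near the singular strata of $L_Y$. The recursive structure you are invoking is precisely what distinguishes Definition \ref{edge} (quasi rigid iterated edge metrics, where $g_{L_Y}$ is required by definition to be of the same type) from Definition \ref{zedge}, and Proposition \ref{tip} is stated for the weaker class. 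Your claim that the edge form of $g$ near an adjacent deeper stratum $Z$ ``induces'' an edge form of $g_{L_Y}$ near the corresponding stratum $Z'$ of $L_Y$ with weight $c_Z$ is plausible via the Thom-Mather compatibility $\pi_Y\circ\pi_Z=\pi_Y$, $\rho_Y\circ\pi_Z=\rho_Y$, but it is a nontrivial structure theorem about how the $Z$-trivializations interact with the product chart at $Y$ (including control of quasi-isometry constants as one moves along $Z$ toward $Y$), and you assert it rather than prove it. Since your entire inductive step rests on it, the proof as written is incomplete; deferring it with ``once it is in place'' leaves the core of the proposition unestablished.

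The paper's proof shows the claim you need can be bypassed entirely. Instead of recursing into the links, it iterates outward-in along the filtration $X\supset X_{n-1}\supset X_{n-2}\supset\dots$: quasi-isometry of $g$ and $g'$ is trivial on compact subsets $K\subset reg(X)$; then, for a stratum $Y\subset X_{n-1}-X_{n-2}$ and $x\in Y$, the slice $V_{Y,x}=\pi_Y^{-1}(x)\cap\rho_Y^{-1}(1)$ has $reg(V_{Y,x})$ contained in such a compact $K$, so $g$ and $g'$ are \emph{already known} to be quasi-isometric there. Restricting the two product-form quasi-isometries of Definition \ref{zedge} to this slice then forces $g_{L_Y}\cong g'_{L_Y}$ as bare riemannian metrics --- no edge structure on the links is needed --- and since the weights $c_Y$ agree, the conformal factors $r^{2c_Y}$ are identical, giving quasi-isometry on small tubes $\pi_Y^{-1}(U)$ and hence on compact subsets of $X-X_{n-2}$. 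Deeper strata are handled the same way, their slices now lying in compacts of the region already treated. So where you need the unproven statement ``induced link metrics are quasi edge with inherited weights,'' the paper extracts quasi-isometry of the pair $(g_{L_Y},g'_{L_Y})$ directly from the previously established shell, which is both weaker and sufficient. If you want to salvage your route, you would first have to prove your compatibility claim in full (in effect a version of the passage from Definition \ref{zedge} to the iterated structure underlying Corollary \ref{trucco}), which is at least as hard as Proposition \ref{tip} itself.
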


\begin{proof}
Let $K$ be a compact subset of $X$ such that $K\subset reg(X)$. Obviously $g|_{K}$ is quasi-isometric to $g'|_{K}$. Now let $Y$ be a stratum such that $Y\subset X_{n-1}-X_{n-2}$. Let $x\in Y$; consider $\pi_{Y}^{-1}(x)$ and let $V_{Y,x}:=\pi_{Y}^{-1}(x)\cap \rho_{Y}^{-1}(1)$. Then there exists a compact subset of $X,\ K$ such that $K\subset reg(X)$ and $reg(V_{Y,x})\subset K$. Therefore $g|_{reg(V_{Y,x})}$ is quasi-isometric to $g'|_{reg(V_{Y,x})}$ and from this it follows that, given an open neighbourhood $U$ of $x$ in $Y$ sufficiently small such that $\pi_{Y}^{-1}(U)\cong U\times C(L_{Y})$, $g|_{reg(\pi_{Y}^{-1}(U))}$ is quasi-isometric to $g'|_{reg(\pi_{Y}^{-1}(U))}$. This last assertion is a consequence of the fact that, by  definition \ref{zedge} and  remarks following it,  there is an isomorphism $\phi:\pi_{Y}^{-1}(U)\rightarrow U\times C(L_{Y})$ such that, by definition \ref{zedge}, $(\phi^{-1})^{*}(g|_{reg(\pi_{Y}^{-1}(U))})$ is quasi isometric to $h+dr^{2}+r^{2c_{Y}}g_{L_{Y}}$ and analogously $(\phi^{-1})^{*}(g'|_{reg(\pi_{Y}^{-1}(U))})$ is quasi isometric to $h'+dr^{2}+r^{2c_{Y}}g'_{L_{Y}}$. But from the fact that $g|_{reg(V_{Y,x})}$ is quasi-isometric to $g'|_{reg(V_{Y,x})}$ it follows that $g_{L_{Y}}$ is quasi-isometric to $g'_{L_{Y}}$ and therefore for a sufficiently small $U$ we get  $g|_{reg(\pi_{Y}^{-1}(U))}$ is quasi-isometric to $g'|_{reg(\pi_{Y}^{-1}(U))}$.
 So we can conclude that if $K\subset (X-X_{n-2})$ is a compact subset then $g|_{reg(K)}$ is quasi-isometric to $g'|_{reg(K)}$. Now consider a stratum  $Z\subset X_{n-2}-X_{n-3}$   and let $x\in Z$. As before consider $\pi_{Z}^{-1}(x)$ and let $V_{Z,x}=\pi_{Z}^{-1}(x)\cap \rho_{Z}^{-1}(1)$. Then there exists a compact subset $K\subset (X-X_{n-2})$ such that $V_{Z,x}\subset K$. From this it follows that $g|_{reg(V_{Z,x})}$ is quasi-isometric to $g'|_{reg(V_{Z,x})}$ and now, as before, we can conclude that given an open neighbourhood $U$ of $x$ in $Z$ sufficiently small such that $g|_{\pi_{Z}^{-1}(U)}\cong U\times C(L_{Z})$, $g|_{reg(\pi_{Z}^{-1}(U))}$ is quasi-isometric to $g'|_{reg(\pi_{Z}^{-1}(U))}$. As before from this it follows that if $K\subset (X-X_{n-3})$ is a compact subset then $g|_{reg(K)}$ is quasi-isometric to $g'|_{reg(K)}$. Now it is obvious that iterating this procedure we obtain what was asserted.
\end{proof}

\begin{cor} Let $X$ be a compact smoothly stratified pseudomanifold with a Thom-Mather stratification and let $g$ a quasi edge metric with weights on $reg(X)$. Then there exist $g'$, a \textbf{ quasi rigid iterated  edge metric with weights} on $reg(X)$, that is  quasi-isometric to $g$.
\label{trucco}
\end{cor}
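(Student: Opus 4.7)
The plan is to reduce the corollary immediately to the two preceding propositions. First, I would record the weights of $g$: for each stratum $Y$ in the Thom-Mather stratification $\mathfrak{X}$ of $X$, let $c_Y>0$ be the weight appearing in the local model of Definition \ref{zedge}. Then I would invoke Proposition \ref{top} with this prescribed collection $\{c_Y\}_{Y\in\mathfrak{X}}$ to obtain a quasi rigid iterated edge metric with weights $g'$ on $reg(X)$ whose weights agree with those of $g$ stratum by stratum.

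Next I would observe that every quasi rigid iterated edge metric with weights is, in particular, a quasi edge metric with weights with the same collection of weights. This is essentially a tautology from comparing Definition \ref{edge} with Definition \ref{zedge}: both demand that, near any stratum $Y$, the metric be quasi-isometric to a model of the form $dr\otimes dr+h_U+r^{2c_Y}g_{L_Y}$, and the only extra requirement in the iterated version is the recursive demand that the link metric $g_{L_Y}$ itself be a quasi rigid iterated edge metric with weights on $reg(L_Y)$. Dropping that recursive condition shows $g'$ satisfies Definition \ref{zedge} with the same weights $c_Y$.

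Having two quasi edge metrics with weights $g$ and $g'$ on $reg(X)$ sharing the same weights on a compact Thom-Mather stratified pseudomanifold, I would finally apply Proposition \ref{tip} directly to conclude that $g$ and $g'$ are quasi-isometric on $reg(X)$, which is precisely the statement of the corollary.

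There is essentially no technical obstacle here, since all the work has been pushed into Propositions \ref{top} and \ref{tip}. If I had to single out a delicate point, it would be the observation in the second step that the iterated definition really is just a refinement of the non-iterated one with identical weight data; but once Definitions \ref{zedge} and \ref{edge} are placed side by side this is visible at a glance, and the proof of the corollary amounts to a two-line chain of citations.
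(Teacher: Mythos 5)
Your proposal is correct and is exactly the argument the paper intends: the corollary is stated without proof precisely because it follows immediately from Proposition \ref{top} (existence of a quasi rigid iterated edge metric with the prescribed weights $\{c_Y\}_{Y\in\mathfrak{X}}$ of $g$) combined with Proposition \ref{tip} (any two quasi edge metrics with the same weights on a compact space are quasi-isometric), together with the tautological observation that Definition \ref{edge} is a special case of Definition \ref{zedge}. Nothing is missing from your two-line chain of citations.
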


We conclude this section  introducing the notion of  general perversity associated to a quasi edge  metric with weights.

\begin{defi} Let $X$ be a smoothly stratified pseudomanifold with a Thom-Mather stratification and let $g$ a quasi edge metric with weights on $reg(X)$. Then the general perversity $p_{g}$ associated to $g$ is:
\begin{equation}p_{g}(Y):= Y\longmapsto [[\frac{l_{Y}}{2}+\frac{1}{2c_{Y}}]]= \left\{
\begin{array}{lll}
0 & l_{Y}=0 \\
\frac{l_{Y}}{2}+[[\frac{1}{2c_{Y}}]] & l_{Y}\ even\ and\ l_{Y}\neq 0 \\
\frac{l_{Y}-1}{2}+[[\frac{1}{2}+\frac{1}{2c_{Y}}]]& l_{Y}\ odd
\end{array}
\right.
\end{equation}
where $l_{Y}=dimL_{Y}$ and, given any real and positive number $x$, $[[x]]$ is the greatest integer strictly less than $x$.
\label{pim}
\end{defi}

\section{Preliminary propositions}

 In this  section we follow, with some modifications, \cite{C}. Given an oriented riemannian manifold $(F,g)$ of dimension $f$, $C^{*}(F)$ will be the regular part of $C(F)$, that is $C(F)-\{v\}$, and $g_{c}$ will be the riemannian metric on $C^{*}(F)$ 
\begin{equation} 
\label{fcc}
g_{c}=dr\otimes dr+r^{2c}\pi^{*}g
\end{equation}
 where $\pi:C^{*}(F)\rightarrow F$ is the projection over $F$ and $c\in \mathbb{R},\ c>0$.\\ With the symbol $d_{F}:\Omega^{i}(C^{*}(F))\rightarrow \Omega^{i+1}(C^{*}(F))$ we mean the exterior differential obtained by ignoring the variable $r$. 

\begin{prop} Let $\phi\in L^{2}\Omega^{i}(F,g), \phi \neq 0$ and let $\pi:C^{*}(F)\longrightarrow F$ be the projection. Then $\pi^{*}(\phi)\in L^{2}\Omega^{i}(C^{*}(F),g_{c})$ if and only if $i<\frac{f}{2}+\frac{1}{2c}$. In this case the pullback map is also bounded.
\label{lop}
\end{prop}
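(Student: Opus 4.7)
The plan is to reduce the $L^2$ norm of $\pi^*\phi$ on $(C^*(F),g_c)$ to a product of the $L^2$ norm of $\phi$ on $(F,g)$ and an elementary integral in $r$, and then identify exactly when that integral converges.

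First I would record two pointwise computations. If $e_1,\dots,e_f$ is a local $g$-orthonormal coframe on $F$, then $r^{c}e_1,\dots,r^{c}e_f$ is pointwise $r^{2c}\pi^*g$-orthonormal on $C^*(F)$, so the dual basis of $i$-forms is scaled by $r^{-ci}$ in each slot. Hence, for any $\phi\in\Omega^i(F)$,
\begin{equation*}
|\pi^*\phi|^2_{g_c}(r,y) \;=\; r^{-2ci}\,|\phi|^2_g(y).
\end{equation*}
At the same time, the volume form of $g_c$ factorizes as
\begin{equation*}
\mathrm{dvol}_{g_c} \;=\; r^{cf}\,dr\wedge \pi^*\mathrm{dvol}_g.
\end{equation*}

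Next I would apply Fubini (legitimate since the integrand is nonnegative) to obtain
\begin{equation*}
\|\pi^*\phi\|^2_{L^2(C^*(F),g_c)} \;=\; \int_0^1\!\!\int_F r^{-2ci}|\phi|^2_g\, r^{cf}\, \mathrm{dvol}_g\, dr \;=\; \|\phi\|^2_{L^2(F,g)}\cdot \int_0^1 r^{c(f-2i)}\, dr.
\end{equation*}
Here I use the standard normalization $C(F)=F\times[0,1)/(F\times\{0\})$ so that $r$ ranges over $[0,1)$ (the argument is insensitive to replacing $1$ by any finite upper bound). The integral $\int_0^1 r^{c(f-2i)}\,dr$ is finite iff $c(f-2i)>-1$, equivalently $i<\tfrac{f}{2}+\tfrac{1}{2c}$.

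Finally I would draw the two conclusions. If $i<\tfrac{f}{2}+\tfrac{1}{2c}$, the identity above shows $\pi^*\phi\in L^2\Omega^i(C^*(F),g_c)$ with
\begin{equation*}
\|\pi^*\phi\|_{L^2(C^*(F),g_c)} \;=\; \sqrt{\tfrac{1}{c(f-2i)+1}}\,\|\phi\|_{L^2(F,g)},
\end{equation*}
so $\pi^*\colon L^2\Omega^i(F,g)\to L^2\Omega^i(C^*(F),g_c)$ is bounded. Conversely, if $i\ge \tfrac{f}{2}+\tfrac{1}{2c}$ and $\phi\neq 0$, then $\|\phi\|_{L^2(F,g)}>0$ while the radial integral diverges, so $\pi^*\phi\notin L^2\Omega^i(C^*(F),g_c)$. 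There is no real obstacle here; the only point requiring slight care is justifying the pointwise formula $|\pi^*\phi|^2_{g_c}=r^{-2ci}|\phi|^2_g\circ\pi$, which is immediate from the product structure of $g_c$ since $\pi^*\phi$ has no $dr$-component.
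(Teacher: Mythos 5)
Your proof is correct and follows essentially the same route as the paper: both reduce $\|\pi^*\phi\|^2_{L^2(C^*(F),g_c)}$ to the product $\|\phi\|^2_{L^2(F,g)}\int_0^1 r^{c(f-2i)}\,dr$ and observe that the radial integral converges precisely when $i<\frac{f}{2}+\frac{1}{2c}$, with boundedness following because the integral is independent of $\phi$. You merely make explicit the pointwise scaling $|\pi^*\phi|^2_{g_c}=r^{-2ci}|\phi|^2_g$, the factorization of the volume form, and the Fubini step, all of which the paper leaves implicit.
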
 

\begin{proof} If $\phi\in L^{2}\Omega^{i}(F,g)$ then $$\|\pi^{*}(\phi)\|_{L^{2}(C^{*}(F),g_{c})}^{2}=\int_{C^{*}(F)}\|\pi^{*}(\phi)\|_{C^{*}(F)}^{2}dvol_{C^{*}(F)}=\int_{0}^{1}\int_{F}r^{c(f-2i)}\|\phi\|_{F}^{2}dvol_{F}dr$$ $$=\|\phi\|_{L^{2}(F,g)}^{2}\int_{0}^{1}r^{c(f-2i)}dr<\infty$$
if and only if $i<\frac{f}{2}+\frac{1}{2c}$. Since $\int_{0}^{1}r^{c(f-2i)}dr$ is independent of $\phi$, the pullback map is bounded.
\end{proof} 

\begin{prop} There exists a constant $K>0$ such that for all $\alpha=\phi+dr\wedge \omega\in L^{2}\Omega^{i}(C^{*}(F),g_{c})$ and for any null set $S\subset (1/2,1)$ there is an $a\in (1/2,1)-S$ such that $$\|\phi(a)\|_{L^{2}(F,g)}^{2}\leq K\|\phi\|_{L^{2}(C^{*}(F),g_{c})}^{2}\leq K\|\alpha\|_{L^{2}(C^{*}(F),g_{c})}^{2}.$$
\label{yui}
\end{prop}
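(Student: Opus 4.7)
The plan is to reduce the problem to a one-variable mean-value argument via Fubini. The inequality $\|\phi\|_{L^{2}(C^{*}(F),g_{c})}^{2}\leq \|\alpha\|_{L^{2}(C^{*}(F),g_{c})}^{2}$ is immediate from the pointwise orthogonal decomposition: with respect to the metric $g_{c}=dr\otimes dr+r^{2c}\pi^{*}g$, the horizontal forms (those not containing $dr$) are orthogonal to the forms of the type $dr\wedge(\,\cdot\,)$, so $\|\alpha\|^{2}=\|\phi\|^{2}+\|dr\wedge\omega\|^{2}$ pointwise and hence in $L^{2}$.

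For the first inequality, I would compute $\|\phi\|_{L^{2}(C^{*}(F),g_{c})}^{2}$ as an iterated integral exactly as in the proof of Proposition \ref{lop}. Writing $h(r):=\|\phi(r)\|_{L^{2}(F,g)}^{2}$, where $\phi(r)$ denotes the restriction of $\phi$ to the slice $\{r\}\times F$, one gets
\begin{equation*}
\|\phi\|_{L^{2}(C^{*}(F),g_{c})}^{2}=\int_{0}^{1}r^{c(f-2i)}h(r)\,dr\;\geq\; M\int_{1/2}^{1}h(r)\,dr,
\end{equation*}
where $M:=\min_{r\in[1/2,1]}r^{c(f-2i)}>0$ (this is either $(1/2)^{c(f-2i)}$ or $1$, depending on the sign of the exponent).

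Now I would finish by a Chebyshev-type argument. Set $I:=\int_{1/2}^{1}h(r)\,dr$. The set $A:=\{r\in(1/2,1):h(r)\leq 4I\}$ has complement in $(1/2,1)$ of Lebesgue measure at most $1/4$, since $4I\cdot|(1/2,1)\setminus A|\leq \int_{(1/2,1)\setminus A}h(r)\,dr\leq I$. Hence $|A|\geq 1/4>0$, so for any null set $S\subset(1/2,1)$ the set $A\setminus S$ is nonempty; choose any $a\in A\setminus S$. Then
\begin{equation*}
\|\phi(a)\|_{L^{2}(F,g)}^{2}=h(a)\leq 4I\leq \frac{4}{M}\|\phi\|_{L^{2}(C^{*}(F),g_{c})}^{2}\leq \frac{4}{M}\|\alpha\|_{L^{2}(C^{*}(F),g_{c})}^{2},
\end{equation*}
so $K:=4/M$ works and is independent of $\alpha$.

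There is no real obstacle here; the only subtleties are (i) making sense of the slice $\phi(a)$ as an element of $L^{2}\Omega^{i}(F,g)$ for almost every $a$, which is automatic from Fubini applied to the integrable function $(r,x)\mapsto\|\phi\|_{C^{*}(F)}^{2}$ on $(0,1)\times F$, and (ii) verifying that the exponent $c(f-2i)$ is allowed to be negative (in which case $M=1$). The structure of the argument — orthogonal splitting, Fubini, uniform lower bound on $r^{c(f-2i)}$ on a compact interval bounded away from the cone tip, and Chebyshev to avoid the null set — is the same mean-value idea used in Cheeger's treatment in \cite{C}.
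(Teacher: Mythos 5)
Your proof is correct and rests on the same averaging idea as the paper's: the paper argues contrapositively, supposing that for every $K$ there is a form violating the bound on all of $(1/2,1)-S$ and then integrating the pointwise inequality $\|\phi(r)\|_{L^{2}(F,g)}^{2}>K\|\phi\|_{L^{2}(C^{*}(F),g_{c})}^{2}$ against the weight $r^{c(f-2i)}$ over $(1/2,1)-S$ to reach a contradiction, which is exactly your Chebyshev estimate run in reverse. Your direct version, with the uniform lower bound $M=\min_{r\in[1/2,1]}r^{c(f-2i)}>0$ and the explicit constant $K=4/M$, is just a cleaner write-up of the same mean-value argument.
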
 

\begin{proof} Suppose that this proposition is false. Then for any $K>0$ there is a form $\phi\in L^{2}\Omega^{i}(C^{*}(F),g_{c})$ such that $$\|\phi\|_{L^{2}(C^{*}(F),g_{c})}^{2}\geq \int_{1/2}^{1}\int_{F}r^{c(f-2i)}\|\phi\|_{F}^{2}dvol_{F}dr= \int_{1/2}^{1}r^{c(f-2i)}\|\phi(r)\|_{L^{2}(F,g)}^{2}dr >$$ $$ K\|\phi\|_{L^{2}(C^{*}(F),g_{c})}^{2}\int_{(1/2,1)-S}^{1}r^{c(f-2i)}dr=K\|\phi\|_{L^{2}(C^{*}(F),g_{c})}^{2}\int_{(1/2,1)}^{1}r^{c(f-2i)}dr.$$ In this way by choosing $K>(\int_{(1/2,1)}^{1}r^{c(f-2i)}dr)^{-1}$ we obtain a contradiction. 
\end{proof}  

\begin{prop} If $i<\frac{f}{2}+\frac{1}{2c}+1$ and $\alpha= \phi+dr\wedge \omega\in L^{2}\Omega^{i}(C^{*}(F),g_{c})$, then for any $a\in (1/2,1)$ $$K_{a}(\alpha)=\int_{a}^{r}\omega(s)ds\in L^{2}\Omega^{i-1}(C^{*}(F),g_{c})$$ and $K_{a}$ is a bounded operator uniformly in $a\in (1/2,1).$
\label{qaz}
\end{prop}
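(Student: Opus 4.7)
The plan is to compute $\|K_a(\alpha)\|_{L^2(C^*(F),g_c)}^2$ directly and reduce to a one‑variable weighted inequality on $(0,1)$. Since $K_a(\alpha)$ is an $(i-1)$-form pulled back from $F$ (containing no $dr$ factor), the same computation as in the proof of Proposition \ref{lop} gives
\begin{equation*}
\|K_a(\alpha)\|_{L^2(C^*(F),g_c)}^2 \;=\; \int_0^1 r^{\mu}\, \|K_a(\alpha)(r)\|_{L^2(F,g)}^2\, dr,
\qquad \mu := c(f-2(i-1)).
\end{equation*}
The hypothesis $i<\tfrac{f}{2}+\tfrac{1}{2c}+1$ is \emph{exactly} the condition $\mu>-1$, and the same computation shows $\|\alpha\|_{L^2(C^*(F),g_c)}^2 \geq \int_0^1 r^{\mu}\,\psi(r)^2\,dr$ where $\psi(s):=\|\omega(s)\|_{L^2(F,g)}$. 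So the whole proposition reduces to showing
\begin{equation*}
\int_0^1 r^{\mu}\left|\int_a^r \psi(s)\,ds\right|^2 dr \;\le\; C \int_0^1 s^{\mu}\psi(s)^2\,ds
\end{equation*}
with $C$ independent of $a\in(1/2,1)$ (the pointwise estimate $\|K_a(\alpha)(r)\|_{L^2(F,g)}\le\bigl|\int_a^r\psi(s)\,ds\bigr|$ follows from Minkowski's integral inequality).

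I would split the outer integral at $r=a$. On $(a,1)$ we have $r\ge 1/2$, so $r^\mu$ and $s^{-\mu}$ are uniformly bounded for $s\in(a,r)$; a single application of Cauchy--Schwarz in the form $(\int_a^r\psi)^2\le (r-a)\int_a^r\psi^2$ together with the bounded factor $\int_a^1 r^\mu\,dr$ yields the desired estimate on this piece, uniformly in $a$.

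The essential work is on the piece $(0,a)$, where we need a Hardy-type inequality
\begin{equation*}
\int_0^a r^\mu\, F(r)^2\,dr \;\le\; \frac{4}{(\mu+1)^2}\int_0^a r^{\mu+2}\,\psi(r)^2\,dr, \qquad F(r):=\int_r^a\psi(s)\,ds.
\end{equation*}
I would prove this by integration by parts on $(\epsilon,a)$: since $F(a)=0$ and, for $\mu>-1$, the boundary term at $\epsilon$ equals $-\tfrac{\epsilon^{\mu+1}}{\mu+1}F(\epsilon)^2 \le 0$, we obtain
\begin{equation*}
\int_\epsilon^a r^\mu F(r)^2\,dr \;\le\; \frac{2}{\mu+1}\int_\epsilon^a r^{\mu+1}\,\psi(r)\,F(r)\,dr,
\end{equation*}
and then Cauchy--Schwarz on the right hand side and absorbing $(\int_\epsilon^a r^\mu F^2)^{1/2}$ on the left yields the claim after letting $\epsilon\to 0$. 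Since $r^{\mu+2}\le r^\mu$ on $(0,1)$, the right hand side is bounded by $\tfrac{4}{(\mu+1)^2}\int_0^1 s^\mu\psi(s)^2 ds$, giving a constant independent of $a$.

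The main obstacle is the boundary term at $0$ in the Hardy step: it is crucial that $\mu>-1$ (the precise hypothesis on $i$) so that $\epsilon^{\mu+1}\to 0$ and, more importantly, so that the boundary term has the favorable sign allowing it to be dropped without an a priori smallness assumption on $F(\epsilon)$. Combining the two pieces produces a bound $\|K_a(\alpha)\|_{L^2(C^*(F),g_c)}^2 \le C\,\|\alpha\|_{L^2(C^*(F),g_c)}^2$ with $C$ depending only on $\mu$, $f$, $c$, proving both membership in $L^2\Omega^{i-1}(C^*(F),g_c)$ and uniform boundedness of $K_a$ in $a\in(1/2,1)$.
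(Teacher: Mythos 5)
Your argument is correct, and it is genuinely different from the paper's. Common ground: both proofs reduce the claim to a one-variable estimate with weight $r^{\mu}$, $\mu=c(f-2i+2)$, and both use the hypothesis $i<\frac{f}{2}+\frac{1}{2c}+1$ exactly in the form $\mu>-1$. The paper, however, never splits the outer integral at $r=a$ and uses no Hardy inequality: after Cauchy--Schwarz it bounds $\bigl(\int_a^r\|\omega(s)\|_F\,ds\bigr)^2\le(1-a)\int_a^1\|\omega(s)\|_F^2\,ds$, reinstates the weight by writing $1=s^{\mu}+(1-s^{\mu})$ and invoking the elementary comparison $1-s^{\mu}\le l\,s^{\mu}$, valid on $(1/2,1]$ (the only place where $a>1/2$ enters), and then integrates out $r$ using $\int_0^1 r^{\mu}\,dr=\frac{1}{1+\mu}<\infty$, arriving at the explicit uniform constant $\frac{1}{2}\frac{1+l}{1+\mu}$. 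Your route --- splitting at $r=a$, treating $(a,1)$ by comparability of the weights on $(1/2,1)$, and treating $(0,a)$ by the weighted Hardy inequality with constant $\frac{4}{(\mu+1)^2}$, proved by integration by parts where the favorable sign of the boundary term at $\epsilon$ (for $\mu>-1$) licenses the absorption --- buys genuine robustness: for $r<a$ the interval of integration $(r,a)$ runs down toward the cone tip, where the paper's displayed chain (whose $s$-integrals all live inside $(a,1)$) does not literally apply, and where the comparison $1-s^{\mu}\le l\,s^{\mu}$ fails for $s$ near $0$ when $\mu>0$; your Hardy step (or, equivalently, a Fubini computation using $\int_0^s r^{\mu}\,dr=\frac{s^{\mu+1}}{\mu+1}$) is precisely what closes that region, so on this point your write-up is tighter than the paper's. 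What the paper's approach buys in exchange is brevity: a single chain of elementary inequalities, no integration by parts or absorption, with every constant explicit. Both arguments deliver $\|K_a(\alpha)\|^2_{L^2(C^*(F),g_c)}\le C\,\|\alpha\|^2_{L^2(C^*(F),g_c)}$ with $C$ depending only on $\mu$ (hence on $f$, $i$, $c$) and not on $a\in(1/2,1)$, as required.
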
 

\begin{proof} By definition $$\|K_{a}(\alpha)\|_{L^{2}(C^{*}(F),g_{c})}^{2}=\|\int_{a}^{r}\omega(s)ds\|_{L^{2}(C^{*}(F),g_{c})}^{2}=\int_{0}^{1}\int_{F}\|\int_{a}^{r}\omega(s)ds\|_{F}^{2}r^{c(f-2i+2)}dvol_{F}dr.$$
We consider the term $\|\int_{a}^{r}\omega(s)ds\|_{F}^{2}$. 
The following inequality holds :$$\|\int_{a}^{r}\omega(s)ds\|_{F}^{2}\leq(\int_{a}^{r}\|\omega(s)ds\|_{F})^{2}$$ and  using the Schwartz inequalities the right side of this becomes:$$(\int_{a}^{r}\|\omega(s)\|_{F}ds)^{2}\leq \int_{a}^{r}ds\int_{a}^{r}\|\omega(s)\|_{F}^{2}ds$$  $$\leq\int_{a}^{1}ds\int_{a}^{r}\|\omega(s)\|_{F}^{2}ds=(1-a)\int_{a}^{r}\|\omega(s)\|_{F}^{2}ds \leq (1-a)\int_{a}^{1}\|\omega(s)\|_{F}^{2}ds.$$ So we have obtained that $$\|K_{a}(\alpha)\|_{L^{2}(C^{*}(F),g_{c})}^{2}\leq (1-a)\int_{0}^{1}\int_{F}\int_{a}^{1}\|\omega(s)\|_{F}^{2}dsr^{c(f-2i+2)}dvol_{F}dr.$$

 Now consider the term $\int_{0}^{1}\int_{F}\int_{a}^{1}\|\omega(s)\|_{F}^{2}dsr^{c(f-2i+2)}dvol_{F}dr$ $$=\int_{0}^{1}\int_{F}\int_{a}^{1}\|\omega(s)\|_{F}^{2}(s^{c(f-2i+2)}+1-s^{c(f-2i+2)})dsr^{c(f-2i+2)}dvol_{F}dr.$$  We can bound the term $\int_{a}^{1}\|\omega(s)\|_{F}^{2}s^{c(f-2i+2)}ds$ in the following way $$\int_{a}^{1}\|\omega(s)\|_{F}^{2}s^{c(f-2i+2)}ds\leq \int_{0}^{1}\|\omega(s)\|_{F}^{2}s^{c(f-2i+2)}ds$$ and therefore $$\int_{F}\int_{a}^{1}\|\omega(s)\|_{F}^{2}s^{c(f-2i+2)}dsdvol_{F}\leq\int_{F}\int_{0}^{1}\|\omega(s)\|_{F}^{2}s^{c(f-2i+2)}dsdvol_{F}=\|\omega\|_{L^{2}(C^{*}(F),g_{c})}^{2}$$ while for the term $\int_{a}^{1}\|\omega(s)\|_{F}^{2}(1-s^{c(f-2i+2)})ds$ we can use the following observation:  there exist 
 $l>0$ such that $1-s^{c(f-2i+2)}\leq |1-s^{c(f-2i+2)}| \leq ls^{c(f-2i+2)}$ for any $s\in (\frac{1}{2},1]$. 
  Therefore: $$\int_{a}^{1}\|\omega(s)\|_{F}^{2}(1-s^{c(f-2i+2)})ds\leq \int_{a}^{1}\|\omega(s)\|_{F}^{2}|(1-s^{c(f-2i+2)})|ds\leq l\int_{a}^{1}\|\omega(s)\|_{F}^{2}s^{c(f-2i+2)}ds\leq$$ $$l\int_{0}^{1}\|\omega(s)\|_{F}^{2}s^{c(f-2i+2)}ds$$ 
  and similarly to the previous case we get $$\int_{F}\int_{a}^{1}\|\omega(s)\|_{F}^{2}(1-s^{c(f-2i+2)})dsdvol_{F}\leq l\|\omega\|_{L^{2}(C^{*}(F), g_{c})}^{2}$$ and the constant $l$ is independent of the choice of the form $\omega$ and of the choice of $a$. The fact that $i<\frac{f}{2}+\frac{1}{2c}+1$ implies that $\int_{0}^{1}r^{c(f-2i+2)}dr=\frac{1}{1+c(f-2i+2)}<\infty$ and so the following inequalities hold: $$\|K_{a}(\alpha)\|_{L^{2}(C^{*}(F),g_{c})}^{2}\leq (1-a)\int_{0}^{1}\int_{F}\int_{a}^{1}\|\omega(s)\|_{F}^{2}dsr^{c(f-2i+2)}dvol_{F}dr$$ $$\leq \int_{0}^{1}r^{c(f-2i+2)}dr(1-a)(1+l)\|\omega\|^{2}_{L^{2}(C^{*}(F,g_{c}))}\leq \frac{1}{2}\frac{1+l}{1+c(f-2i+2)}\|\alpha\|_{L^{2}(C^{*}(F),g_{c})}^{2}.$$ Therefore we can conclude that for $i<\frac{f}{2}+\frac{1}{2c}+1$ $$K_{a}:L^{2}\Omega^{i}(C^{*}(F),g_{c})\longrightarrow L^{2}\Omega^{i-1}(C^{*}(F),g_{c})$$ is a bounded operator uniformly in $a\in (\frac{1}{2},1).$ 
\end{proof} 

\begin{prop} Let $0<\rho<1$ and endow $(\rho,1)\times F$ with the metric $g_{c}$ restricted from $C^{*}(F)$. Let $\alpha=\phi+dr\wedge \omega \in L^{2}\Omega^{i}(C^{*}(F),g_{c}).$ If $i\geq \frac{f}{2}+\frac{1}{2c}$ then there exists a sequences $\epsilon_{s}\rightarrow 0$ such that $$\lim_{\epsilon_{s}\rightarrow 0}\|\phi(\epsilon_{s})\|_{L^{2}((\rho,1)\times F, g_{c})}^{2}=0$$
\label{frau}
\end{prop}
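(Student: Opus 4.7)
The plan is to reduce the claim to a simple measure-theoretic extraction on the radial interval $(0,1)$, via the usual Fubini decomposition of the $L^{2}$ norm on the cone.

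First I would unpack $\|\alpha\|^{2}_{L^{2}(C^{*}(F),g_{c})}$ in polar form. Writing $\alpha=\phi+dr\wedge\omega$ with $\phi(r)\in\Omega^{i}(F)$ and $\omega(r)\in\Omega^{i-1}(F)$ for a.e.\ $r$, the product structure of $g_{c}=dr\otimes dr+r^{2c}\pi^{*}g$ (volume form $r^{cf}\,dr\,dvol_{F}$, and a factor $r^{-2ck}$ for the pointwise $g_{c}$-norm of a horizontal $k$-form) gives
\begin{equation*}
\|\alpha\|^{2}_{L^{2}(C^{*}(F),g_{c})}
=\int_{0}^{1} r^{c(f-2i)}\,\|\phi(r)\|_{L^{2}(F,g)}^{2}\,dr
+\int_{0}^{1} r^{c(f-2i+2)}\,\|\omega(r)\|_{L^{2}(F,g)}^{2}\,dr.
\end{equation*}
In particular, setting $h(r):=\|\phi(r)\|_{L^{2}(F,g)}^{2}$ (defined a.e.\ in $r$ by Fubini), the first integral is finite.

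Next I would exploit the hypothesis $i\geq \frac{f}{2}+\frac{1}{2c}$, which is exactly $c(f-2i)\leq -1$, so that $r^{c(f-2i)}\geq r^{-1}$ on $(0,1)$. Therefore $\int_{0}^{1} r^{-1}h(r)\,dr<\infty$. I now claim the essential lower limit of $h$ at $0$ vanishes: otherwise there would exist $\varepsilon>0$ and $\delta>0$ with $h(r)\geq\varepsilon$ for almost every $r\in(0,\delta)$, giving $\int_{0}^{\delta}r^{-1}h(r)\,dr\geq \varepsilon\int_{0}^{\delta}r^{-1}dr=\infty$, a contradiction. Consequently one can extract a sequence $\varepsilon_{s}\to 0$ along which $h(\varepsilon_{s})\to 0$.

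Finally I would convert this back to the norm in the statement. Interpreting $\phi(\varepsilon_{s})$ as a form on $F$ pulled back to $(\rho,1)\times F$ via projection, the computation of Proposition \ref{lop} applied to the slab $(\rho,1)\times F$ rather than the whole cone yields
\begin{equation*}
\|\phi(\varepsilon_{s})\|^{2}_{L^{2}((\rho,1)\times F,\,g_{c})}
=\Bigl(\int_{\rho}^{1} r^{c(f-2i)}\,dr\Bigr)\,h(\varepsilon_{s}),
\end{equation*}
and the prefactor is a finite constant because $\rho>0$ (so no singularity at $r=0$ is encountered in this integral). Hence the left-hand side tends to zero along $\varepsilon_{s}$, as required. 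The only non-routine step is the extraction argument in the middle paragraph, which hinges on the divergence of $\int_{0}^{\delta} r^{-1}dr$; everything else is a Fubini bookkeeping.
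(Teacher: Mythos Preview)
Your argument is correct. It differs from the paper's proof in the extraction step: the paper invokes Cheeger's Lemma~1.2 (from \cite{C}), which says that for any $g\in L^{1}(0,1)$ one can find $\epsilon_{s}\to 0$ with $|g(\epsilon_{s})|<\dfrac{C}{\epsilon_{s}\,|\ln\epsilon_{s}|}$, applied to $g(r)=r^{c(f-2i)}\|\phi(r)\|_{L^{2}(F,g)}^{2}$; dividing by $\epsilon_{s}^{c(f-2i)}$ and using $c(f-2i)\leq -1$ then forces $\|\phi(\epsilon_{s})\|_{L^{2}(F,g)}^{2}\to 0$. Your route is more elementary: you observe directly that $\int_{0}^{1}r^{-1}h(r)\,dr<\infty$ together with the divergence of $\int_{0}^{\delta}r^{-1}\,dr$ forces the essential liminf of $h$ at $0$ to vanish, and then extract a sequence. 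The paper's approach yields a quantitative decay rate along the subsequence, but this extra information is never used downstream (Propositions~\ref{bei} and the proof of Theorem~\ref{lll} only need the qualitative convergence), so your simpler argument suffices for all purposes in the paper.
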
 

\begin{proof} By the fact that $\alpha \in L^{2}\Omega^{i}(C^{*}(F),g_{c})$ follows that $\phi\in L^{2}\Omega^{i}(C^{*}(F),g_{c})$, so we know that $\int_{0}^{1}\int_{F}\|\phi(r)\|_{F}^{2}r^{c(f-2i)}dvol_{F}dr<\infty$. This means that $$\int_{F}\|\phi(r)\|_{F}^{2}r^{c(f-2i)}dvol_{F}\in L^{1}(0,1).$$ Thus by $\cite{C}$ lemma 1.2 there is a sequences $\epsilon_{s}\rightarrow 0$ for wich $$\left|\int_{F}\|\phi(\epsilon_{s})\|_{F}^{2}\epsilon_{s}^{c(f-2i)}dvol_{F}\right|<\frac{C}{\epsilon_{s}|ln(\epsilon_{s})|}$$ for some constant $C>0$. In this way we obtain $$\left|\int_{F}\|\phi(\epsilon_{s})\|_{F}^{2}dvol_{F}\right|<\frac{C\epsilon_{s}^{c(f-2i)-1}}{|ln(\epsilon_{s})|}.$$ Since $i\geq \frac{f}{2}+\frac{1}{2c}$ the right side tends to zero as $\epsilon_{s}\rightarrow 0.$ Thus we obtain: $$\|\phi(\epsilon_{s})\|_{L^{2}((\rho,1)\times F, g_{c})}^{2}=\int_{\rho}^{1}\int_{F}\|\phi(\epsilon_{s})\|_{F}^{2}\epsilon_{s}^{c(f-2i)}dvol_{F}dr$$ $$=\|\phi(\epsilon_{s})\|_{L^{2}( F,g)}^{2}\int_{\rho}^{1}r^{c(f-2i)}dr\longrightarrow 0$$ when $\epsilon_{s}\rightarrow 0.$ 
\end{proof} 

\begin{prop} If $i>\frac{f}{2}-\frac{1}{2c}+1$ and $\alpha= \phi+dr\wedge \omega\in L^{2}\Omega^{i}(C^{*}(F),g_{c})$, then $$K_{0}(\alpha)=\int_{0}^{r}\omega(s)ds\in L^{2}\Omega^{i-1}(C^{*}(F),g_{c})$$ and $K_{0}:L^{2}\Omega^{i}(C^{*}(F),g_{c})\longrightarrow L^{2}\Omega^{i-1}(C^{*}(F),g_{c})$ is a bounded operator.
\label{bau}
\end{prop}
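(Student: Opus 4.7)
The plan is to mimic the structure of the proof of proposition \ref{qaz}, but to replace the elementary bound $(\int_a^r \|\omega(s)\|_F ds)^2 \leq (1-a)\int_a^1 \|\omega(s)\|_F^2 ds$ (which was obtained by Cauchy--Schwarz with the weight $ds$) by a weighted Cauchy--Schwarz whose weight matches that of the $L^2(C^*(F),g_c)$ norm. This is essentially a Hardy-type inequality and it is where the hypothesis $i>\frac{f}{2}-\frac{1}{2c}+1$ enters.

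\medskip

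Concretely, set $a:=c(f-2i+2)$, so that
\[
\|K_{0}(\alpha)\|_{L^{2}(C^{*}(F),g_{c})}^{2}=\int_{0}^{1}\!\!\int_{F}\Bigl\|\int_{0}^{r}\omega(s)\,ds\Bigr\|_{F}^{2}r^{a}\,dvol_{F}\,dr.
\]
First, by Minkowski's integral inequality the inner norm is bounded by $\int_{0}^{r}\|\omega(s)\|_{F}\,ds$. I then apply Cauchy--Schwarz with the splitting $\|\omega(s)\|_{F}=(\|\omega(s)\|_{F}s^{a/2})\cdot s^{-a/2}$, obtaining
\[
\Bigl(\int_{0}^{r}\|\omega(s)\|_{F}\,ds\Bigr)^{2}\leq\Bigl(\int_{0}^{r}s^{-a}\,ds\Bigr)\Bigl(\int_{0}^{r}\|\omega(s)\|_{F}^{2}s^{a}\,ds\Bigr).
\]
The first factor is finite at $s=0$ precisely when $a<1$, which is exactly the hypothesis $i>\frac{f}{2}-\frac{1}{2c}+1$; in that case it equals $\frac{r^{1-a}}{1-a}$.

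\medskip

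Plugging this back and absorbing the factor $r^{1-a}\cdot r^{a}=r$, I then swap the order of integration by Fubini:
\[
\int_{0}^{1}r\int_{0}^{r}\|\omega(s)\|_{F}^{2}s^{a}\,ds\,dr=\int_{0}^{1}\|\omega(s)\|_{F}^{2}s^{a}\Bigl(\int_{s}^{1}r\,dr\Bigr)ds\leq\tfrac{1}{2}\int_{0}^{1}\|\omega(s)\|_{F}^{2}s^{a}\,ds.
\]
Integrating over $F$ and recognizing the right-hand side as $\tfrac{1}{2}\|\omega\|_{L^{2}(C^{*}(F),g_{c})}^{2}\leq\tfrac{1}{2}\|\alpha\|_{L^{2}(C^{*}(F),g_{c})}^{2}$, I conclude $\|K_{0}(\alpha)\|_{L^{2}(C^{*}(F),g_{c})}^{2}\leq\frac{1}{2(1-a)}\|\alpha\|_{L^{2}(C^{*}(F),g_{c})}^{2}$, which gives both $K_{0}(\alpha)\in L^{2}\Omega^{i-1}(C^{*}(F),g_{c})$ and the operator bound.

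\medskip

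The only subtle step is the use of Minkowski's integral inequality to move the $F$-norm inside the integral in $s$; everything else is one weighted Cauchy--Schwarz and one application of Fubini. The main obstacle compared to proposition \ref{qaz} is precisely that the lower limit is $0$ rather than $a\in(1/2,1)$, which forces the weighted (rather than unweighted) version of Cauchy--Schwarz and thereby brings in the sharp integrability condition $a<1$, i.e.\ the stated hypothesis on $i$.
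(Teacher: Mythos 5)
Your proof is correct and is essentially the paper's own argument: your weighted Cauchy--Schwarz splitting $\|\omega(s)\|_{F}=(s^{a/2}\|\omega(s)\|_{F})\cdot s^{-a/2}$ is exactly the paper's factorization $s^{\frac{c}{2}(f-2i+2)}s^{\frac{c}{2}(2i-f-2)}\|\omega(s)\|_{F}$, with the hypothesis $i>\frac{f}{2}-\frac{1}{2c}+1$ entering at the same point (finiteness of $\int_{0}^{r}s^{-a}\,ds=\frac{r^{1-a}}{1-a}$) and producing the same constant $\frac{1}{2(1-a)}=\frac{1}{2+2c(2i-f-2)}$. The only cosmetic difference is that you finish with Fubini, whereas the paper simply bounds $\int_{0}^{r}s^{a}\|\omega(s)\|_{F}^{2}\,ds$ by $\int_{0}^{1}s^{a}\|\omega(s)\|_{F}^{2}\,ds$ and then integrates out $r$.
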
 

\begin{proof} By definition $$\|K_{0}(\alpha)\|_{L^{2}(C^{*}(F),g_{c})}^{2}=\int_{0}^{1}\int_{F}\|\int_{0}^{r}\omega(s)ds\|_{F}^{2}r^{c(f-2i+2)}dvol_{F}dr.$$
We consider the term $\|\int_{0}^{r}\omega(s)ds\|_{F}^{2}.$ Then:$$\|\int_{0}^{r}\omega(s)ds\|_{F}^{2}\leq (\int_{0}^{r}\|\omega(s)\|_{F}ds)^{2}=(\int_{0}^{r}s^{\frac{c}{2}(f-2i+2)}s^{\frac{c}{2}(2i-f-2)}\|\omega(s)\|_{F}ds)^{2}$$ and applying the Schwartz inequality we get that  $$\leq\int_{0}^{r}s^{c(2i-f-2)}ds\int_{0}^{r}s^{c(f-2i+2)}\|\omega(s)\|_{F}^{2}ds=\frac{r^{1+c(f-2i+2)}}{1+c(f-2i+2)}\int_{0}^{r}s^{c(f-2i+2)}\|\omega(s)\|_{F}^{2}ds.$$The last equality is a consequence of the fact that $i>\frac{f}{2}-\frac{1}{2c}+1$. Substituting the previous inequality in the definition of $\|K_{0}(\alpha)\|_{L^{2}(C^{*}(F),g_{c})}^{2}$ we get: $$\|K_{0}(\alpha)\|_{L^{2}(C^{*}(F),g_{c})}^{2}\leq\int_{0}^{1}\int_{F}\int_{0}^{1}s^{c(2i-f-2)}ds\int_{0}^{r}s^{c(f-2i+2)}\|\omega(s)\|_{F}^{2}dsdvol_{F}r^{c(f-2i+2)}dr$$ $$\leq\int_{0}^{1}\frac{r}{1+c(2i-f-2)}dr\int_{F}\int_{0}^{1}s^{c(f-2i+2)}\|\omega(s)\|_{F}^{2}dsdvol_{F}$$ $$=\frac{1}{2+2c(2i-f-2)}\|\omega\|_{L^{2}(C^{*}(F),g_{c})}^{2}\leq\frac{1}{2+2c(2i-f-2)}\|\alpha\|_{L^{2}(C^{*}(F),g_{c})}.$$ Thus $$K_{0}:L^{2}\Omega^{i}(C^{*}(F),g_{c})\longrightarrow L^{2}\Omega^{i-1}(C^{*}(F),g_{c})$$ is a bounded operator. 
\end{proof} 

\begin{prop} Let $$K_{\epsilon}(\alpha)=\int_{\epsilon}^{r}\omega(s)ds$$ and let $0<\rho<1$. If $i>\frac{f}{2}-\frac{1}{2c}+1$ then on $(\rho,1)\times F$ with the restricted metric $g_{c}$, $$K_{\epsilon}(\alpha)\longrightarrow K_{0}(\alpha)$$ in the $\|\ \|_{L^{2}((\rho,1)\times F,g_{c})}$ norm when $\epsilon\rightarrow 0$.
\label{miao}
\end{prop}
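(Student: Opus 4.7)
The key observation I would exploit is that the difference $K_{\epsilon}(\alpha) - K_0(\alpha) = -\int_0^{\epsilon}\omega(s)\,ds$ is a form on $(\rho,1)\times F$ that does not depend on the radial variable $r$. So the whole problem reduces to estimating the $L^2$ norm of this constant-in-$r$ form. Since the form has degree $i-1$ and contains no $dr$ factor, its squared $L^2$-norm on $(\rho,1)\times F$ with respect to $g_c$ factors as
\begin{equation*}
\bigl\|K_{\epsilon}(\alpha)-K_0(\alpha)\bigr\|_{L^2((\rho,1)\times F,g_c)}^{2} \;=\; \Bigl(\int_{\rho}^{1} r^{c(f-2i+2)}\,dr\Bigr)\int_{F}\Bigl\|\int_0^{\epsilon}\omega(s)\,ds\Bigr\|_{F}^{2}\,dvol_{F}.
\end{equation*}
The first factor is a finite constant depending only on $\rho,c,f,i$ (this is exactly where using $\rho>0$ matters — we do \emph{not} need integrability at zero here, so the weight issue of the previous proposition is circumvented).

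For the second factor I would repeat verbatim the Cauchy--Schwarz splitting used in the proof of Proposition \ref{bau}: writing $\omega(s) = s^{\tfrac{c}{2}(2i-f-2)}\cdot s^{\tfrac{c}{2}(f-2i+2)}\omega(s)$ and applying Cauchy--Schwarz on $[0,\epsilon]$ gives
\begin{equation*}
\Bigl\|\int_0^{\epsilon}\omega(s)\,ds\Bigr\|_{F}^{2} \;\leq\; \frac{\epsilon^{1+c(2i-f-2)}}{1+c(2i-f-2)}\int_0^{\epsilon} s^{c(f-2i+2)}\|\omega(s)\|_{F}^{2}\,ds.
\end{equation*}
Under the hypothesis $i>\tfrac{f}{2}-\tfrac{1}{2c}+1$ the exponent $1+c(2i-f-2)$ is strictly positive, so the prefactor $\epsilon^{1+c(2i-f-2)}$ tends to $0$ as $\epsilon\to 0$. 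Integrating over $F$ and applying Fubini, the remaining integral is bounded by $\|\omega\|_{L^2(C^*(F),g_c)}^{2}\leq\|\alpha\|_{L^2(C^*(F),g_c)}^{2}$, which is finite by assumption.

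Combining these two steps yields
\begin{equation*}
\bigl\|K_{\epsilon}(\alpha)-K_0(\alpha)\bigr\|_{L^2((\rho,1)\times F,g_c)}^{2} \;\leq\; C(\rho,c,f,i)\,\epsilon^{1+c(2i-f-2)}\,\|\alpha\|_{L^2(C^*(F),g_c)}^{2}\;\longrightarrow\;0
\end{equation*}
as $\epsilon\to 0$, which is the claim. There is really no obstacle here: the proof is a mechanical reuse of the estimates from Proposition \ref{bau}, and the only subtlety worth flagging is that the integration in $r$ is over the bounded interval $(\rho,1)$ (not $(0,1)$), which is precisely what allows the argument to go through regardless of the behavior of $r^{c(f-2i+2)}$ at the origin.
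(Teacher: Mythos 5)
Your proof is correct and follows essentially the same route as the paper: the paper also writes $\|K_{\epsilon}(\alpha)-K_{0}(\alpha)\|^{2}_{L^{2}((\rho,1)\times F,g_{c})}$ as the weighted integral of $\|\int_{0}^{\epsilon}\omega(s)\,ds\|_{F}^{2}$ and then reuses the Cauchy--Schwarz splitting of Proposition \ref{bau} to obtain the bound $\frac{\epsilon^{1+c(2i-f-2)}}{1+c(2i-f-2)}\bigl(\int_{\rho}^{1}r^{c(f-2i+2)}\,dr\bigr)\|\omega\|^{2}_{L^{2}(C^{*}(F),g_{c})}$, which vanishes as $\epsilon\rightarrow 0$ under the hypothesis on $i$. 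Your explicit factoring of the $r$-integral and the remark that $\rho>0$ makes it finite irrespective of the sign of $c(f-2i+2)$ merely spell out what the paper leaves implicit.
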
 

\begin{proof} We have $$\|K_{\epsilon}(\alpha)-K_{0}(\alpha)\|=\int_{\rho}^{1}\int_{F}\|\int_{0}^{\epsilon}\omega(s)ds\|_{F}^{2}r^{c(f-2i+2)}dvol_{F}dr.$$Using the same techniques of the previous proof we obtain that the right hand side is at most $$\frac{\epsilon^{1+c(2i-f-2)}}{1+c(2i-f-2)}(\int_{\rho}^{1}r^{c(f-2i+2)}dr)\|\omega\|_{L^{2}(C^{*}(F),g_{c})}^{2}.$$ Since $i>\frac{f}{2}-\frac{1}{2c}+1$ the whole expression tends to $0$ as $\epsilon \rightarrow 0$.
\end{proof} 

\begin{prop} Let $(F,g)$ be an oriented riemannian manifold. Let $\phi\in \mathcal{D}(d_{max,i-1})\subset L^{2}\Omega^{i-1}(F,g)$, $\eta\in L^{2}\Omega^{i}(F,g)$ such that $d_{max,i-1}\phi=\eta$. Then for all $\rho\in (0,1)$ on $(\rho,1)\times F$ with the restricted metric $g_{c}$: 
\begin{enumerate}
\item $\pi^{*}\phi\in L^{2}\Omega^{i-1}((\rho,1)\times F)$
\item $\pi^{*}\eta\in L^{2}\Omega^{i}((\rho,1)\times F)$
\item  For all $\beta\in C_{0}^{\infty}\Omega^{i}((\rho,1)\times F)$ we have $$<\pi^{*}\phi,\delta_{i-1}\beta>_{L^{2}((\rho,1)\times F)}=<\pi^{*}\eta,\beta>_{L^{2}((\rho,1)\times F)}$$ that is on $(\rho,1)\times F$ with the restricted metric $g_{c}$ $$d_{max,i-1}\pi^{*}\phi=\pi^{*}\eta$$.
\end{enumerate}
\label{mr}
\end{prop}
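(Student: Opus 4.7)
The plan is to establish parts (1) and (2) by a direct Fubini-style computation using the warped-product structure of $g_{c}$, and to establish (3) by smoothing $\phi$ locally on $F$ and passing to the limit.

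For (1) and (2), I would unpack the metric $g_{c}=dr\otimes dr+r^{2c}\pi^{*}g$ and use the standard fact that for a $k$-form $\alpha$ on $F$ the pointwise norm satisfies $\|\pi^{*}\alpha\|^{2}_{g_{c}}|_{(r,x)}=r^{-2ck}\|\alpha\|^{2}_{g}|_{x}$, while $dV_{g_{c}}=r^{cf}\,dr\wedge dV_{g}$. This yields
$$\|\pi^{*}\phi\|^{2}_{L^{2}((\rho,1)\times F,g_{c})}=\|\phi\|^{2}_{L^{2}(F,g)}\int_{\rho}^{1}r^{c(f-2i+2)}\,dr,$$
with an analogous formula for $\|\pi^{*}\eta\|^{2}_{L^{2}((\rho,1)\times F,g_{c})}$. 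Both integrals are finite because $\rho>0$ removes the singularity at the cone point that would otherwise force the degree restriction of proposition \ref{lop}.

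For (3), fix $\beta\in C_{0}^{\infty}\Omega^{i}((\rho,1)\times F)$ and let $K\subset F$ be the (compact) projection of $\mathrm{supp}\,\beta$ to $F$; choose a relatively compact open neighbourhood $U$ of $K$. By the classical Friedrichs smoothing theorem for forms with $L^{2}$ exterior derivative (applied in local charts on $U$ and patched by a partition of unity), there exists a sequence $\phi_{k}\in C^{\infty}\Omega^{i-1}(F)$ with $\phi_{k}\to\phi$ in $L^{2}(U,g)$ and $d\phi_{k}\to\eta$ in $L^{2}(U,g)$. For each smooth $\phi_{k}$, the pullback $\pi^{*}\phi_{k}$ is smooth on $(\rho,1)\times F$, the elementary identity $d(\pi^{*}\phi_{k})=\pi^{*}(d\phi_{k})$ holds, and since $\beta$ has compact support in the interior of $(\rho,1)\times F$, classical integration by parts gives
$$\langle\pi^{*}\phi_{k},\delta_{i-1}\beta\rangle_{L^{2}((\rho,1)\times F,g_{c})}=\langle\pi^{*}(d\phi_{k}),\beta\rangle_{L^{2}((\rho,1)\times F,g_{c})}.$$

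To conclude, I would pass to the limit $k\to\infty$. Both sides depend only on the values of $\pi^{*}\phi_{k}$ and $\pi^{*}(d\phi_{k})$ on $(\rho,1)\times U$, since this set contains the supports of $\beta$ and of $\delta_{i-1}\beta$. Applying the estimate from part (1) to the differences $\phi_{k}-\phi$ and $d\phi_{k}-\eta$ (integrated over $U$ instead of $F$) shows that $\pi^{*}\phi_{k}\to\pi^{*}\phi$ and $\pi^{*}(d\phi_{k})\to\pi^{*}\eta$ in $L^{2}((\rho,1)\times U,g_{c})$, so Cauchy--Schwarz yields the identity $\langle\pi^{*}\phi,\delta_{i-1}\beta\rangle=\langle\pi^{*}\eta,\beta\rangle$ in the limit; by the definition of $d_{max}$ this is exactly the statement that $d_{max,i-1}(\pi^{*}\phi)=\pi^{*}\eta$ on $(\rho,1)\times F$. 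The only non-trivial ingredient is the existence of the smooth approximation with the correct graph-norm convergence on $U$; this is a standard consequence of Friedrichs mollification, since $U$ is relatively compact and the warped-product structure of $g_{c}$ plays no role beyond the elementary $L^{2}$-bound already used in part (1).
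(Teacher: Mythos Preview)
Your proof is correct and follows essentially the same route as the paper: parts (1) and (2) are the identical Fubini computation, and part (3) proceeds by approximating $\phi$ in graph norm by smooth forms, integrating by parts against the compactly supported $\beta$, and passing to the limit. The only difference is in how the approximating sequence is obtained: the paper invokes a global density result of Cheeger (smooth $L^{2}$ forms with $L^{2}$ differential are graph-dense in $\mathcal{D}(d_{max})$ on any Riemannian manifold) to produce $\phi_{j}\to\phi$, $d\phi_{j}\to\eta$ in $L^{2}(F,g)$, whereas you localize to a relatively compact $U\supset\pi(\mathrm{supp}\,\beta)$ and use Friedrichs mollification there. Your localization is a legitimate and slightly more self-contained variant, since the pairing with $\beta$ only sees values over $U$; the paper's version has the advantage of yielding a single global sequence independent of $\beta$.
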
 

\begin{proof} $$\|\pi^{*}\phi\|_{L^{2}((\rho,1)\times F)}^{2}=\int_{\rho}^{1}r^{c(f-2i+2)}dr\int_{F}\|\phi\|_{F}^{2}dvol_{F}=\int_{\rho}^{1}r^{c(f-2i+2)}dr\|\phi\|_{L^{2}(F,g)}^{2}<\infty$$ so $\pi^{*}\phi\in L^{2}\Omega^{i-1}((1,\rho)\times F)$; $$\|\pi^{*}\eta\|_{L^{2}((\rho,1)\times F)}^{2}=\int_{\rho}^{1}r^{c(f-2i)}dr\int_{F}\|\eta\|_{F}^{2}dvol_{F}=\int_{\rho}^{1}r^{c(f-2i)}dr\|\eta\|_{L^{2}(F,g)}^{2}<\infty$$ so $\pi^{*}\eta\in L^{2}\Omega^{i}((1,\rho)\times F)$.\\By a  Cheeger's result, \cite{C}  pag 93,  $<\pi^{*}\phi,\delta_{i}\beta>_{L^{2}((\rho,1)\times F)}=<\pi^{*}\eta,\beta>_{L^{2}((\rho,1)\times F)}$ for all $\beta\in C_{0}^{\infty}\Omega^{i}((\rho,1)\times F)$ if and only if there is a sequence of smooth forms $\alpha_{j}\in L^{2}\Omega^{i-1}((\rho,1)\times F)$ such that $d_{i-1}\alpha_{j}\in L^{2}\Omega^{i}((\rho,1)\times F)$, $\|\pi^{*}\phi-\alpha_{j}\|_{L^{2}((\rho,1)\times F)}\rightarrow 0, \|\pi^{*}\eta-d_{i-1}\alpha_{j}\|_{L^{2}((\rho,1)\times F)}\rightarrow 0$ for $j\rightarrow \infty$. Using this  Cheeger's result , from the fact that $\phi\in Dom(d_{i-1,max})$, it follows that there is a sequences of smooth forms $\phi_{j}\in L^{2}\Omega^{i-1}(F,g)$ such that $d_{i-1}\phi_{j}\in L^{2}\Omega^{i}(F,g)$, $\|\phi-\phi_{j}\|_{L^{2}(F,g)}\rightarrow 0, \|\eta-d_{i-1}\phi_{j}\|_{L^{2}(F,g)}\rightarrow 0$ for $j\rightarrow \infty$. Now if we put $\alpha_{j}=\pi^{*}(\phi_{j})$ we obtain a sequence of smooth forms in $L^{2}\Omega^{i-1}((\rho,1)\times F)$ satisfying the assumptions of the same  Cheeger's result cited above. Indeed for each $j$ $$d_{i}\alpha_{j}\in L^{2}\Omega^{i}((\rho,1)\times F)$$ $$\|\alpha_{j}-\pi^{*}\phi\|_{{L^{2}((\rho,1)\times F)}}=\int_{\rho}^{1}r^{c(f-2i+2)}dr\int_{F}\|\phi-\alpha_{j}\|_{F}^{2}dvol_{F}\rightarrow 0$$ for $j\rightarrow \infty$ and similarly $$\|d\alpha_{j}-\pi^{*}\eta\|_{{L^{2}((\rho,1)\times F)}}\rightarrow 0$$ for $j\rightarrow \infty$. Therefore we can conclude that for all $\beta\in C_{0}^{\infty}\Omega^{i}((\rho,1)\times F)$  $$<\pi^{*}\phi,\delta_{i}\beta>_{L^{2}((\rho,1)\times F)}=<\pi^{*}\eta,\beta>_{L^{2}((\rho,1)\times F)}$$.
\end{proof} 

\begin{prop} Let $(F,g)$ be an oriented odd dimensional riemannian manifold such that\\ $d_{max,i-1}:\mathcal{D}(d_{max,i-1})\longrightarrow L^{2}\Omega^{i}(F,g)$ has closed range, where $i=\frac{f+1}{2}$ and $f=dimF$. Let $\alpha \in  L^2\Omega^{i}(C^{*}(F),g_{c})$ a smooth $i-$form such that $d_{i}\alpha \in L^2\Omega^{i+1}(C^{*}(F),g_{c})$. Then:
\begin{enumerate} 
\item  For almost all $b\in (0,1)$ there is an exact $i-$form $\eta_{b}\in \mathcal{D}(d_{max,i}) \subset L^2\Omega^{i}(F,g)$, $\eta_{b}=d_{max,i-1}\psi_{b},\ \psi_{b}\in \mathcal{D}(d_{max,i-1}) \subset L^{2}\Omega^{i-1}(F,g)$, such that for all $0<\rho<1$ on $(\rho,1)\times F$ with the restricted metric $g_{c}$ $$\|d_{i-1}(K_{b}\alpha)-(\alpha-K_{0}(d_{i}\alpha)-\pi^{*}(\eta_{b}))\|_{L^{2}((\rho,1)\times F)}=0$$
\item On $L^{2}\Omega^{i-1}(C^{*}(F),g_{c})$ we have $d_{max,i-1}(K_{b}\alpha+\pi^{*}(\psi_{b}))+K_{0}(d_{i}\alpha)=\alpha$
\end{enumerate}
\label{bei}
\end{prop}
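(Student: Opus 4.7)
The plan is to derive the desired identity at the smooth level by a cone-type Poincar\'e homotopy computation, verify that the resulting objects lie in the claimed $L^2$ spaces, and finally deduce exactness of $\eta_b$ from the closed range hypothesis by an approximation argument that exploits the middle-degree condition $i=(f+1)/2$.

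First I would decompose $\alpha=\phi+dr\wedge\omega$ with $\phi$ and $\omega$ smooth families of forms on $F$. A direct calculation gives $d(K_b\alpha)=\int_b^r d_F\omega(s)\,ds+dr\wedge\omega$ and, since $d\alpha=d_F\phi+dr\wedge(\partial_r\phi-d_F\omega)$, also $K_0(d\alpha)=\int_0^r(\partial_s\phi-d_F\omega)\,ds$. Splitting $\int_0^r=\int_0^b+\int_b^r$ and using $\int_b^r\partial_s\phi\,ds=\phi(r)-\phi(b)$ (this uses only the smoothness of $\phi$ on $(0,1]$) yields the pointwise identity
$$d(K_b\alpha)=\alpha-K_0(d\alpha)-\pi^{*}\bigl(\phi(b)-K_0(d\alpha)|_{r=b}\bigr)\qquad\text{on }(0,1)\times F,$$
which forces the definition $\eta_b:=\phi(b)-K_0(d\alpha)|_{r=b}$. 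To place $\eta_b$ in $L^2\Omega^i(F,g)$, observe that $\phi\in L^2\Omega^i(C^*(F),g_c)$ by hypothesis and $K_0(d\alpha)\in L^2\Omega^i(C^*(F),g_c)$ by proposition \ref{bau} applied to $d\alpha\in L^2\Omega^{i+1}$ (its boundedness condition is trivially satisfied). Two applications of proposition \ref{yui}, intersecting the resulting full-measure sets of good $b$, then give $\eta_b\in L^2\Omega^i(F,g)$ for a.e.\ $b$, and the smooth identity above is precisely the $L^2((\rho,1)\times F)$ equality asserted in item 1.

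The main obstacle is proving $\eta_b$ is exact. For each $\epsilon\in(0,b)$, a weighted Cauchy--Schwarz estimate on $[\epsilon,b]$ shows that $\psi_b^{(\epsilon)}:=\int_\epsilon^b\omega(s)\,ds\in L^2\Omega^{i-1}(F,g)$ and $\eta_b^{(\epsilon)}:=d_F\psi_b^{(\epsilon)}=\int_\epsilon^b d_F\omega\,ds\in \operatorname{ran}(d_{max,i-1})$. A calculation parallel to paragraph one gives
$$\eta_b-\eta_b^{(\epsilon)}=\phi(\epsilon)-K_0(d\alpha)|_{r=\epsilon}.$$
Here the assumption $i=(f+1)/2$ is exactly what places us in the regime of proposition \ref{frau}: applying it separately to the two $L^2$ forms $\phi$ and $K_0(d\alpha)$ on the cone and extracting a common sequence by a diagonal argument, we obtain $\epsilon_s\downarrow 0$ along which both $\|\phi(\epsilon_s)\|_{L^2(F,g)}\to 0$ and $\|K_0(d\alpha)(\epsilon_s)\|_{L^2(F,g)}\to 0$ (the cylindrical $L^2$-norm of a pullback factoring as a positive constant times the base norm converts proposition \ref{frau}'s conclusion into convergence on $F$). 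Consequently $\eta_b^{(\epsilon_s)}\to\eta_b$ in $L^2\Omega^i(F,g)$, and the closed-range hypothesis on $d_{max,i-1}$ forces $\eta_b\in\operatorname{ran}(d_{max,i-1})$, giving $\eta_b=d_{max,i-1}\psi_b$ for some $\psi_b\in\mathcal{D}(d_{max,i-1})$.

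Finally, for item 2: $K_b\alpha\in L^2\Omega^{i-1}(C^*(F),g_c)$ by proposition \ref{qaz}, and $\pi^{*}\psi_b\in L^2\Omega^{i-1}(C^*(F),g_c)$ by proposition \ref{lop} (the condition $i-1<f/2+1/(2c)$ is automatic in middle degree). Item 1 together with $d(\pi^{*}\psi_b)=\pi^{*}(d_F\psi_b)=\pi^{*}\eta_b$ cancels the $\pi^{*}\eta_b$ term on every cylinder $(\rho,1)\times F$, producing the pointwise identity $d(K_b\alpha+\pi^{*}\psi_b)=\alpha-K_0(d\alpha)$ on $(0,1)\times F$. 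Since every test form $\zeta\in C_0^{\infty}\Omega^i(C^*(F))$ has compact support inside some $(\rho,1)\times F$, the pointwise identity lifts directly to the weak identity $d_{max,i-1}(K_b\alpha+\pi^{*}\psi_b)+K_0(d\alpha)=\alpha$ on all of $C^*(F)$, completing item 2.
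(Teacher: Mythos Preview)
Your argument is essentially the paper's: the same decomposition $\alpha=\phi+dr\wedge\omega$, the same operators $K_b,K_0$, the same candidate $\eta_b$ (your $\phi(b)-K_0(d\alpha)|_{r=b}$ coincides with the paper's $\gamma(b)$ via $K_0(d\alpha)=\phi-\gamma$), the same exactness argument through the approximants $\int_\epsilon^b d_F\omega$ and the closed-range hypothesis, and the same passage to the weak identity on the full cone by testing against compactly supported forms. Your presentation of the homotopy identity is a bit more direct, but the content is identical.

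There is one genuine technical gap in your item 2. You write $d(\pi^{*}\psi_b)=\pi^{*}(d_F\psi_b)=\pi^{*}\eta_b$ and call the result a ``pointwise identity''. But $\psi_b$ is only an element of $\mathcal{D}(d_{max,i-1})\subset L^2\Omega^{i-1}(F,g)$, not a smooth form, so $d_F\psi_b$ has no pointwise meaning and the chain rule you invoke is formal. What you actually need is that $\pi^{*}\psi_b\in\mathcal{D}(d_{max,i-1})$ on $(\rho,1)\times F$ with $d_{max,i-1}(\pi^{*}\psi_b)=\pi^{*}\eta_b$; this is precisely the content of Proposition~\ref{mr}, which the paper invokes explicitly at this step. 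Without it, your final paragraph does not close. A smaller remark: the ``diagonal argument'' for a common sequence in Proposition~\ref{frau} is not quite the right mechanism---rather, apply the proposition once to the single $L^2$ tangential form $\phi-K_0(d\alpha)$ (or, equivalently, apply Cheeger's $L^1$ lemma to the sum of the two $L^1(0,1)$ functions), which yields the common sequence directly.
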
 

\begin{proof}  1) Let $\alpha=\phi+dr\wedge \omega.$ Consider $K_{\epsilon}(d_{i}\alpha)=\phi-\pi^{*}\phi(\epsilon)-\int_{\epsilon}^{r}d_{F}\omega ds$. Obviously for each $0<\rho<1$ $K_{\epsilon}(d_{i}\alpha)\in L^{2}\Omega^{i}((\rho,1)\times F)$ with the restricted metric $g_{c}$. From the fact that $\alpha$ is an $i-$ form and that $i+1=\frac{f+1}{2}+1>\frac{f}{2}+1-\frac{1}{2c}$ follows that we can use prop. \ref{miao} to conclude that $$K_{0}(d_{i}\alpha)\in L^{2}\Omega^{i}(C^{*}(F),g_{c})$$ and $$\|K_{\epsilon}(d_{i}\alpha)- K_{0}(d_{i}\alpha)\|_{L^{2}((\rho,1)\times F)}\rightarrow 0$$ for $\epsilon \rightarrow 0$. For the same reasons we can use prop. \ref{frau} to say that there is a sequence $\epsilon_{j}\rightarrow 0$ such that, on $(\rho,1)\times F$ with the restricted metric $g_{c}$,  $$\lim_{\epsilon_{j}\rightarrow 0}\|\pi^{*}\phi(\epsilon_{j})\|_{L^{2}((\rho,1)\times F, g_{c})}^{2}=0.$$ Therefore using these facts we can conclude that $$\lim_{\epsilon_{j}\rightarrow 0} \int_{\epsilon_{j}}^{r}d_{F}\omega ds\ \text{exists in}\ L^{2}\Omega^{i}((\rho,1)\times F)$$ and, if we call this limit $\gamma$, we have 
$$K_{0}(d_{i}(\alpha))=\phi-\gamma$$ in $\ L^{2}\Omega^{i}((\rho,1)\times F)$ with the restricted metric $g_{c}$.

 
 From this fact it follows that for almost all $b\in (0,1)$ $\int_{\epsilon_{j}}^{b}d_{F}\omega ds\rightarrow \gamma(b)$ in $L^{2}\Omega^{i}(F,g)$ for $\epsilon_{j} \rightarrow 0$. But $\int_{\epsilon_{j}}^{b}d_{F}\omega ds$ is a smooth form in $L^{2}\Omega^{i}(F,g)$; $\int_{\epsilon_{j}}^{b}\omega ds$ is a smooth form in $L^{2}\Omega^{i-1}(F,g)$ and $d_{i-1}(\int_{\epsilon_{j}}^{b}\omega ds)=\int_{\epsilon_{j}}^{b}d_{F}\omega ds.$ So we can conclude that  $\int_{\epsilon_{j}}^{b}d_{F}\omega ds=d_{max,i-1}(\int_{\epsilon_{j}}^{b}\omega ds)$ with $d_{max,i-1}:\mathcal{D}(d_{max,i-1})\rightarrow L^{2}\Omega^{i}(F,g)$. From this  it follows that      $\gamma(b)$ is in the closure of the image of $d_{max,i-1}:\mathcal{D}(d_{max,i-1})\rightarrow L^{2}\Omega^{i}(F,g)$ and so it follows from the assumptions that there is $\psi_{b}\in \mathcal{D}(d_{max,i-1})\subset L^{2}\Omega^{i-1}(F,g)$ such that $$d_{max,i-1}\psi_{b}=\gamma(b).$$ \\We choose one of these $b$ and $\epsilon$ such that $b>\epsilon$.\\ Now we consider $d_{i-1}(K_{b}(\alpha))=dr\wedge \omega+\int_{b}^{r}d_{F}\omega$. Adding $d_{i-1}(K_{b}(\alpha))$ and $K_{\epsilon}(d_{i}\alpha)$ we obtain $$d_{i-1}(K_{b}(\alpha))=\alpha-K_{\epsilon}(d_{i}\alpha)-\pi^{*}\phi(\epsilon)-\pi^{*}(\int_{\epsilon}^{b}d_{F}\omega ds)\in L^{2}\Omega^{i}((\rho,1)\times F))$$ with the restricted metric $g_{c}$ for all $\rho\in (0,1)$.\\  We analyze in detail the terms on the right of equality.  As noted above from the prop. \ref{frau} we know that there is a sequence $\epsilon_{j}\rightarrow 0$ such that $$\lim_{\epsilon_{j}\rightarrow 0}\|\pi^{*}\phi(\epsilon_{j})\|_{L^{2}((\rho,1)\times F, g_{c})}^{2}=0.$$ Similarly from the proposition \ref{miao} we know that $$\|K_{\epsilon_{j}}(d_{i}\alpha)- K_{0}(d_{i}\alpha)\|_{L^{2}((\rho,1)\times F)}\longrightarrow 0$$ for $\epsilon_{j} \rightarrow 0$. For the term $\pi^{*}(\int_{\epsilon_{j}}^{b}d_{F}\omega ds)$ we know, by the  observations made at the beginning of the proof and prop. \ref{mr}, that there is an $(i-1)-$form $\psi_{b}\in Dom(d_{max,i-1})\subset L^{2}\Omega^{i-1}(F,g)$ such that $$\|\pi^{*}(\int_{\epsilon_{j}}^{b}d_{F}\omega ds)- \pi^{*}(d_{max,i-1}(\psi_{b}))\|_{L^{2}((\rho,1)\times F)}\longrightarrow 0$$  for $\epsilon_{j} \rightarrow 0$. Summarizing, for all $\rho\in (0,1)$, we have on $(\rho,1)\times F$ with the restricted metric $g_{c}$ $$\lim_{\epsilon_{j}\rightarrow 0}\|\alpha-K_{\epsilon_{j}}(d_{i}\alpha)-\phi(\epsilon_{j})-\pi^{*}(\int_{\epsilon_{j}}^{b}d_{F}\omega ds)-(\alpha-K_{0}(d_{i}\alpha)-\pi^{*}(d_{i-1,max}(\psi_{b})))\|_{L^{2}((\rho,1)\times F)}=0.$$ Therefore, if we put $\eta_{b}=\gamma(b)$, by the fact that $$d_{i-1}(K_{b}(\alpha))=\alpha-K_{\epsilon_{j}}(d_{i}\alpha)-\pi^{*}\phi(\epsilon_{j})-\pi^{*}(\int_{\epsilon_{j}}^{b}d_{F}\omega ds)$$ for all $j$, we can conclude that $$\|d_{i-1}(K_{b}\alpha)-(\alpha-K_{0}(d_{i}\alpha)-\pi^{*}(\eta_{b}))\|_{L^{2}((\rho,1)\times F)}=0$$ \vspace{1 cm}

2) Before proving the statement we observe that from that fact that $i=\frac{f+1}{2}$ it follows that we can use prop \ref{qaz} to conclude that $K_{b}\alpha\in L^{2}\Omega^{i-1}(C^{*}(F),g_{c})$. Analogously we can use prop \ref{lop} to conclude that $\pi^{*}\psi_{b}\in L^{2}\Omega^{i-1}(C^{*}(F),g_{c})$. Let $\phi\in C^{\infty}_{0}\Omega^{i}(C^{*}(F))$. Then there is $\rho \in (0,1)$ such that $supp(\phi) \subset (\rho,1)\times F$.\\ We consider now: $$<K_{b}\alpha,\delta_{i-1}\phi>_{L^{2}(C^{*}(F),g_{c})}=<K_{b}\alpha,\delta_{i-1}\phi>_{L^{2}((\rho,1)\times F)}.$$ By the fact that $K_{b}(\alpha)$ is a smooth $(i-1)-$form such that $\|K_{b}(\alpha)\|_{L^{2}((1,\rho)\times F)}<\infty$,\\  $\|d_{i-1}(K_{b}\alpha)\|_{{L^{2}((1,\rho)\times F)}}<\infty$ and that $\phi$ is a smooth form with compact support it follows that: $$<K_{b}\alpha,\delta_{i-1}\phi>_{L^{2}((\rho,1)\times F)}=<d_{i-1}(K_{b}(\alpha),\phi>_{L^{2}((\rho,1)\times F)}=$$ $$=<\alpha-K_{0}(d_{i}\alpha)-\pi^{*}(\eta_{b}),\phi>_{L^{2}((\rho,1)\times F)}=$$ $$=<\alpha,\phi>_{L^{2}((\rho,1)\times F))}-<K_{0}(d_{i}\alpha),\phi>_{L^{2}((\rho,1)\times F)}-<\pi^{*}(\eta_{b}),\phi>_{L^{2}((\rho,1)\times F)}=$$ $$=<\alpha,\phi>_{L^{2}((\rho,1)\times F)}-<K_{0}(d_{i}\alpha),\phi>_{L^{2}((\rho,1)\times F)}-<\pi^{*}(\psi_{b}),\delta_{i-1}\phi>_{L^{2}((\rho,1)\times F)}=$$ $$=<\alpha,\phi>_{L^{2}(C^{*}(F),g_{c})}-<K_{0}(d_{i}\alpha),\phi>_{L^{2}(C^{*}(F),g_{c})}-<\pi^{*}(\psi_{b}),\delta_{i-1}\phi>_{L^{2}(C^{*}(F),g_{c})}.$$In particular the equality $<\pi^{*}(\psi_{b}),\delta_{i-1}\phi>_{L^{2}((\rho,1)\times F)}=<\pi^{*}(\eta_{b}),\phi>_{L^{2}((\rho,1)\times F)}$  follows from prop. \ref{mr}. We have obtained that for all $\phi\in C^{\infty}_{0}\Omega^{i}(C^{*}(F))$ $$<K_{b}\alpha+\pi^{*}\psi_{b},\delta_{i-1}\phi>_{L^{2}(C^{*}(F),g_{c})}=<\alpha-K_{0}(d_{i}\alpha),\phi>_{L^{2}(C^{*}(F),g_{c})}.$$ So we can conclude that $$d_{max,i-1}(K_{b}\alpha+\pi^{*}(\psi_{b}))+K_{0}(d_{i}\alpha)=\alpha.$$
\end{proof}

\section{$L^{2}$ cohomology of a cone over a riemannian manifold} 

In this section we continue to use the notations of the previous section.

\begin{teo} Let $(F,g)$ be an oriented riemannian manifold. Then for the riemannian manifold $(C^{*}(F),g_{c})$, with $g_{c}$ as in \eqref{fcc} the following isomorphism holds: 

\begin{equation}
H^{i}_{2,max}(C^{*}(F),g_{c}) = \left\{  
\begin{array}{ll}
H_{2,max}^{i}(F,g) & i<\frac{f}{2}+\frac{1}{2c}\\
0& i>\frac{f}{2}+1-\frac{1}{2c}
\end{array}
\right.
\end{equation} 
\label{lll}
\end{teo}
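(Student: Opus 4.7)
The proof proceeds in two parts, one for each regime, and mirrors Cheeger's classical cone computation \cite{C} with every integrand now weighted by the exponent $c$.

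For the vanishing regime $i > f/2 + 1 - 1/(2c)$, I may assume the given class has a smooth $L^2$ representative by Proposition \ref{zaq}, and write $\alpha = \phi + dr\wedge\omega$ with $d\alpha = 0$; expanding the latter gives $d_F\phi = 0$ and $\partial_r\phi = d_F\omega$. Under the degree hypothesis, Proposition \ref{bau} makes $K_0\alpha = \int_0^r \omega(s)\,ds$ a bounded element of $L^2\Omega^{i-1}(C^*(F),g_c)$. Differentiating under the integral and using the closedness of $\alpha$ yields
\[
d(K_0\alpha) \;=\; dr\wedge\omega + \int_0^r d_F\omega(s)\,ds \;=\; \alpha - \lim_{\epsilon \to 0}\pi^*\phi(\epsilon),
\]
and the boundary slice vanishes in $L^2$ along an appropriate subsequence $\epsilon_j\to 0$ thanks to Proposition \ref{frau} (whose hypothesis $i\geq f/2 + 1/(2c)$ is implied by ours). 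Hence $d_{max}(K_0\alpha) = \alpha$.

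For the pullback regime $i < f/2 + 1/(2c)$, Proposition \ref{lop} makes $\pi^*\colon L^2\Omega^i(F,g)\to L^2\Omega^i(C^*(F),g_c)$ bounded, and Proposition \ref{mr} shows that it commutes with $d_{max}$ (applied on each cylindrical slab $(\rho,1)\times F$ and then globalised), so $\pi^*$ induces a well-defined map on cohomology. I will show it is bijective. For surjectivity, start from a smooth closed $\alpha = \phi + dr\wedge\omega$. The operator $K_b$ of Proposition \ref{qaz} is bounded on this degree (since the required bound $i < f/2 + 1/(2c)+1$ is weaker than our hypothesis), and a direct computation gives the homotopy identity $d(K_b\alpha) = \alpha - \pi^*\phi(b)$. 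Proposition \ref{yui} lets me pick $b\in(1/2,1)$ with $\phi(b)\in L^2\Omega^i(F,g)$; closedness of $\alpha$ ensures $d_F\phi(b)=0$; Proposition \ref{lop} places $\pi^*\phi(b)$ in $L^2$; so $[\alpha] = \pi^*[\phi(b)]$. For injectivity, suppose $\phi\in L^2\Omega^i(F,g)$ is closed and $\pi^*\phi = d_{max}\eta$ on the cone; writing $\eta = \eta_1 + dr\wedge\eta_2$, the equation forces distributionally $d_F\eta_1 = \phi$ and $\partial_r\eta_1 = d_F\eta_2$. Taking a generic slice via Proposition \ref{yui} gives some $a$ for which $\eta_1(a)\in L^2\Omega^{i-1}(F,g)$, and an argument parallel to that of Proposition \ref{mr} shows $d_{max}\eta_1(a) = \phi$ on $F$, so $[\phi]=0$.

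The delicate point is the vanishing regime: one must be sure that $d_{max}(K_0\alpha) = \alpha$ holds globally on $C^*(F)$ and not merely on each slab $(\rho,1)\times F$, and that the boundary term truly disappears in the global $L^2$ norm rather than only in each slab. Both are handled by exhausting $C^*(F)$ with $(\rho_k,1)\times F$, $\rho_k\to 0$, applying Propositions \ref{frau}, \ref{miao} and \ref{mr} on each piece, and passing to the limit using the uniform quasi-isometry constants furnished by the weighted bounds. The rest is a careful bookkeeping exercise in which $c$ enters only through the explicit exponents already tracked in the preliminary propositions.
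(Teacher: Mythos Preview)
Your argument is essentially the paper's, and the vanishing regime is handled identically (in fact your final paragraph overcomplicates it: no exhaustion is needed, since any test form $\beta\in C_0^\infty$ is already supported in some slab $(\rho,1)\times F$, and the computation on that one slab establishes $\langle K_0\alpha,\delta\beta\rangle=\langle\alpha,\beta\rangle$ globally).

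For the range $i<f/2+1/(2c)$ the paper's organisation is tighter and avoids the one sketchy step in yours. The paper works entirely in the smooth $L^2$ complex $(\Omega_2^*,d_*)$ and uses the \emph{two-sided} homotopy identity
\[
dK_a+K_a d=\mathrm{id}-\pi^*\circ v_a
\]
together with the obvious relation $v_a\circ\pi^*=\mathrm{id}$ on $\Omega_2^i(F,g)$; this exhibits $v_a^*$ and $(\pi^*)^*$ as mutual inverses in one stroke. Your surjectivity is the same idea restricted to closed forms. Your injectivity, however, is not literally ``parallel to Proposition~\ref{mr}'': that proposition lifts $d_{\max}$ from $F$ to a slab, whereas you need the reverse direction---from $d_{\max}\eta=\pi^*\phi$ on the cone with $\eta$ merely in $L^2$, conclude that a generic slice $\eta_1(a)$ lies in $\mathcal{D}(d_{F,\max})$ with $d_{F,\max}\eta_1(a)=\phi$. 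This is true, but it requires a Fubini/diagonal-subsequence argument (approximate $\eta$ by smooth $\alpha_j$ via Cheeger's density result, pass to a subsequence along which both $\alpha_{j,1}(r)\to\eta_1(r)$ and $d_F\alpha_{j,1}(r)\to\phi$ in $L^2(F)$ for a.e.\ $r$, then use closedness of $d_{F,\max}$). You should either spell this out or, more simply, adopt the paper's device of working in the smooth complex and using $v_a$ as the explicit inverse.
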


\begin{proof} For the first part of the proof we use the complex $(\Omega^{*}_{2}(C^{*}(F),g_{c}),d_{*})$ of  prop. \ref{zaq}. Let $\alpha\in \Omega^{i}_{2}(C^{*}(F),g_{c})$, $\alpha=\phi+dr\wedge \omega$, $i=0,...,f+1$. Let $a\in (\frac{1}{2},1)$. Consider the following map 
\begin{equation}
v_{a}:\Omega^{i}_{2}(C^{*}(F),g_{c})\rightarrow \Omega^{i}_{2}(F,g),\ v_{a}(\alpha)=\phi(a).
\label{mkmk}
\end{equation}
By prop.\ref{yui} $v_{a}(\alpha)\in L^{2}\Omega^{i}(F,g)$. Furthermore this map satisfies $v_{a}\circ d_{i}=d_{i}\circ v_{a}$ where on the left of the equality $d_{i}$ is the $i-$th differential of the complex $(\Omega_{2}^{*}(C^{*}(F),g_{c}),d_{*})$ while on the right of the equality the operator $d_{i}$ is the $i-$th differential of the complex $(\Omega_{2}^{*}(F,g),d_{*})$.
 Therefore $v_{a}$ is a morphism between the complex $(\Omega_{2}^{*}(C^{*}(F),g_{c}),d_{*})$ and the complex $(\Omega_{2}^{*}(F,g),d_{*})$ so it induces a map between the cohomology groups 
 \begin{equation}
 v_{a}^{*}:H_{2}^{i}(C^{*}(F),g_{c})\rightarrow H_{2}^{i}(F,g)
 \label{mkm}
 \end{equation}
 where $H_{2}^{i}(F,g)$ is the $i-th$ cohomology group of the complex $(\Omega_{2}^{*}(F,g),d_{*}).$\\Now in the case $i<\frac{f}{2}+\frac{1}{2c}$, by proposition  \ref{qaz}, we know that  $K_{a}(\alpha)$ and $K_{a}(d_{i}\alpha)$ are two smooth form such that\\ $\|K_{a}(d_{i}\alpha)\|_{L^{2}(C^{*}(F),g_{c})}<\infty$ and $\|K_{a}\alpha\|_{L^{2}(C^{*}(F),g_{c})}<\infty$. If we add the two following terms, $d_{i-1}(K_{a}(\alpha))$ and $K_{a}(d_{i}(\alpha))$ we obtain: 
\begin{equation}
d_{i-1}(K_{a}\alpha)+K_{a}(d_{i}(\alpha))=dr\wedge \omega(s)ds+\int_{a}^{r}d_{F}(s)ds\omega+\phi-\phi(a)-\int_{a}^{r}d_{F}(s)ds\omega=\alpha-\pi^{*}(v_{a}(\alpha)).
\label{oooo}
\end{equation}
 So we have obtained that $\|d_{i-1}(K_{a}\alpha)\|_{L^{2}(C^{*}(F),g_{c})}<\infty$ and from this and \eqref{oooo} it follows that 
  $$(\pi^{*})^*\circ v_{a}^*:H_{2}^{i}(C^{*}(F),g_{c})\rightarrow H_{2}^{i}(C^{*}(F),g_{c})$$ is 
 an isomorphism for $i<\frac{f}{2}+\frac{1}{2c}$. Now from this fact it follows that for the same $i$: $$v_{a}^{*}:H_{2}^{i}(C^{*}(F),g_{c})\rightarrow H_{2}^{i}(F,g)$$ is injective and that $$(\pi^{*})^{*}: H_{2}^{i}(F,g)\rightarrow H_{2}^{i}(C^{*}(F),g_{c})$$ is surjective. But from prop. \ref{lop} we know that $v_{a}^{*}:H_{2}^{i}(C^{*}(F),g_{c})\rightarrow H_{2}^{i}(F,g)\ is\ surjective$. So for $i<\frac{f}{2}+\frac{1}{2c}$ $H_{2}^{i}(C^{*}(F),g_{c})$ and $H_{2}^{i}(F,g)$  are isomorphic and therefore by proposition \ref{zaq} for the same $i$ we have $$H_{2,max}^{i}(C^{*}(F), g_{c})\cong H_{2,max}^{i}(F, g).$$

Now we start the second part of the proof. We know that for each $i$ every cohomology class $[\alpha]\in H_{2,max}^{i}(C^{*}(F))$ has a smooth representative. So let $\alpha\in L^{2}\Omega^{i}(C^{*}(F),g_{c})$, $i>\frac{f}{2}+1-\frac{1}{2c}$, a smooth form such that $d_{i}\alpha=0$. Observe that from the fact that $\alpha$ is closed follows that $\phi^{'}=d_{F}\omega$ and therefore, given $\epsilon\in (0,1)$ we have $d_{i-1}(K_{\epsilon}\alpha)=d_{i-1}(\int_{\epsilon}^{r}\omega(s)ds)=dr\wedge \omega+\int_{\epsilon}^{r}d_{F}\omega(s)ds=dr\wedge \omega+\int_{\epsilon}^{r}\phi^{'}(s)ds=dr\wedge \omega+\phi-\phi(\epsilon)=\alpha-\phi(\epsilon)$. Consider $K_{0}(\alpha)$; by proposition \ref{bau} we know that $K_{0}(\alpha)\in L^{2}\Omega^{i}(C^{*}(F),g_{c})$. We want to show that $d_{max,i-1}(K_{0}(\alpha))=\alpha$.\\Let $\beta\in C^{\infty}_{0}\Omega^{i}(C^{*}(F))$. Then there is $\rho>0$ such that $supp(\beta)\subset (\rho,1)\times F$. Therefore: $$<K_{0}\alpha,\delta_{i-1}\beta>_{L^{2}(C^{*}(F))}=<K_{0}\alpha,\delta_{i-1}\beta>_{L^{2}((\rho,1)\times F)}= (by\ prop\ 
\ref{miao})$$ $$=\lim_{\epsilon\rightarrow 0}<K_{\epsilon}\alpha,\delta_{i-1}\beta>_{L^{2}((\rho,1)\times F)}.$$By the fact that $K_{\epsilon}(\alpha)$ is a smooth form such that $\|K_{\epsilon}(\alpha)\|_{L^{2}((1,\rho)\times F)}<\infty$,\\ $\|d_{i-1}(K_{\epsilon}\alpha)\|_{{L^{2}((1,\rho)\times F)}}<\infty$ and that $\phi$ is a smooth form with compact support it follows that:$$\lim_{\epsilon\rightarrow 0}<K_{\epsilon}\alpha,\delta_{i-1}\beta>_{L^{2}((\rho,1)\times F)}=\lim_{\epsilon\rightarrow 0}<d_{i-1}(K_{\epsilon}\alpha),\beta>_{L^{2}((\rho,1)\times F)}=$$ $$=\lim_{\epsilon\rightarrow 0}<\alpha-\phi(\epsilon),\beta>_{L^{2}((\rho,1)\times F)}=<\alpha,\beta>_{L^{2}((\rho,1)\times F)}-\lim_{\epsilon\rightarrow 0}<\phi(\epsilon),\beta>_{L^{2}((\rho,1)\times F)}.$$ In particular the limit $$\lim_{\epsilon\rightarrow 0}<\phi(\epsilon),\beta>_{L^{2}((\rho,1)\times F)}$$ exist. But from prop. \ref{frau} we know that there is a sequence $\epsilon_{j}\rightarrow 0$ such that $$\lim_{\epsilon_{j}\rightarrow 0}<\phi(\epsilon_{j}),\beta>_{L^{2}((\rho,1)\times F)}=0.$$ Therefore $$<K_{0}\alpha,\delta_{i-1}\beta>_{L^{2}((\rho,1)\times F)}= <\alpha,\delta_{i-1}\beta>_{L^{2}((\rho,1)\times F)}=<\alpha,\delta_{i-1}\beta>_{L^{2}(C^{*}(F),g_{c})}.$$ Thus we can conclude that $d_{max,i-1}(K_{0}(\alpha))=0$ and hence that $H_{2,max}^{i}(C^{*}(F),g_{c})=0$ for $i>\frac{f}{2}+1-\frac{1}{2c}$.
\end{proof} 

\begin{cor} Suppose that  one of three following hypotheses applies:
\begin{enumerate}
\item $0<c<1$.
\item $c\geq 1$ and $f=dimF$ is even.
\item $c\geq 1$, $f$ is odd and $d_{max,i-1}:\mathcal{D}(d_{max,i-1})\rightarrow L^{2}\Omega^{i}(F,g)$ has close range where $i=\frac{f+1}{2}$. (By prop \ref{qwe} this happen for example when $H^{i}_{2,max}(F,g)$ is finite dimensional.)
\end{enumerate}

Then for the riemannian manifold $(C^{*}(F),g_{c})$ the following isomorphism holds: 
\begin{equation}
H^{i}_{2,max}(C^{*}(F),g_{c}) = \left\{  
\begin{array}{ll}
H_{2,max}^{i}(F,g) & i<\frac{f}{2}+\frac{1}{2c}\\
0& i\geq\frac{f}{2}+\frac{1}{2c}
\end{array}
\right.
\label{pol}
\end{equation} 
\label{edo}
\end{cor}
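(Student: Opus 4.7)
The plan is to reduce the statement to Theorem~\ref{lll} in most cases and to invoke Proposition~\ref{bei} for the unique remaining middle degree in the hardest hypothesis. Theorem~\ref{lll} already gives the conclusion for $i<\frac{f}{2}+\frac{1}{2c}$ and for $i>\frac{f}{2}+1-\frac{1}{2c}$, so the only work left is to verify the stated vanishing on the integer $i$'s (if any) lying in the closed interval $\bigl[\frac{f}{2}+\frac{1}{2c},\,\frac{f}{2}+1-\frac{1}{2c}\bigr]$.

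First I would check the arithmetic in the three hypotheses. Under hypothesis (1), $0<c<1$ forces $\frac{1}{2c}>\frac{1}{2}$, hence $\frac{f}{2}+1-\frac{1}{2c}<\frac{f}{2}+\frac{1}{2c}$; the interval is empty, every integer is covered by Theorem~\ref{lll} and the formula \eqref{pol} follows at once. Under hypothesis (2), $c\geq1$ and $f$ even, $\frac{f}{2}$ is an integer while $\frac{1}{2c}\in(0,\frac{1}{2}]$; thus the smallest integer $\geq\frac{f}{2}+\frac{1}{2c}$ is $\frac{f}{2}+1$, which is strictly greater than $\frac{f}{2}+1-\frac{1}{2c}$. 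Again no integer lies in the gap and Theorem~\ref{lll} delivers both branches of \eqref{pol}.

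The only case requiring genuine analysis is hypothesis (3): $c\geq 1$ and $f$ odd. Then the unique integer in the gap is $i=\frac{f+1}{2}$, and I must show $H^{i}_{2,max}(C^{*}(F),g_{c})=0$ there. By Proposition~\ref{zaq} every class in $H^{i}_{2,max}(C^{*}(F),g_{c})$ has a smooth representative $\alpha\in\Omega^{i}_{2}(C^{*}(F),g_{c})$ with $d_{i}\alpha=0$. Since $i=\frac{f+1}{2}$ and the closed range assumption for $d_{max,i-1}$ on $(F,g)$ is built into hypothesis~(3), Proposition~\ref{bei}(2) applies and yields forms $\psi_{b}\in\mathcal{D}(d_{max,i-1})$ and $K_{b}\alpha\in L^{2}\Omega^{i-1}(C^{*}(F),g_{c})$ such that
\[
d_{max,i-1}(K_{b}\alpha+\pi^{*}\psi_{b})+K_{0}(d_{i}\alpha)=\alpha.
\]
Because $d_{i}\alpha=0$ the second term vanishes, so $\alpha$ is exact in the maximal $L^{2}$ complex and $[\alpha]=0$. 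This gives the required vanishing at $i=\frac{f+1}{2}$, and combining with Theorem~\ref{lll} for all other $i$ completes the proof.

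The only potentially delicate step is the middle-degree vanishing under hypothesis~(3): one must ensure that Proposition~\ref{bei} is applicable, which is exactly why the closed range of $d_{max,(f-1)/2}$ on $(F,g)$ enters as a hypothesis. Once that is recognised, the corollary is essentially bookkeeping on top of Theorem~\ref{lll} and Proposition~\ref{bei}.
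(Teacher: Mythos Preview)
Your proof is correct and follows essentially the same approach as the paper: both reduce cases (1) and (2) to Theorem~\ref{lll} by the same arithmetic, and in case (3) invoke Proposition~\ref{bei} to handle the single middle degree $i=\frac{f+1}{2}$. You spell out in detail how Proposition~\ref{bei}(2) yields exactness of a closed smooth representative, whereas the paper simply says the conclusion ``immediately follows from prop.~\ref{bei}''; the substance is the same.
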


\begin{proof} If $0<1<c$ then $\frac{f}{2}+\frac{1}{2c}> \frac{f}{2}+1-\frac{1}{2c}$.\\If $c\geq 1$ and $f$ is even then $i>\frac{f}{2}+1-\frac{1}{2c}$ if and only if $i\geq\frac{f}{2}+\frac{1}{2c}$.\\ Finally if $c\geq 1$, $f$ is odd and $d_{max,i-1}:Dom(d_{max,i-1})\rightarrow L^{2}\Omega^{i}(F,g)$ has close range then the thesis immediately follows from prop. \ref{bei}.

\end{proof}

\begin{rem} Now we make a simple remark; theorem \ref{lll} also holds in the following two cases:
\begin{enumerate}
\item If we replace $C(F)$ with $C_{\epsilon}(F)$ where $C_{\epsilon}(F)=F\times [0,\epsilon)/F\times \{0\}$ and where $\epsilon$ is any real positive number. In this case we have only to modify prop. \ref{yui} and prop. \ref{qaz} choosing $a\in (\gamma, \epsilon)$ where $\gamma$ is a fixed and positive real number strictly smaller than $\epsilon$.  Furthermore if $\epsilon<\delta$  $$i^{*}:(L^{2}\Omega^{*}(C^{*}_{\delta}(F),g_{c}),d_{max,*})\rightarrow (L^{2}\Omega^{*}(C^{*}_{\epsilon}(F),g_{c}),d_{max,*})$$ where $i^{*}$ is the morphism of complexes induced by the inclusion $i:C_{\epsilon}(F)\rightarrow C_{\delta}(F)$, induces an isomorphism between the cohomology groups $H^{i}_{2,max}(C_{\epsilon}^{*}(F),g_{c})$ and\\ $H^{i}_{2,max}(C_{\delta}^{*}(F),g_{c})$ for each $i<\frac{f}{2}+\frac{1}{2c}$ or $i>\frac{f}{2}+1-\frac{1}{2c}$. This last assertion is easy to see. When $i>\frac{f}{2}+1-\frac{1}{2c}$ it is obvious because the cohomology groups are both null; when $i<\frac{f}{2}+\frac{1}{2c}$ it follows by the fact that given $a\in (\gamma,\epsilon)$ and given $v_{a}$, which is the evaluation map defined like  in \eqref{mkmk}, we have $v_{a}=v_{a}\circ i^{*}$ where at the left of the equality $v_{a}$ is between $\Omega_{2}^{i}(C_{\delta}^{*}(F),g_{c})$ and $\Omega_{2}^{i}(F,g)$  and at the right of the equality it is between $\Omega_{2}^{i}(C_{\epsilon}^{*}(F),g_{c})$ and $\Omega_{2}^{i}(F,g)$ . Finally if the hypotheses of corollary \ref{edo} holds then the same corollary holds for $C_{\epsilon}^{*}(F)$ and in this case $i^{*}$ induces an isomorphism between $H^{i}_{2,max}(C_{\epsilon}^{*}(F),g_{c})$ and $H^{i}_{2,max}(C_{\delta}^{*}(F),g_{c})$  for all $i$.
\item When $(F,g)$ is a disconnected riemannian manifold made of a finite number of connected components all having  the same dimension,  that is $(F,g)=\bigcup_{j\in J}(F_{j},g_{j})$, $dimF_{i}=dimF_{j}$ for each $i,j\in J$ and $J$ is finite. Indeed in this case: 
\begin{equation}
H^{i}_{2,max}(C^{*}(F),g_{c})=H^{i}_{2,max}(C^{*}(\bigcup_{j\in J}F_{j}),g_{c})=\bigoplus_{j\in J}H^{i}_{2,max}(C^{*}(F_{j}),g_{c,j})
\end{equation}
\begin{equation}
=\bigoplus_{j\in J} \left\{  
\begin{array}{ll}
H_{2,max}^{i}(F_{j},g_{j}) & i<\frac{f}{2}+\frac{1}{2c}\\
0& i>\frac{f}{2}+1-\frac{1}{2c}
\end{array}
\right.
= \left\{  
\begin{array}{ll}
H_{2,max}^{i}(F,g) & i<\frac{f}{2}+\frac{1}{2c}\\
0& i>\frac{f}{2}+1-\frac{1}{2c}
\end{array}
\right.
\end{equation}
Obviously if each $(F_{j},g_{j})$ satisfies the assumptions of corollary \ref{edo} then also corollary \ref{edo} holds for $(C^{*}(F),g_{c})$. This situation could happen in theorem \ref{ris} of the next section. In that case the manifold $F$ will be the regular part of a link and it could happen that it is disconnected.
\end{enumerate}
\label{sdds}
\end{rem}

We conclude the section  recalling a result from \cite{C} that we will use in the proof of theorem \ref{ris}.

\begin{prop} Let $(M,g)$ be a Riemannian manifold. Then for the riemannian manifold $((0,1)\times M,dr\otimes dr+g)$ the following isomorphism holds:
\begin{equation}
 H^{i}_{2,max}((0,1)\times M,dr\otimes dr+g)\cong H^{i}_{2,max}(M,g)\ for\ all\ i=0,...,dimM+1
\end{equation} 
\label{ioi}
\end{prop}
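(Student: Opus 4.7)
The plan is to mimic the structure of the proof of Theorem \ref{lll}, but in the unweighted setting ($c=0$ effectively on the fiber scaling) where the estimates become uniform in the degree $i$, so no dimensional restriction appears. Concretely, I will work with the smooth subcomplex $(\Omega^*_2((0,1)\times M, dr\otimes dr+g),d_*)$ from Proposition \ref{zaq}, which computes $H^*_{2,max}$, and exhibit explicit mutually inverse chain maps between it and $(\Omega^*_2(M,g),d_*)$.

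First I would define $\pi^*:\Omega^i_2(M,g)\to\Omega^i_2((0,1)\times M)$ by pullback along the projection $\pi:(0,1)\times M\to M$, and, for $a\in(1/2,1)$, the evaluation $v_a:\Omega^i_2((0,1)\times M)\to\Omega^i_2(M,g)$, $v_a(\phi+dr\wedge\omega):=\phi(a,\cdot)$. Since the metric is a genuine product, the fibrewise norm of $\pi^*\phi$ equals that of $\phi$, so
\[
\|\pi^*\phi\|^2_{L^2((0,1)\times M)}=\int_0^1\|\phi\|^2_{L^2(M,g)}\,dr=\|\phi\|^2_{L^2(M,g)},
\]
with the identical formula valid on $d\phi$; hence $\pi^*$ is a bounded chain map with no restriction on $i$. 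For $v_a$, the same argument as in Proposition \ref{yui} (applied to both $\phi$ and $d_M\phi$) shows that outside a null set of $a\in(1/2,1)$ the evaluation is bounded in both degrees, and clearly $v_a\circ d_i=d_i\circ v_a$, so $v_a$ is a chain map. Trivially $v_a\circ\pi^*=\mathrm{id}_{\Omega^*_2(M,g)}$.

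Next I would construct a chain homotopy between $\pi^*\circ v_a$ and the identity on the cylinder, via the integration operator $K_a(\phi+dr\wedge\omega):=\int_a^r\omega(s)\,ds$. A direct Cauchy--Schwarz estimate, now trivial because the volume form is $dr\wedge dvol_M$ with no weight,
\[
\|K_a\alpha\|^2_{L^2((0,1)\times M)}\le\int_0^1\!\!\int_a^1\|\omega(s)\|_M^2\,ds\,dr\le\|\omega\|^2_{L^2((0,1)\times M)},
\]
shows $K_a$ is bounded for \emph{all} $i$, with the analogous bound on $d_{i-1}K_a\alpha$ when $\alpha\in\Omega^i_2$. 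A direct computation with $d=d_M+dr\wedge\partial_r$ on the product yields the smooth identity
\[
d_{i-1}(K_a\alpha)+K_a(d_i\alpha)=\alpha-\pi^*(v_a(\alpha)),
\]
valid pointwise on $\Omega^i_2((0,1)\times M)$.

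From this chain homotopy it follows that $\pi^*$ and $v_a$ induce mutually inverse isomorphisms between the cohomology of $(\Omega^*_2((0,1)\times M),d_*)$ and that of $(\Omega^*_2(M,g),d_*)$; by Proposition \ref{zaq} these are exactly $H^i_{2,max}((0,1)\times M,dr\otimes dr+g)$ and $H^i_{2,max}(M,g)$, completing the proof. The only mildly delicate point, rather than any serious obstacle, is choosing $a$ outside the measure-zero exceptional set on which either $\phi(a,\cdot)$ or $d_M\phi(a,\cdot)$ fails to lie in $L^2(M,g)$; this is a routine application of Fubini, and once it is handled everything reduces to the bounded operator estimates and the smooth homotopy identity above.
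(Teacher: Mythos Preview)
Your proof is correct, and it is precisely the cylinder analogue of the argument the paper gives for Theorem~\ref{lll}: the pullback $\pi^*$, the evaluation $v_a$, and the integration homotopy $K_a$ with the identity $d K_a + K_a d = \mathrm{id} - \pi^* v_a$, where in the product case the weight $r^{c(f-2i)}$ disappears so all estimates hold uniformly in $i$. The paper itself does not spell this out but simply refers to Cheeger~\cite{C}, p.~115, where exactly this argument is carried out; so your approach coincides with the one the paper intends.
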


\begin{proof}
See \cite{C} pag 115.
\end{proof}



\section{ $L^{2}$ Hodge and de Rham theorems}

Before starting the section we make a remark about the notation. Given an open subset $U\subset X$ with $\mathcal{D}(U,d_{max/min,i})$ we mean the domain of $d_{max/min,i}$ in $L^{2}\Omega^{i}(reg(U),g|_{reg(U)})$ Given a complex of sheaves $(\mathcal{L}^{*}, d_{*})$ over $X$ and an open subset $U$ of $X$ with the symbol $H^{i}(\mathcal{L}^{*}(U), d_{*})$ we mean the $i-$th cohomology group of the complex $$...\stackrel{d_{i-2}}{\rightarrow}\mathcal{L}^{i-1}(U)\stackrel{d_{i-1}}{\rightarrow}\mathcal{L}^{i}(U)\stackrel{d_{i}}{\rightarrow}\mathcal{L}^{i+i}(U)\stackrel{d_{i+1}}{\rightarrow}...$$ Finally with $\mathbb{H}^{i}(\mathcal{L}^{*}, d_{*})$ we mean the $i-$th cohomology sheaf associated to the complex $(\mathcal{L}^{*}, d_{*}).$

\begin{teo} Let $X$ be a compact and oriented smoothly stratified pseudomanifold of dimension $n$ with a  Thom-Mather stratification  $\mathfrak{X}$. Let $g$ be a quasi  edge metric with  weights on $reg(X)$, see definition \ref{zedge}. Let $\mathcal{R}_{0}$ be the stratified coefficient system made of  the pair of coefficient systems given by $(X-X_{n-1})\times \mathbb{R}$ over $X-X_{n-1}$ where the fibers $\mathbb{R}$ have the discrete topology  and the constant $0$ system on $X_{n-1}$. Let $p_{g}$ be the general perversity associated to the metric $g$, see definition \ref{pim}. Then, for all $i=0,...,n$,  the following isomorphisms holds:
\begin{equation}
I^{q_{g}}H^{i}(X,\mathcal{R}_{0})\cong H_{2,max}^{i}(reg(X),g)\cong \mathcal{H}_{abs}^{i}(reg(X),g)
\label{max}
\end{equation}
\begin{equation}
I^{p_{g}}H^{i}(X, \mathcal{R}_{0})\cong H_{2,min}^{i}(reg(X),g)\cong \mathcal{H}_{rel}^{i}(reg(X),g)
\label{min}
\end{equation} 
where $q_{g}$ is the complementary perversity of $p_{g}$, that is, $q_{g}=t-p_{g}$ and $t$ is the usual top perversity. In particular, for all $i=0,...,n$ the groups $$H_{2,max}^{i}(reg(X),g),\ H_{2,min}^{i}(reg(X),g),\ \mathcal{H}_{abs}^{i}(reg(X),g),\ \mathcal{H}_{rel}^{i}(reg(X),g)$$ are all finite dimensional.
\label{ris}
\end{teo}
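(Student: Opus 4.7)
The plan is to use the sheaf-theoretic strategy announced in the introduction, following the paradigm of Nagase \cite{MAN} and Hunsicker--Mazzeo \cite{HM}, and to invoke Friedman's axiomatic characterization (Theorem \ref{tre}). For each $i$ I would first define a sheaf $\mathcal{L}^{i}_{max}$ on $X$ whose sections over an open set $U\subset X$ are the elements of $\mathcal{D}(U,d_{max,i})$, i.e.\ the $L^{2}$ $i$-forms on $U\cap reg(X)$ whose distributional differential is also $L^{2}$. A standard partition of unity argument on $X$ (multiplication by a smooth compactly supported bounded function preserves the maximal domain) shows that $(\mathcal{L}^{*}_{max},d_{max,*})$ is a complex of fine sheaves whose global sections are exactly the complex $(L^{2}\Omega^{*}(reg(X),g),d_{max,*})$, and hence compute $H^{*}_{2,max}(reg(X),g)$.

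Next I would verify that this complex satisfies Friedman's axioms $(AX1)_{p_{g},\mathfrak{X},\mathcal{R}\otimes\mathcal{O}}$. Axiom (1) on $reg(X)$ is the usual Poincar\'e lemma for smooth $L^{2}$ forms on small Euclidean balls, combined with Proposition \ref{zaq}. For axiom (2) at a point $x\in Y$ in a singular stratum, definition \ref{zedge} provides a fundamental system of neighborhoods $\pi_{Y}^{-1}(U_{\epsilon})$ diffeomorphic to $U\times C_{\epsilon}(L_{Y})$ with metric quasi-isometric to $dr\otimes dr+h_{U}+r^{2c_{Y}}g_{L_{Y}}$. Peeling off the $U$ factor by Proposition \ref{ioi} and applying Theorem \ref{lll} together with Remark \ref{sdds} reduces the stalk cohomology at $x$ to
\[
H^{i}(\mathcal{L}^{*}_{max,x})\cong \left\{\begin{array}{ll}H^{i}_{2,max}(reg(L_{Y}),g_{L_{Y}}) & i<\tfrac{l_{Y}}{2}+\tfrac{1}{2c_{Y}}\\ 0 & \text{otherwise,}\end{array}\right.
\]
and the vanishing range is precisely $i>p_{g}(Y)$. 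Axiom (3), the attachment quasi-isomorphism up to $p_{g}(Z)$, is obtained from the same local model, because the comparison map induced by the inclusion $U\times C_{\epsilon}^{*}(L_{Y})\hookrightarrow U\times C_{\delta}^{*}(L_{Y})$ was shown, in Remark \ref{sdds}, to be an isomorphism on maximal $L^{2}$ cohomology in the relevant degrees. With the axioms in hand, Theorem \ref{tre} together with Remark \ref{ooo} and Theorem \ref{mzb} yields \eqref{max}, and the finite dimensionality of $I^{q_{g}}H^{i}(X,\mathcal{R}_{0})$ transfers to $H^{i}_{2,max}(reg(X),g)$; then Proposition \ref{qwe} furnishes the Hodge isomorphism with $\mathcal{H}^{i}_{abs}(reg(X),g)$.

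For the minimal case I would repeat the construction with $\mathcal{L}^{i}_{min}(U):=\mathcal{D}(U,d_{min,i})$, again obtaining a complex of fine sheaves. The local cone computation dual to Theorem \ref{lll} (accessible via the Hodge star together with the identity $(d_{min})^{*}=\delta_{max}$, or equivalently from the Fredholm duality Proposition \ref{fred} applied inductively) produces a vanishing threshold at $i>q_{g}(Y)$, so the complex satisfies $(AX1)_{q_{g},\mathfrak{X},\mathcal{R}\otimes\mathcal{O}}$, and Theorem \ref{tre}, Remark \ref{ooo} and Theorem \ref{mzb} give \eqref{min}, with the Hodge isomorphism following again from Proposition \ref{qwe}.

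The hardest step will be the inductive verification of axiom (2) at strata of positive depth. The induction is carried out on the depth of $X$, and at each stage Theorem \ref{lll} on the link $L_{Y}$ requires, in the odd-dimensional case with $c_{Y}\geq 1$, the closed-range hypothesis of Corollary \ref{edo}. This closure is not automatic, so the inductive statement must be strengthened to include finite dimensionality of all intermediate maximal and minimal $L^{2}$ cohomologies of the links; the finite dimensionality produced by the outer argument at each depth then feeds back, via Proposition \ref{qwe}, to close the range of the differential on the link and enables the next inductive step. The secondary technical point is a careful identification, at the level of sheaves, of the attachment morphism with the pullback by the inclusion of a punctured cone neighborhood, so that the local cohomology computation really certifies axiom (3).
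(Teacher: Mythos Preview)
Your treatment of the maximal case follows the paper's line: induction on depth, verification of Friedman's axioms for the fine-sheaf complex $(\mathcal{L}^*_{max},d_{max,*})$, with the inductive hypothesis on the link supplying the finite dimensionality needed for Corollary \ref{edo}. One correction: your justification of axiom (3) invokes Remark \ref{sdds}, which concerns the comparison $C_\epsilon^*(L)\hookrightarrow C_\delta^*(L)$ between punctured cones of different radii (this is what makes the stalk a well-defined limit). The attachment map is a different comparison, between a neighborhood $U\times C(L_Z)$ and the same neighborhood with the stratum $U\times\{v\}$ removed; the paper handles this separately in Proposition \ref{bea} and Corollary \ref{pietro}, the point being that $\pi^*\omega$ always lies in $i_{v*}i_v^*\mathcal{L}^*_{2,max}(C(L))$, so the homotopy formula \eqref{oooo} runs there in every degree.

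The genuine gap is the minimal case. The assignment $U\mapsto\mathcal{D}(U,d_{min,i})$ is not a presheaf: if $\omega\in\mathcal{D}(V,d_{min,i})$ is approximated in graph norm by $\omega_j\in C_0^\infty\Omega^i(reg(V))$ and $U\subset V$, the restrictions $\omega_j|_U$ have no reason to be compactly supported in $reg(U)$, so there is no restriction map into $\mathcal{D}(U,d_{min,i})$. Even if you sheafify some variant, the global sections over compact $X$ will be forms that lie \emph{locally} in a minimal domain, and this is strictly larger than $\mathcal{D}(reg(X),d_{min,i})$ because membership in the global minimal domain is a non-local condition; the hypercohomology would therefore not compute $H^i_{2,min}(reg(X),g)$. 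The paper bypasses this entirely by duality: once \eqref{max} is established and $H^i_{2,max}(reg(X),g)$ is finite dimensional, the Hodge star identifies $(L^2\Omega^*,d_{min,*})$ with the adjoint Hilbert complex of $(L^2\Omega^*,d_{max,*})$, so Proposition \ref{fred} gives $H^i_{2,min}(reg(X),g)\cong H^{n-i}_{2,max}(reg(X),g)$, and Poincar\'e duality for intersection cohomology (Theorem \ref{mzb}) then yields \eqref{min}.
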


\begin{teo} Let $X$ be as in the previous theorem. Let $p$ a general perversity in the sense of Friedman on $X$. If $p$ satisfies the following conditions:
\begin{equation}
 \left\{  
\begin{array}{ll}
p\geq \overline{m}\\
p(Y)=0 & if\ cod(Y)=1
\end{array}
\right.
\end{equation}
then there exists $g$, a quasi edge edge metric with weights on $reg(X)$, such that
\begin{equation}
I^{p}H^{i}(X, \mathcal{R}_{0})\cong H_{2,min}^{i}(reg(X),g)\cong \mathcal{H}_{rel}^{i}(reg(X),g).
\label{minn}
\end{equation} 
Conversely if $p$ satisfies:
\begin{equation}
 \left\{  
\begin{array}{ll}
p\leq \underline{m} \\
p(Y)=-1 & if\ cod(Y)=1
\end{array}
\right.
\end{equation}
then, also in this case, there exists a quasi edge metric with weights $h$ on $reg(X)$ such that 
\begin{equation}
I^{p}H^{i}(X,\mathcal{R}_{0})\cong H_{2,max}^{i}(reg(X),h)\cong \mathcal{H}_{abs}^{i}(reg(X),h).
\label{maxx}
\end{equation}
\label{new}
\end{teo}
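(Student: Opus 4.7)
The strategy is to reduce both statements to Theorem \ref{ris}. By Proposition \ref{top}, for any prescribed collection of positive reals $\{c_Y\}_Y$ indexed by the singular strata, there exists a quasi rigid iterated edge metric with weights on $reg(X)$ having these as weights; in particular this is a quasi edge metric with weights. Hence it suffices to show, under the stated hypotheses, that there exist weights $\{c_Y\}_Y$ such that the associated perversity $p_g$ of Definition \ref{pim} equals $p$ in the first case, respectively $t-p$ in the second case. The isomorphisms \eqref{minn} and \eqref{maxx} then follow directly from \eqref{uuu} and \eqref{fff} of Theorem \ref{ris}, using in the second case that $q_h = t - p_h$.

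For the first case, fix a singular stratum $Y$, set $k = cod(Y)$ and $l = l_Y = k-1$. When $k = 1$ we have $l = 0$, so Definition \ref{pim} forces $p_g(Y) = 0$, and the hypothesis $p(Y) = 0$ matches this automatically (independently of $c_Y$). For $k \geq 2$ one has $\overline{m}(Y) = \lfloor (k-1)/2 \rfloor$, which equals $l/2$ when $l$ is even and $(l-1)/2$ when $l$ is odd, so Definition \ref{pim} rewrites as
$$p_g(Y) \;=\; \overline{m}(Y) \;+\; \begin{cases} [[\tfrac{1}{2c_Y}]], & l \text{ even and nonzero}, \\ [[\tfrac{1}{2} + \tfrac{1}{2c_Y}]], & l \text{ odd}. \end{cases}$$
Set $j := p(Y) - \overline{m}(Y)$, which is a nonnegative integer by assumption. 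In the even case, $[[\tfrac{1}{2c}]] = j$ exactly when $\tfrac{1}{2(j+1)} \leq c < \tfrac{1}{2j}$ for $j \geq 1$, and when $c \geq \tfrac{1}{2}$ for $j=0$. In the odd case, $[[\tfrac{1}{2} + \tfrac{1}{2c}]] = j$ exactly when $\tfrac{1}{2j+1} \leq c < \tfrac{1}{2j-1}$ for $j \geq 1$, and when $c \geq 1$ for $j=0$. In either case the set of admissible $c_Y > 0$ is nonempty, so we may choose weights realizing $p_g(Y) = p(Y)$ on every singular stratum. The resulting metric $g$ then satisfies \eqref{minn} by Theorem \ref{ris}.

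For the second case, given $p \leq \underline{m}$ with $p(Y) = -1$ on codimension one strata, set $p' := t - p$. Then $p' \geq t - \underline{m} = \overline{m}$, and for $cod(Y) = 1$ one has $t(Y) = -1$, so $p'(Y) = -1 - (-1) = 0$. Thus $p'$ satisfies the hypotheses of the first case, and by what was just proved there exists a quasi edge metric with weights $h$ on $reg(X)$ with $p_h = p'$, equivalently $q_h = t - p' = p$. Applying the first isomorphism \eqref{fff} of Theorem \ref{ris} yields \eqref{maxx}. The only real issue in the argument is the elementary bookkeeping of the two arithmetic equations $[[\tfrac{1}{2c}]] = j$ and $[[\tfrac{1}{2} + \tfrac{1}{2c}]] = j$; the analytic and sheaf-theoretic content is already fully encoded in Theorem \ref{ris} and the existence result Proposition \ref{top}.
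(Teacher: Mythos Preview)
Your proof is correct and follows essentially the same approach as the paper: reduce to Theorem \ref{ris} by choosing weights $c_Y$ so that $p_g = p$ (first case) or $p_h = t - p$ (second case), invoking Proposition \ref{top} for existence of the metric. Your version is in fact slightly more explicit than the paper's, since you write down the admissible intervals for $c_Y$ solving $[[\tfrac{1}{2c}]] = j$ and $[[\tfrac{1}{2} + \tfrac{1}{2c}]] = j$, whereas the paper simply asserts such $c_Y$ can be chosen.
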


Before proving these theorems we need some preliminary results.

\begin{prop} Let $X$ be  an oriented smoothly stratified pseudomanifold of dimension $n$ with a Thom-Mather stratification  and let $g$ a riemannian metric  on $reg(X)$. Consider, for every $i=0,...,n$, the following presheaf:
\begin{equation}
U\longmapsto \mathcal{D}(U,d_{max,i})= \left\{
\begin{array}{ll}
\mathcal{D}(U,d_{max,i})& U\cap X_{n-1}=\emptyset\\
\mathcal{D}(U-(U\cap X_{n-1}),d_{max,i})& U\cap X_{n-1}\neq \emptyset
\end{array}
\right.
\end{equation}
or
\begin{equation}
U\longmapsto \left\{
\begin{array}{ll}
\omega \in \Omega^{i}_{2}(U,g|_{U}) & U\cap X_{n-1}=\emptyset\\
\omega \in \Omega^{i}_{2}(reg(U),g|_{reg(U)}) & U\cap X_{n-1}\neq \emptyset
\end{array}
\right.
\end{equation}
Let $\mathcal{L}^{i}_{2,max}$ and $\mathcal{L}^{i}_{2}$ be the sheaves associated to the previous presheaves;  then for these sheaves we have the following explicit descriptions:
\begin{enumerate}
\item let U an open subset of $X$ then: $\mathcal{L}^{i}_{2,max}(U)\cong\{\omega\in L^{2}_{Loc}\Omega^{i}(reg(U),g|_{reg(U)}): \forall\ p\in U\ \exists\ V\ open\ neighbourhood\ of\ p\ in\ U\ such\ that$ $\omega|_{reg(V)}\in \mathcal{D}(reg(V), d_{max,i})\}.$ 
\item $\mathcal{L}^{i}_{2}(U)\cong\{\omega\in \Omega^{i}(reg(U),g|_{reg(U)}): \forall\ p\in U\ \exists\ V\ open\ neighbourhood\ of\ p\ in\ U\ such\ that$ $\omega|_{reg(V)}\in \Omega^{i}_{2}(reg(V),g|_{reg(V)})\}.$  
\item If $X$ is compact $\mathcal{L}^{i}_{2,max}(X)=\mathcal{D}(reg(X),d_{max,i})$. 
\item $\mathcal{L}^{i}_{2}(X)=\{\omega\in \Omega^{i}(reg(X)): \omega\in L^{2}\Omega^{i}(reg(X),g),$ $\ d_{i}\omega\in L^{2}\Omega^{i}(reg(X),g)\}.$ 
\item The complexes $\mathcal{L}^{i}_{2,max}$ and $\mathcal{L}^{i}_{2}$ are quasi isomorphic.
\end{enumerate}
\label{ddd}
\end{prop}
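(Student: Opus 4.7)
The plan is to address the five statements in order, with the first two being direct unwindings of the sheafification construction, the next two relying on compactness of $X$ together with a partition-of-unity argument, and the last one reducing to Proposition \ref{zaq} at the stalk level.

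For parts 1 and 2, I would argue as follows. A section of the sheaf $\mathcal{L}^{i}_{2,max}$ (resp. $\mathcal{L}^{i}_{2}$) over $U$ is, by construction, a locally compatible family of germs of the given presheaf. Since the distributional exterior derivative is a local operator, two local representatives that agree on an overlap automatically have the same differential there, so no further compatibility condition is imposed. This gives exactly the descriptions in items 1 and 2: an element of $\mathcal{L}^{i}_{2,max}(U)$ is an $L^{2}_{loc}$ form on $reg(U)$ which locally near every $p \in U$ lies in the maximal domain on some small $reg(V)$, and similarly for $\mathcal{L}^{i}_{2}(U)$.

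For part 3, the inclusion $\mathcal{D}(reg(X),d_{max,i}) \subseteq \mathcal{L}^{i}_{2,max}(X)$ is tautological. For the reverse, given $\omega \in \mathcal{L}^{i}_{2,max}(X)$, compactness of $X$ lets me extract a finite open cover $\{V_{1},\dots,V_{N}\}$ with $\omega|_{reg(V_{\alpha})} \in \mathcal{D}(reg(V_{\alpha}), d_{max,i})$. Writing $\eta_{\alpha} := d_{max,i}(\omega|_{reg(V_{\alpha})})$ and using uniqueness of the weak differential to glue, one obtains $\eta \in L^{2}\Omega^{i+1}(reg(X),g)$ with $\eta|_{reg(V_{\alpha})} = \eta_{\alpha}$; finiteness of the cover also forces $\omega \in L^{2}\Omega^{i}(reg(X),g)$. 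To verify $d_{max,i}\omega = \eta$, I take $\zeta \in C^{\infty}_{0}\Omega^{i+1}(reg(X))$ and use that $\mathrm{supp}(\zeta)$ is a compact subset of the \emph{smooth} manifold $reg(X)$, so a standard smooth partition of unity $\{\phi_{\alpha}\}$ subordinate to $\{reg(V_{\alpha})\}$ exists on $\mathrm{supp}(\zeta)$. The decomposition $\zeta = \sum_{\alpha} \phi_{\alpha}\zeta$ with $\phi_{\alpha}\zeta \in C^{\infty}_{0}\Omega^{i+1}(reg(V_{\alpha}))$ then gives
$$\langle \omega,\delta_{i}\zeta \rangle_{L^{2}(reg(X))} \;=\; \sum_{\alpha}\langle \omega,\delta_{i}(\phi_{\alpha}\zeta)\rangle_{L^{2}(reg(V_{\alpha}))} \;=\; \sum_{\alpha}\langle \eta_{\alpha},\phi_{\alpha}\zeta\rangle \;=\; \langle \eta,\zeta\rangle.$$
Part 4 follows by the same gluing principle, but is simpler since smoothness and global $L^{2}$-ness are directly verified from compactness and the local descriptions.

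For part 5, I would show that the natural inclusion $\mathcal{L}^{*}_{2} \hookrightarrow \mathcal{L}^{*}_{2,max}$ is a quasi-isomorphism of complexes of sheaves by checking it at stalks. Since sheafification preserves stalks and since cohomology commutes with filtered colimits, one has
$$\mathbb{H}^{i}(\mathcal{L}^{*}_{2,max})_{p} \;=\; \varinjlim_{U \ni p} H^{i}(\mathcal{D}(reg(U),d_{max,*})), \qquad \mathbb{H}^{i}(\mathcal{L}^{*}_{2})_{p} \;=\; \varinjlim_{U \ni p} H^{i}(\Omega^{*}_{2}(reg(U),g|_{reg(U)})),$$
and Proposition \ref{zaq} applied to the Riemannian manifold $(reg(U),g|_{reg(U)})$ gives the required isomorphism on each term of the colimit. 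The main obstacle in the whole argument is the bookkeeping in part 3: ensuring that the integration-by-parts computation passes cleanly despite the singular geometry. The saving observation is that test forms lie in $C^{\infty}_{0}\Omega^{*}(reg(X))$, so partitions of unity only need to be produced on compact subsets of the smooth manifold $reg(X)$, and the (possibly unbounded) derivatives of cut-off functions with respect to $g$ never intervene.
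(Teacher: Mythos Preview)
Your argument is correct and follows the same route as the paper: parts 1--2 via the sheafification/stalk description, parts 3--4 via compactness (your finite-cover and partition-of-unity computation spells out what the paper leaves as ``an immediate consequence of the compactness of $X$''), and part 5 via Proposition \ref{zaq} applied stalkwise. The paper's own proof is essentially a one-line sketch of exactly these steps, so there is no substantive difference in approach.
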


\begin{proof}
The first and the second statement follow from the fact that the sheaves $\mathcal{L}^{i}_{2,max},\ \mathcal{L}^{i}_{2}$ and the respective sheaves at the right of $\cong$ have isomorphic stalks. The third and fourth statement are an immediate consequences of the compactness of $X$. The fifth statement follows immediately from proposition \ref{zaq}. 
\end{proof}

\begin{prop} 
Let $X$ be an oriented smoothly stratified pseudomanifold with a Thom-Mather stratification of dimension $n$ such that for each stratum $Y$ the link $L_{Y}$ is compact and $g$ a quasi rigid iterated  edge metric with weights on $reg(X)$. 
Then, for each $i=0,...,n$, $\mathcal{L}^{i}_{2,max}$ and $\mathcal{L}^{i}_{2}$ are  fine sheaves.
\label{cfs}
\end{prop}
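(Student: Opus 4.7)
The strategy is to show fineness by exhibiting, for any locally finite open cover $\{U_\alpha\}$ of $X$, a subordinate partition of unity $\{\phi_\alpha\}$ consisting of continuous functions on $X$ whose restrictions to $reg(X)$ are smooth with $\phi_\alpha$ and $d\phi_\alpha$ bounded in the metric $g$. Once such a partition exists, multiplication by $\phi_\alpha$ defines an endomorphism of both sheaves: boundedness of $\phi_\alpha$ preserves the $L^2$ condition on forms, while boundedness of $d\phi_\alpha$ together with the Leibniz identity $d(\phi_\alpha\omega)=d\phi_\alpha\wedge\omega+\phi_\alpha\, d\omega$ preserves the condition $d\omega\in L^2$, both in the weak sense (for $\mathcal{L}^i_{2,max}$) and in the strong sense (for $\mathcal{L}^i_2$). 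Since these endomorphisms have support in $U_\alpha$ and sum to the identity, the sheaves are fine.

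I would construct the partition by induction on the depth of $X$. In depth zero $X$ is a smooth manifold and the claim is classical. For the inductive step I would use the Thom-Mather control data $(T_Y,\pi_Y,\rho_Y)$ of each singular stratum $Y$ to build bump functions of the form $\chi(\rho_Y)\cdot (\psi\circ\pi_Y)$, where $\chi$ is a smooth cutoff on $[0,2)$ and $\psi$ is a smooth bump on $Y$; after handling a neighborhood of the deepest strata this way, the inductive hypothesis, applied to the stratified subspace of strictly smaller depth obtained by removing such a neighborhood, supplies the remaining partition functions. The trivializations in definition \ref{edge} allow us to compare these functions with the model situation, and a standard rescaling and gluing argument produces a partition $\{\phi_\alpha\}$ subordinate to any prescribed locally finite cover.

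The key computation, which is also the main obstacle, is the bound on $|d\phi_\alpha|_g$ near a singular stratum $Y$. In a trivializing chart $\pi_Y^{-1}(U)\cong U\times C(L_Y)$ with metric quasi-isometric to $dr\otimes dr+h_U+r^{2c_Y}g_{L_Y}$, the dual norm on the link covector direction is $r^{-c_Y}$ times the $g_{L_Y}$-norm, which blows up as $r\to 0$. Thus partition functions cannot depend on the link coordinate in any effective way near $Y$. This is precisely why my construction uses only compositions with $\pi_Y$ and $\rho_Y$: such $\phi_\alpha(y,r)$ has
\begin{equation}
|d\phi_\alpha|_g^2=|d_y\phi_\alpha|_{h_U}^2+|\partial_r\phi_\alpha|^2,
\end{equation}
with no inverse power of $r$, and hence is uniformly bounded on $reg(X)$. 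The compactness of $L_Y$ at each stage of the induction ensures that the remark after definition \ref{edge} together with proposition \ref{tip} lets us patch local models along the link directions without losing the bounded-differential property, so the induction goes through and both $\mathcal{L}^i_{2,max}$ and $\mathcal{L}^i_2$ are fine.
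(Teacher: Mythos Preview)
Your approach is correct and matches the paper's: both reduce fineness to the existence, for any open cover of $X$, of a subordinate partition of unity whose members are continuous on $X$, smooth on $reg(X)$, and have $g$-bounded differential, and both construct such a partition by induction on $\operatorname{depth}(X)$. The only organizational difference is that the paper carries the induction through the \emph{links} (writing bump functions on a distinguished neighbourhood as products $\eta(y)\,\xi(r)\,\tau(z)$ and obtaining $\tau$ on $L_Y$ from the inductive hypothesis), whereas you carry it through the \emph{complement of the deepest strata}; when the chart actually contains a stratum point, continuity at the cone tip forces $\tau\equiv 1$ in the paper's product and the two constructions coincide. Your discussion of the $r^{-c_Y}$ blow-up of link covectors makes explicit exactly why no nontrivial dependence on the link coordinate is admissible near the tip, a point the paper's proof leaves implicit.
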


\begin{proof} From the description of the sheaves $\mathcal{L}^{i}_{2,max},\ \mathcal{L}^{i}_{2}$ given in prop. \ref{ddd} it follows that  in order to prove this proposition  it is sufficient to show that on $X$, given an open cover $\mathcal{U}_{A}=\{U_{\alpha}\}_{\alpha\in A}$, there is a bounded partition of unity with bounded differential subordinate to $\mathcal{U}_{A}$, that is a family of functions $\lambda_{\alpha}:X\rightarrow [0,1], \alpha\in A$ such that
\begin{enumerate}
\item Each $\lambda_{\alpha}$ is continuous and $\lambda_{\alpha}|_{reg(X)}$ is smooth.
\item  $supp(\lambda_{\alpha})\subset U_{\alpha}$ for some $\alpha\in A$.
\item $\{supp(\lambda_{\alpha})\}_{\alpha\in A}$ is a locally finite cover of $X$.
\item For each $x\in X$ $\sum_{\alpha\in A}\lambda_{\alpha}(x)=1$.
\item There are constants $C_{\alpha}>0$ such that each $\lambda_{\alpha}$ satisfies $\|d(\lambda_{\alpha}|_{reg(X)})\|_{L^{2}(reg(X),g)}\leq C_{\alpha}$.
\label{ygn}
\end{enumerate}

The proof is given by induction on the depth of $X$. If $depth(X)=0$ the statement is immediate because in this case $X$ is a differentiable  manifold.
Suppose now that the statement is true if $depth(X)\leq k-1$ and that $depth(X)=k$.
Let $\mathcal{U}_{J}=\{U_{j}\}_{j\in J}$ be a locally finite refinement of $\mathcal{U}_{A}$ such that for each $U_{J}$  there is a diffeomorphism $\phi_{j}:U_{j}\rightarrow \mathbb{R}^n$ if $U_{j}\cap X_{n-1}=\emptyset$ or, in the case $U_{j}\cap X_{n-1}\neq \emptyset$, an isomorphism $\phi_{j}:U_{j}\rightarrow W_{j}\subset \mathbb{R}^{k}\times C(L_{j})$ between $U_{j}$ and an open subset, $W_{j}$, of the product $\mathbb{R}^{k}\times C(L_{j})$ for some $k<n$ and stratified space $L_{j}$.\\ Let $\mathcal{V}_{J}=\{V_{j}\}_{j \in J}$  a shrinking of $\mathcal{U}_{J}$; this means that $\mathcal{V}_{J}$ is a refinement of $\mathcal{U}_{J}$ such that if $V_{j}\subset U_{j}$ then $\overline{V_{j}}\subset U_{j}$. 
  Now let $V_{j}\in \mathcal{V}_{J}$, $U_{j}\in \mathcal{U}_{J}$  such that $V_{j}\subset U_{j}$ and $U_{j}\cap X_{n-1}=\emptyset$. Let $\psi_{j}:\mathbb{R}^{n}\rightarrow [0,1]$ be a smooth function such that $\psi_{j}|_{\overline{\phi_{j}(V_{j})}}=1$ and $supp(\psi_{j})\subset \phi_{j}(U_{j})$. Define $\lambda_{j}:X\rightarrow [0,1], \lambda_{j}:=\psi_{j}\circ \phi_{j}$.  Now let $V_{j}\in \mathcal{V}_{J}$, $U_{j}\in \mathcal{U}_{J}$ such that $V_{j}\subset U_{j}$ and $U_{j}\cap X_{n-1} \neq \emptyset$. We can take two functions $\eta:\mathbb{R}^{k}\rightarrow [0,1]$, $\xi:[0,1)\rightarrow [0,1]$ and, using the inductive hypothesis and the fact that $L_{Y}$ is compact, a third function $\tau_{j}:L_{j}\rightarrow [0,1]$ smooth on $reg(L_{j})$ and with bounded differential such that 
  $\psi_{j}:=\eta_{j}\xi_{j}\tau_{j}$ is a  a continuous function on $\mathbb{R}^{k}\times C(L_{j})\rightarrow [0,1]$ smooth on the regular part and with bounded differential such that $\psi_{j}|_{\overline{\phi_{j}(V_{j})}}=1$ and $supp(\psi_{j})\subset \phi_{j}(U_{j})$. Also in this case define $\lambda_{j}:X\rightarrow [0,1],\ \lambda_{j}:=\psi_{j}\circ \phi_{j}$. Finally define 
\begin{equation}
\mu_{j}:X\rightarrow [0,1], \mu_{j}=\frac{\lambda_{j}}{\sum_{j\in J}\lambda_{j}}
\end{equation}
$\{\mu_{j}\}_{\in J}$ is a partition of unity with bounded differential subordinated to the cover $\mathcal{U}_{J}$ and therefore from this follows immediately that there exist a partition of unity with bounded differential subordinated to the cover $\mathcal{U}_{A}$.
Now the statement of the proposition is an immediate consequence.
\end{proof}

Now we state the last proposition that we will use in the proof of theorem \ref{ris}.

\begin{prop} Let $L$ be a compact smoothly stratified pseudomanifold with a Thom-Mather stratification and let $g_{L}$ be a riemannian metric  on $reg(L)$. Let $C(L)$ be the cone over $L$ and on $reg(C(L))$ consider the metric $dr\otimes dr+r^{2c}g_{L}$. 
Finally consider on $C(L)$ the complex of sheaves $(\mathcal{L}^{*}_{2,max}, d_{max,*})$ associated to the metric $dr\otimes dr+r^{2c}g_{L}$. Then the canonical inclusion $$i_{v}:C(L)-\{v\}\longrightarrow C(L),$$ where $v$ is the vertex of the cone, induces a quasi-isomorphism between   the complexes $$(\mathcal{L}^{*}_{2,max},d_{max,*})\ and\ (i_{v*}i_{v}^{*}\mathcal{L}^{*}_{2,max},d_{max,*})$$  for  $i\leq [[\frac{dimL}{2}+\frac{1}{2c}]]$.
\label{bea}
\end{prop}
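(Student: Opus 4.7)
The plan is to verify the claim stalkwise on the cohomology sheaves. At any point $p \in C(L) - \{v\}$ the inclusion $i_v$ is a homeomorphism on a neighborhood of $p$, so the adjunction morphism $\mathcal{L}^*_{2,max} \to i_{v*}i_v^*\mathcal{L}^*_{2,max}$ is tautologically an isomorphism of complexes of sheaves there. Therefore everything reduces to showing that the induced map on stalks of cohomology sheaves at the vertex $v$ is an isomorphism in degrees $i \leq [[\dim L/2 + 1/(2c)]]$.

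To compute the stalks at $v$, note that by the definition of the cohomology sheaf one has
\begin{align*}
\mathbb{H}^i(\mathcal{L}^*_{2,max})_v &= \varinjlim_{\epsilon > 0} H^i\bigl(\mathcal{L}^*_{2,max}(C_\epsilon(L))\bigr), \\
\mathbb{H}^i(i_{v*}i_v^*\mathcal{L}^*_{2,max})_v &= \varinjlim_{\epsilon > 0} H^i\bigl(\mathcal{L}^*_{2,max}(C_\epsilon(L) - \{v\})\bigr),
\end{align*}
the direct limits running over open cone neighborhoods of $v$. Using the explicit description of sections from Proposition \ref{ddd} together with the quasi-isomorphism of Proposition \ref{zaq}, I would identify $H^i\bigl(\mathcal{L}^*_{2,max}(C_\epsilon(L) - \{v\})\bigr)$ with $H^i_{2,max}(C^*_\epsilon(L), g_c)$, since on the punctured cone the local $L^2$ condition globalizes by a partition of unity argument (via fineness, Proposition \ref{cfs}). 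A similar identification will hold for the unpunctured case: the local $L^2$ condition imposed at $v$ is precisely the requirement that a representative belong to $\mathcal{D}(d_{max,i})$ on some cone neighborhood $C_{\epsilon'}(L)$, so upon restriction to $C_{\epsilon'}(L)$ the section becomes a genuine element of the maximal domain, and the corresponding cohomology is again $H^i_{2,max}(C^*_{\epsilon'}(L), g_c)$.

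By Remark \ref{sdds}, the inclusions $C^*_{\epsilon'}(L) \hookrightarrow C^*_\epsilon(L)$ induce isomorphisms on $H^i_{2,max}$ in the range $i < \dim L/2 + 1/(2c)$, so both direct limits stabilize to $H^i_{2,max}(C^*(L), g_c)$. By Theorem \ref{lll} this equals $H^i_{2,max}(L, g_L)$ for $i < \dim L/2 + 1/(2c)$, i.e.\ exactly for $i \leq [[\dim L/2 + 1/(2c)]]$. Under these identifications, the natural restriction $\mathbb{H}^i(\mathcal{L}^*_{2,max})_v \to \mathbb{H}^i(i_{v*}i_v^*\mathcal{L}^*_{2,max})_v$ corresponds to the identity on $H^i_{2,max}(L, g_L)$, because the pullback class $[\pi^*\phi]$ used in the proof of Theorem \ref{lll} is manifestly an $L^2$ form on the whole cone whenever $\phi \in L^2\Omega^i(L, g_L)$ and $i < \dim L/2 + 1/(2c)$, providing a compatible choice of representative on both sides.

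The main obstacle, and what requires the most care, is the precise identification of $H^i\bigl(\mathcal{L}^*_{2,max}(C_\epsilon(L))\bigr)$ with $H^i_{2,max}(C^*_\epsilon(L), g_c)$: one needs to verify both that every closed section extends or restricts to a genuine element of $\mathcal{D}(d_{max,i})$ after shrinking $\epsilon$, and that two exact classes there lift to exact classes in the sheaf sense. Once this identification is achieved, the degree cutoff $i \leq [[\dim L/2 + 1/(2c)]]$ appearing in the proposition is forced directly by the degree cutoff in Theorem \ref{lll}.
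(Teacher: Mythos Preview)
Your overall strategy---stalkwise verification, reducing to the vertex, and computing direct limits over cone neighborhoods---matches the paper's. The identification of $\mathbb{H}^i(\mathcal{L}^*_{2,max})_v$ with $H^i_{2,max}(C^*_\epsilon(L),g_c)$ for the \emph{unpunctured} side is fine, and your compatibility check via $[\pi^*\phi]$ is exactly what the paper does at the end.

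The gap is in your computation of the stalk cohomology of $i_{v*}i_v^*\mathcal{L}^*_{2,max}$ at $v$. You claim that $H^i\bigl(\mathcal{L}^*_{2,max}(C_\epsilon(L)-\{v\})\bigr)\cong H^i_{2,max}(C^*_\epsilon(L),g_c)$ because ``the local $L^2$ condition globalizes by a partition of unity argument''. This fails: the punctured cone $(0,\epsilon)\times L$ is not compact near $r\to 0$, so a form that is locally in $\mathcal{D}(d_{max,i})$ at every point of the punctured cone need not lie in $L^2\Omega^i(C^*_\epsilon(L),g_c)$ globally. Fineness (Proposition~\ref{cfs}) gives you partitions of unity subordinate to open covers of $X$, but it does not turn locally $L^2$ sections over a noncompact open set into globally $L^2$ ones. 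In fact the two cohomologies disagree in high degrees: the paper shows $H^i(i_{v*}i_v^*\mathcal{L}^*_{2,max}(C(L)))\cong H^i_{2,max}(reg(L),g_L)$ for \emph{all} $i$, while $H^i_{2,max}(C^*(L),g_c)=0$ for $i\geq \dim L/2+1/(2c)$.

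The paper avoids this by never attempting that identification. Instead it runs the chain homotopy \eqref{oooo} from the proof of Theorem~\ref{lll},
\[
d_{i-1}(K_a\alpha)+K_a(d_i\alpha)=\alpha-\pi^*(v_a(\alpha)),
\]
directly on the complex $i_{v*}i_v^*\mathcal{L}^*_{2,max}(C(L))$. The point is that both $K_a$ and $\pi^*$ preserve the locally-$L^2$ condition away from $v$, and in particular $\pi^*\omega$ is a section of $i_{v*}i_v^*\mathcal{L}^i_{2,max}$ for \emph{every} $i$ (no degree restriction, since no $L^2$ condition is imposed at the vertex). This yields $H^i(i_{v*}i_v^*\mathcal{L}^*_{2,max}(C(L)))\cong H^i_{2,max}(reg(L),g_L)$ in all degrees, after which the comparison with \eqref{verde} in the range $i\leq[[\dim L/2+1/(2c)]]$ finishes the proof. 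Your argument can be repaired by replacing the partition-of-unity step with this direct homotopy computation.
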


\begin{proof}  We start the proof showing that the complexes $(\mathcal{L}^{*}_{2,max},d_{max,*})$ and $(i_{v*}i_{v}^{*}\mathcal{L}^{*}_{2,max},d_{max,*})$ are quasi isomorphic for  $i\leq [[\frac{dimL}{2}+\frac{1}{2c}]]$. This is equivalent to show that for each $x\in C(L)$ $$(\mathbb{H}^{i}(\mathcal{L}^{*}_{2,max},d_{max,*}))_{x}\cong (\mathbb{H}^{i}(i_{v*}i_{v}^{*}\mathcal{L}^{*}_{2,max},d_{max,*}))_{x}$$ where each term in the previous isomorphism is the stalk at the point $x$ of the $i-$th cohomology sheaf associated to $(\mathcal{L}^{*}_{2,max},d_{max,*})$ and $(i_{v*}i_{v}^{*}\mathcal{L}^{*}_{2,max},d_{max,*})$ respectively.  For every $i=0,...,dimL+1$ the sheaf $i_{v*}i_{v}^{*}\mathcal{L}^{i}_{2,max}$ is isomorphic to the following sheaf; let $U\subset C(L)$ an open subset then:

  $$i_{v*}i_{v}^{*}\mathcal{L}^{i}_{2,max}(U)\cong\{\omega\in L^{2}_{Loc}\Omega^{i}(reg(U),dr\otimes dr+r^{2c}g_{L}|_{reg(U)}): \forall\ p\in U-\{v\}\     \exists\ V\ open$$ $$ neighbourhood\ of\ p\ in\ U\ such\ that\ \omega|_{reg(V)}\in \mathcal{D}(reg(V), d_{max,i})\}.$$

From this fact and prop. \ref{ddd} it follows that for every $x\in C(L)-\{v\}$ 
\begin{equation}
(\mathbb{H}^{i}(\mathcal{L}^{*}_{2,max},d_{max,*}))_{x}\cong (\mathbb{H}^{i}(i_{v*}i_{v}^{*}\mathcal{L}^{*}_{2,max},d_{max,*}))_{x}.
\label{frutta}
\end{equation} 
Now by theorem  \ref{lll} and remark \ref{sdds} we know that for $i\leq [[\frac{dimL}{2}+\frac{1}{2c}]]$ $$(\mathbb{H}^{i}(\mathcal{L}^{*}_{2,max},d_{max,*}))_{v}\cong H^{i}(\mathcal{L}^{*}_{2,max}(C(L)),d_{max,*}) \cong H^{i}_{2,max}(reg(L),g_{L}).$$ Using the same techniques it is easy to show that for each $i$ $$(\mathbb{H}^{i}(i_{v*}i_{v}^{*}\mathcal{L}^{*}_{2,max},d_{max,*}))_{v}\cong H^{i}(i_{v*}i_{v}^{*}\mathcal{L}^{*}_{2,max}(C(L)),d_{max,*}).$$ Therefore we have to show that for $i\leq [[\frac{dimL}{2}+\frac{1}{2c}]]$ $$H^{i}(i_{v*}i_{v}^{*}\mathcal{L}^{*}_{2,max}(C(L)),d_{max,*})\cong H^{i}_{2,max}(reg(L),d_{max,*}).$$
 On the whole cone  $C(L)$ the main difference between the complexes  $(\mathcal{L}^{*}_{2,max},d_{max,*})$ and\\ $(i_{v*}i_{v}^{*}\mathcal{L}^{*}_{2,max},d_{max,*})$ is that for  each $\omega\in \mathcal{L}^{i}_{2,max}(L)$, by prop. \ref{lop}, $$\pi^{*}\omega\in \mathcal{L}^{i}_{2,max}(C(L))\ \text{if}\ \text{and}\ \text{only}\ \text{if}\ i<\frac{dimL}{2}+\frac{1}{2c}.$$
Instead $$\pi^{*}\omega\in i_{v*}i_{v}^{*}\mathcal{L}^{i}_{2,max}(C(L))\ \text{for}\ \text{every}\ i=0,...,dimL.$$ Therefore by the proof of the first part of  theorem \ref{lll} and in particular from \eqref{oooo}  follows that 
\begin{equation}
H^{i}(i_{v*}i_{v}^{*}\mathcal{L}^{*}_{2,max}(C(L)),d_{max,*})\cong H^{i}_{2,max}(reg(L), g_{L})\ \text{for}\ \text{every}\ i=0,...,dimL+1.
\label{atomi}
\end{equation}
But from  theorem \ref{edo} we know that 
\begin{equation}
H^{i}(\mathcal{L}^{*}_{2,max}(C(L)),d_{max,*})\cong H^{i}_{2,max}(reg(L), g_{L})\ for\ i\leq [[\frac{dimL}{2}+\frac{1}{2c}]].
\label{verde}
\end{equation} 
 So  for $i\leq [[\frac{dimL}{2}+\frac{1}{2c}]]$ $$(\mathbb{H}^{i}(i_{v*}i_{v}^{*}\mathcal{L}^{*}_{2,max},d_{max,*}))_{v}\cong (\mathbb{H}^{i}(\mathcal{L}^{*}_{2,max},d_{max,*}))_{v}$$ and therefore we can conclude that for the same $i$ the complexes $(\mathcal{L}^{*}_{2,max},d_{max,*})$\\ and $(i_{v*}i_{v}^{*}\mathcal{L}^{*}_{2,max},d_{max,*})$ are quasi-isomorphic.\\ Now let $j$ be the morphism between $(\mathcal{L}^{*}_{2,max},d_{max,*})$ and $(i_{v*}i_{v}^{*}\mathcal{L}^{*}_{2,max},d_{max,*})$ induced from $i_{v}:C(L)-\{v\}\rightarrow C(L)$. It is immediate to note that for each open subset $U\subset C(L)$ $j_{U}$ is just the inclusion of $\mathcal{L}^{*}_{2,max}(U)$ in $i_{v*}i_{v}^{*} \mathcal{L}^{*}_{2,max}(U)$. Therefore if we call  $j^{*}$  the morphism induced from $j$ between the cohomology sheaves $H^{i}(\mathcal{L}^{*}_{2,max},d_{max,*})$ and $H^{i}(i_{v*}i_{v}^{*} \mathcal{L}^{*}_{2,max},d_{max,*})$ it is immediate to note that $j^{*}$ induces the isomorphism \eqref{frutta}. Finally if we call $\phi$ and $\psi$ respectively the isomorphisms \eqref{atomi} and \eqref{verde} we have that for $i\leq [[\frac{dimL}{2}+\frac{1}{2c}]]$ $$\phi\circ j^{*}=\psi.$$ Therefore we can conclude that  
 $$j:(\mathcal{L}^{*}_{2,max},d_{max,*})\rightarrow (i_{v*}i_{v}^{*}\mathcal{L}^{*}_{2,max},d_{max,*})$$  is a quasi-isomorphism for  $i\leq [[\frac{dimL}{2}+\frac{1}{2c}]]$.
\end{proof}

\begin{cor} Let $(M,h)$ be an oriented riemannian manifold, let $L$ be a compact smoothly stratified pseudomanifold with a Thom-Mather stratification and let $g_{L}$ be a riemannian metric  on $reg(L)$. Consider now $M\times C(L)$ and on $reg(M\times C(L))$ consider the metric $h+dr\otimes dr+r^{2c}g_{L}$. Let $i_{M}:M\times C(L)-(M\times \{v\})\rightarrow M\times C(L)$ the canonical inclusion where $v$ is the vertex of the cone. Finally consider over $M\times C(L)$ the complex of sheaves $(\mathcal{L}^{*}_{2,max}, d_{max,*})$.
 Then the canonical inclusion $$i_{M}:M\times C(L)-(M\times\{v\})\longrightarrow M\times C(L)$$ induces a quasi-isomorphism between   the complexes $$(\mathcal{L}^{*}_{2,max},d_{max,*})\ and\ (i_{M*}i_{M}^{*}\mathcal{L}^{*}_{2,max},d_{max,*})$$  for  $i\leq [[\frac{dimL}{2}+\frac{1}{2c}]]$.
\label{pietro}
\end{cor}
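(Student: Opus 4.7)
The plan is to mimic the argument of Proposition \ref{bea} at the level of stalks, using a product local model near each point of $M\times\{v\}$. Since whether the canonical morphism $j:(\mathcal{L}^{*}_{2,max},d_{max,*})\to(i_{M*}i_{M}^{*}\mathcal{L}^{*}_{2,max},d_{max,*})$ is a quasi-isomorphism up to degree $[[\frac{\dim L}{2}+\frac{1}{2c}]]$ can be checked on cohomology sheaf stalks, I would first dispatch the points $x\in M\times C(L)-(M\times\{v\})$: any small enough neighbourhood $V$ of $x$ avoids $M\times\{v\}$, so $\mathcal{L}^{*}_{2,max}(V)=i_{M*}i_{M}^{*}\mathcal{L}^{*}_{2,max}(V)$ and $j_{V}$ is the identity. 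The stalks therefore agree at all such $x$ in every degree.

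The remaining and main case is a point $(m,v)\in M\times\{v\}$. I would choose as neighbourhood base sets of the form $W=U\times C_{\epsilon}(L)$, where $U\subset M$ is a relatively compact coordinate open set on which $h|_{U}$ is quasi-isometric to a flat metric. Exactly as in Proposition \ref{bea},
\begin{equation*}
(\mathbb{H}^{i}(\mathcal{L}^{*}_{2,max}))_{(m,v)}\;\cong\;\varinjlim_{W}\,H^{i}(\mathcal{L}^{*}_{2,max}(W),d_{max,*}),
\end{equation*}
and similarly for $i_{M*}i_{M}^{*}\mathcal{L}^{*}_{2,max}$, so the task reduces to producing product-analogues of Theorem \ref{lll} and of equation \eqref{atomi} on each $W$, compatibly with the inclusion $j_{W}$.

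For this I would adapt Section 2 to $U\times C^{*}(L)$ equipped with $h+dr\otimes dr+r^{2c}g_{L}$. Writing a form as $\alpha=\phi+dr\wedge\omega$ with $\phi,\omega$ $r$-dependent forms on $U\times L$, the operators $v_{a}(\alpha)=\phi(a)$, $K_{a}(\alpha)=\int_{a}^{r}\omega(s)ds$ and its limit $K_{0}(\alpha)$ still make sense; the bounds in Propositions \ref{lop}, \ref{yui}, \ref{qaz}, \ref{bau}, \ref{miao}, \ref{frau} and \ref{mr} are one-variable integrals in $r$ whose proofs transfer, the extra integration over $U$ merely passing as a multiplicative factor. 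Running the first half of the proof of Theorem \ref{lll} (using $v_{a}$ and $K_{a}$) and the argument yielding equation \eqref{atomi} then gives, for $i\leq[[\frac{\dim L}{2}+\frac{1}{2c}]]$,
\begin{equation*}
H^{i}(\mathcal{L}^{*}_{2,max}(W),d_{max,*})\;\cong\;H^{i}_{2,max}(U\times L,h+g_{L})\;\cong\;H^{i}(i_{M*}i_{M}^{*}\mathcal{L}^{*}_{2,max}(W),d_{max,*}),
\end{equation*}
both isomorphisms compatible with $j_{W}$; passing to the direct limit over $W$ then concludes. The main obstacle will be the bidegree bookkeeping: a form on $U\times C^{*}(L)$ splits by bidegree $(j,i-j)$ with respect to the $U\times L$ product structure, and each bidegree contributes an $r$-integral $\int_{0}^{1}r^{cf-2c(i-j)}dr$ whose convergence depends on $j$. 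What saves the argument is that for $i\leq[[\frac{\dim L}{2}+\frac{1}{2c}]]$ one has $i-j\leq i<\frac{\dim L}{2}+\frac{1}{2c}$ uniformly in $j$, yielding convergence simultaneously in every bidegree, which is precisely the threshold encountered in Proposition \ref{lop}.
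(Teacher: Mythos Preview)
Your stalk-by-stalk strategy and treatment of points off $M\times\{v\}$ are exactly as in the paper. At $(m,v)$, however, the paper takes a shorter route: rather than re-proving the cone estimates on $U\times C^{*}(L)$, it invokes Proposition \ref{ioi} (the K\"unneth-type fact $H^{i}_{2,max}((0,1)\times N)\cong H^{i}_{2,max}(N)$) to strip off the $U$-factor and reduce directly to the situation already handled in Proposition \ref{bea}. Thus the paper gets $H^{i}(\mathcal{L}^{*}_{2,max}(U\times C(L)))\cong H^{i}_{2,max}(reg(L),g_{L})$ for $i\leq[[\frac{\dim L}{2}+\frac{1}{2c}]]$ and the analogous identification for the $i_{M*}i_{M}^{*}$-complex essentially for free, without any bidegree bookkeeping.

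Your approach---carrying the homotopy operators $v_{a},K_{a},\pi^{*}$ through the product and checking the $r$-integrals bidegree by bidegree---is correct for the first half of Theorem \ref{lll} and for the argument giving \eqref{atomi}, which is all you need here; your observation that $i-j\leq i<\frac{\dim L}{2}+\frac{1}{2c}$ uniformly in $j$ is precisely what makes Propositions \ref{lop} and \ref{qaz} transfer. One caveat: the claim that Propositions \ref{bau}, \ref{miao}, \ref{frau} transfer with ``the extra integration over $U$ merely passing as a multiplicative factor'' is not accurate, because for those the relevant inequality on the $L$-degree $i-1-j$ goes the wrong way as $j$ grows. Fortunately you never actually use them---the corollary only requires the low-degree isomorphism, not the high-degree vanishing---so this does not damage your argument, but you should drop those citations. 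In summary, your proof works and is more self-contained; the paper's proof is shorter because it leverages Proposition \ref{ioi} to avoid reopening Section 2.
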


\begin{proof}
The proof is completely analogous to the proof of proposition \ref{bea}.  For every $i=0,...,dimM+dimL+1$ the sheaf $i_{M*}i_{M}^{*}\mathcal{L}^{i}_{2,max}$ is isomorphic to the following sheaf; let $U\subset M\times C(L)$ an open subset then:

 $$i_{M*}i_{M}^{*}\mathcal{L}^{i}_{2,max}(U)\cong\{\omega\in L^{2}_{Loc}\Omega^{i}(reg(U),h+dr\otimes dr+r^{2c}g_{L}|_{reg(U)}): \forall\ p\in U-(U\cap (M\times \{v\})$$    $$\exists\ V\ open\ neighbourhood\ of\ p\ in\ U\ such\ that\ \omega|_{reg(V)}\in \mathcal{D}(reg(V), d_{max,i})\}.$$ From this it follows that for every $x\in M\times C(L)-(M\times \{v\})$ $$(\mathbb{H}^{i}(\mathcal{L}^{*}_{2,max},d_{max,*}))_{x}\cong (\mathbb{H}^{i}(i_{M*}i_{M}^{*}\mathcal{L}^{*}_{2,max},d_{max,*}))_{x}.$$ Now let $p=(m,v)\in M\times \{v\}$. By theorem \ref{lll}, remark \ref{sdds} and proposition \ref{ioi} we know that 
 \begin{equation}
(\mathbb{H}^{i}(\mathcal{L}^{*}_{2,max},d_{max,*}))_{p}\cong H^{i}(\mathcal{L}^{*}_{2,max}(U\times C(L)),d_{max,*})\cong H^{i}_{2,max}(reg(L),g_{L})
\label{klkl}
\end{equation}
 for $i\leq [[\frac{dimL}{2}+\frac{1}{2c}]]$ where $U$ is an open neighborhood of $m$ in $M$  diffeomorphic to an open ball in $\mathbb{R}^{s}$ where $s=dimM$. Moreover, like in  the proof  of the previous proposition, it is easy to show that 
\begin{equation}
(\mathbb{H}^{i}(i_{v*}i_{v}^{*}\mathcal{L}^{*}_{2,max},d_{max,*}))_{p}\cong H^{i}(i_{v*}i_{v}^{*}\mathcal{L}^{*}_{2,max}(U\times C(L)),d_{max,*})
\label{wsws}
\end{equation}
 where $U$ is as in \eqref{klkl}.
 Therefore in order to show that $$(\mathbb{H}^{i}(i_{M*}i_{M}^{*}\mathcal{L}^{*}_{2,max},d_{max,*}))_{p}\cong (\mathbb{H}^{i}(\mathcal{L}^{*}_{2,max},d_{max,*}))_{p}$$ for $i\leq [[\frac{dimL}{2}+\frac{1}{2c}]]$ it is sufficient to show that for the same $i$ $$H^{i}(i_{M*}i_{M}^{*}\mathcal{L}^{*}_{2,max}(U\times C(L)),d_{max,*})\cong H^{i}(\mathcal{L}^{*}_{2,max}(U\times C(L)),d_{max,*})$$ where $U$ is as in \eqref{klkl}.  But from the same observations of the  proof of prop. \ref{bea} and prop. \ref{ioi} follows immediately that $$H^{i}(i_{M*}i_{M}^{*}\mathcal{L}^{*}_{2,max}(U\times C(L)),d_{max,*})\cong H^{i}_{2,max}(reg(L),g_{L})\ \text{for}\ \text{each}\ i$$ and that $$H^{i}(\mathcal{L}^{*}_{2,max}(U\times C(L)),d_{max,*})\cong H^{i}_{2,max}(reg(L), g_{L})\ \text{for}\ i\leq [[\frac{dimL}{2}+\frac{1}{2c}]].$$ So  for $i\leq [[\frac{dimL}{2}+\frac{1}{2c}]]$ $$(\mathbb{H}^{i}(i_{M*}i_{M}^{*}\mathcal{L}^{*}_{2,max},d_{max,*}))_{p}\cong (\mathbb{H}^{i}(\mathcal{L}^{*}_{2,max},d_{max,*}))_{p}$$ and therefore we can conclude that for the same $i$ the complexes $(\mathcal{L}^{*}_{2,max},d_{max,*})$\\ and $(i_{M*}i_{M}^{*}\mathcal{L}^{*}_{2,max},d_{max,*})$ are quasi-isomorphic. Now using the same final considerations of the previous proof we get the conclusion.
\end{proof}

Finally we can give the proof of the theorem announced at the beginning of the section:

\begin{proof} (of  theorem \ref{ris}). 
Using  corollary \ref{trucco}  we know that there is a quasi rigid iterated edge metric on reg(X), $g'$, that is  quasi-isometric to $g$. So, without loss of generality, we can suppose that  $g$ is a quasi rigid iterated  edge metric  with weights.
We start by proving the isomorphism \ref{max}. The proof is given by induction on the depth of $X$. If $depth(X)=0$ there is nothing to show because, in this case, $X$ is a closed manifold and therefore the isomorphisms \ref{max} are the well know theorems of Hodge and de Rham. Suppose now that the theorem is true if $depth(X)\leq k-1$ and that $depth(X)=k$. We will show that the theorem is also true in this case.  We begin showing the first isomorphism, $H_{2,max}^{i}(reg(X),g)\cong I^{q_{g}}H^{i}(X,\mathcal{R}_{0})$; to do this we will use theorem \ref{tre}, corollary \ref{mlp} and remark \ref{ooo}. More precisely we will show that the complex $(\mathcal{L}^{i}_{2,max}, d_{max,i})$ satisfies the three axioms of theorem \ref{tre} respect to the perversity $p_{g}$, the stratification $\mathfrak{X}$ and the local system over $reg(X)$ given by $\mathcal{R}\otimes \mathcal{O}$ where $\mathcal{R}$ is $(X-X_{n-1})\times \mathbb{R}$ with $\mathbb{R}$ endowed of the discrete topology  and $\mathcal{O}$ is the orientation sheaf (see example \ref{ytr}). By proposition \ref{cfs} we know that $(\mathcal{L}^{i}_{2,max}, d_{max,i})$ is a complex of fine sheaves. The first two requirements of axiom 1 are clearly satisfied. The third requirement of the same axiom follows by proposition \ref{ioi} wich implies  that for each $x\in reg(X)$ $(\mathbb{H}^{i}(\mathcal{L}^{*}_{2,max},d_{max,*}))_{x}$, that is the stalk at the point $x$ of the $i-$th cohomology sheaf associated to the complex $(\mathcal{L}^{*}_{2,max}, d_{max,*})$, satisfies:
\begin{equation}
(\mathbb{H}^{i}(\mathcal{L}^{*}_{2,max},d_{max,*}))_{x}= \left\{
\begin{array}{ll}
\mathbb{R} & i=0\\
0 & i>0
\end{array}
\right.
\end{equation}
Consider now a stratum $Y\subset X$ and a point $x\in Y$. Let $l=dimY$. If $l=n-1$, that is if the codimension of $Y$ is $1$, then it is clear from proposition \ref{ioi} that for all $x\in Y$ the second axiom of theorem \ref{tre} is satisfied. So we can suppose that $l\leq n-2$.   
 By definition \ref{edge} we know that there exists an open subset  $V\subset Y$ such that $\pi_{Y}^{-1}(V)\cong V\times C(L_{Y})$
and such that $$\phi:(\pi_{Y}^{-1}(V)\cap reg(X), g|_{\pi_{Y}^{-1}(V)\cap reg(X)})\rightarrow (V\times reg(C(L_{Y})), dr^{2}+h_{V}+r^{2c_{Y}}g_{L_{Y}})$$ is a quasi-isometry. Therefore by the invariance of $L^{2}-$cohomology under quasi-isometry we can use $(V\times reg(C(L_{Y})), dr^{2}+h_{V}+r^{2c_{Y}}g_{L_{Y}})$ to calculate the $L^{2}-$cohomology of $\pi_{Y}^{-1}(V)\cap reg(X)$. Choosing $V$ diffeomorphic to $(0,\epsilon)^{l}$ with $\epsilon$ sufficiently small we have that 
\begin{equation}
\label{hhuu}
(V\times reg(C(L_{Y})), dr^{2}+h_{V}+r^{2c_{Y}}g_{L_{Y}})
\end{equation}
 is quasi-isometric to $$((0,\epsilon)^{l}\times reg(C(L_{Y})),ds_{1}^{2}+...+ds_{l}^{2}+dr^{2}+r^{2c_{Y}}g_{Y}).$$ Therefore  from proposition \ref{ioi} and the invariance of $L^{2}-$cohomology under quasi-isometry it follows that 
\begin{equation}
\label{kjl}
H^{i}_{2,max}(V\times reg(C(L_{Y})), dr^{2}+h_{V}+r^{2c_{Y}}g_{L_{Y}})\cong H_{2,max}^{i}(reg(C(L_{Y})),dr^{2}+r^{2c_{Y}}g_{L_{Y}}).
\end{equation}
In this way  we have obtained that 
\begin{equation}
H^{i}_{2,max}(reg(\pi_{Y}^{-1}(V)),g|_{reg(\pi_{Y}^{-1}(V))})\cong H_{2,max}^{i}(reg(C(L_{Y})),dr^{2}+r^{2c_{Y}}g_{L_{Y}}).
\label{jhg}
\end{equation}


As we have already observed in the proof of corollary \ref{pietro} we know that $$(\mathbb{H}^{i}(\mathcal{L}^{*}_{2,max},d_{max,*}))_{x}\cong H^{i}_{2,max}(reg(\pi_{Y}^{-1}(V)),g|_{reg(\pi_{Y}^{-1}(V))})$$ where $V$ is as in \ref{hhuu}.
Therefore from this and \eqref{jhg} we get that 
\begin{equation}
(\mathbb{H}^{i}(\mathcal{L}^{*}_{2,max},d_{max,*}))_{x}\cong H^{i}_{2,max}(reg(C(L_{Y})),dr\otimes dr+r^{2c_{Y}}g_{L_{Y}})
\end{equation}
Now, using the inductive hypothesis we know that this theorem is true for $(L_{Y},g_{L_{Y}})$ that is $H^{i}_{2,max}(reg(L_{Y}), g_{L_{Y}})\cong I^{q_{g_{L_{Y}}}}H^{i}(L_{Y}, \mathcal{R}_{0})$ where $q_{g_{L_{Y}}}=t-p_{g_{L_{Y}}}$ and $p_{g_{L_{Y}}}$ is the general perversity associated to $g_{L_{Y}}$ on $L_{Y}$. This implies that $dimH^{i}_{2,max}(reg(L_{Y}),g_{L_{Y}})<\infty$ for each $i=0,...,dimL_{Y}$. From this it follows that at least one of the three hypotheses of  corollary \ref{edo} is always satisfied.  So we can use the same  corollary  to get:
\begin{equation}
H^{i}_{2,max}(reg(C(L_{Y})),g_{c}) = \left\{  
\begin{array}{ll}
H_{2,max}^{i}(reg(L_{Y}),g_{L_{Y}}) & i<\frac{dimL_{Y}}{2}+\frac{1}{2c_{Y}}\\
0& i\geq\frac{dimL_{Y}}{2}+\frac{1}{2c_{Y}}
\end{array}
\right.
\label{jlk}
\end{equation}

In this way we can conclude that for each $x\in Y$ $$(\mathbb{H}^{i}(\mathcal{L}_{2,max}^{*},d_{max,*}))_{x}=0\ for\ i>p_{g}(Y)$$ and therefore the complex $(\mathcal{L}^{*}_{2,max}, d_{max,*})$ satisfies the second axiom of theorem \ref{tre}.\\To conclude the first part of the proof we have to show that given any stratum $Z\subset X_{n-k}-X_{n-k-1}$ and any point $x\in Z$  the attaching map, that is the morphism given by the composition of $$\mathcal{L}^{*}_{2,max}|_{U_{k+1}}\rightarrow i_{k*}\mathcal{L}^{*}_{2,max}|_{U_{k}}\rightarrow Ri_{k*}\mathcal{L}_{2,max}^{*}|_{U_{k}}$$ where the first morphism is induced by the inclusion $i_{k}:U_{k}\rightarrow U_{k+1}$, is a quasi-isomorphism at $x$ up to $p_{g}(Z).$ By the fact that  $(\mathcal{L}^{*}_{2,max}, d_{max,*})$ is a complex of fine sheaves it follows that $i_{k*}\mathcal{L}^{*}_{2,max}|_{U_{k}}\rightarrow Ri_{k*}\mathcal{L}_{2,max}^{*}|_{U_{k}}$ is a quasi-isomorphism (for example see \cite{B} pag. 32 or \cite{BL} pag. 222). Therefore, to conclude, we have only to show that the morphism $\mathcal{L}^{*}_{2,max}|_{U_{k+1}}\rightarrow i_{k*}\mathcal{L}^{*}_{2,max}|_{U_{k}}$ is a quasi-isomorphism at $x$ up to $p_{g}(Z)$, that is, for each $x\in Z$ it induces an isomorphism 
\begin{equation}
(\mathbb{H}^{i}(\mathcal{L}^{*}_{2,max}|_{U_{k+1}}, d_{max,*}))_{x}\cong (\mathbb{H}^{i}(i_{k*}\mathcal{L}^{*}_{2,max}|_{U_{k}}, d_{max,*}))_{x}\ for\ i\leq p_{g}(Z).
\label{fritto}
\end{equation}

Now, like in the previous case to prove the validity of the second axiom, to show that for each $x\in Z$ $$(\mathbb{H}^{i}(\mathcal{L}^{*}_{2,max}|_{U_{k+1}}, d_{max,*}))_{x}\cong (\mathbb{H}^{i}(i_{k*}\mathcal{L}^{*}_{2,max}|_{U_{k}}, d_{max,*}))_{x}\ for\ i\leq p_{g}(Z)$$ it is sufficient to show that there exists an open neighbourhood  $U$ of $x\in Z$ such that $\pi_{Z}^{-1}(U)\cong U\times C(L_{Z})$ and such that   $$H^{i}(\mathcal{L}^{*}_{2,max}|_{U_{k+1}}(\pi_{Z}^{-1}(U)), d_{max,*})\cong H^{i}(i_{k*}\mathcal{L}^{*}_{2,max}|_{U_{k}}(\pi_{Z}^{-1}(U)), d_{max,*})\ for\ i\leq p_{g}(Z)$$ where the isomorphism is induced by  the inclusion $i_{k}:U_{k}\rightarrow U_{k+1}$.
Finally   this last statement follows from corollary \ref{pietro}.  
 So given a stratum $Z\subset X_{n-k}-X_{n-k-1}$ and a point $x\in Z$ 
  we can conclude that for  $i\leq p_{g}(Z)$ the natural maps induced by the inclusion of $U_{k}$ in $U_{k+1}$ induces a quasi isomorphism between $$\mathcal{L}^{*}_{2,max}|_{U_{k+1}}\rightarrow i_{k*}\mathcal{L}^{*}_{2,max}|_{U_{k}}.$$ So also the third axiom of theorem \ref{tre} is satisfied.\\ Therefore for all $i=0,...,n$ $H^{i}(\mathcal{L}_{2,max}(reg(X)),d_{max,*})\cong I^{q_{g}}H^{i}(X,\mathcal{R}_{0})$. Finally by the compactness of $X$, see the third point of proposition \ref{ddd}, we get, for each $i=0,...,n$, the desired isomorphisms: $$H_{2,max}^{i}(reg(X),g)\cong I^{q_{g}}H^{i}(X,\mathcal{R}_{0}).$$

From the isomorphism $H_{2,max}^{i}(reg(X),g)\cong I^{q_{g}}H^{i}(X, \mathcal{R}_{0})$ it follows that $H^{i}_{2,max}(reg(X),g)$ is finite dimensional and then the isomorphism $\mathcal{H}_{abs}^{i}(reg(X))\cong H_{2,max}^{i}(reg(X),g)$ is an immediate consequence of proposition \ref{qwe} and formula \ref{kd}. The first part of the proof is completed.\vspace{1 cm}
To prove the second part of the theorem it is sufficient observe that the finite dimension of $H^{i}_{2,max}(reg(X),g)$ for all $i=0,...,n$  implies that the complex $(L^{2}\Omega^{*}(reg(X),g),d_{max,*})$ is a Fredholm complex. Now, using the isomorphism induced by the Hodge star operator $*$ between the Hilbert complexes $(L^{2}\Omega^{*}(reg(X),g), d_{min,*})$ and the adjoint complex of $(L^{2}\Omega^{*}(reg(X),g), d_{max,*})$ and proposition \ref{fred}, it follows that $$H^{i}_{2,max}(reg(X),g)\cong H^{n-i}_{2,min}(reg(X), g).$$ Finally, using  Poincaré duality for intersection homology, that is theorem \ref{mzb}, we get the isomorphism $$H_{2,min}^{i}(reg(X),g)\cong I^{p_{g}}H^{i}(X, \mathcal{R}_{0}).$$ Now, like in the previous case, we know that $H_{2,min}^{i}(reg(X),g)$ is finite dimensional and then the isomorphism $\mathcal{H}_{rel}^{i}(reg(X))\cong H_{2,min}^{i}(reg(X),g)$ is an immediate consequences of proposition \ref{qwe} and formula \ref{kd}. 
\end{proof}

\begin{proof} (of theorem \ref{new}). Suppose that $p$ is a general perversity in the sense of Friedman on $X$ such that $p\geq \overline{m}$ and $p(Y)=0$ for each one codimensional stratum $Y$ of $X$. We recall that $\overline{m}$ is defined in the following way: if $Y\subset X$ is a stratum of $X$ and if $L_{Y}$ is the link relative to $Y$ with $l_{Y}=dimL_{Y}$ then 
$$\overline{m}(Y)= \left\{
\begin{array}{ll}
\frac{l_{Y}}{2} & l_{Y}\ even\\
\frac{l_{Y}-1}{2} & l_{Y}\ odd
\end{array}
\right.
$$
Therefore  it follows that for each stratum $Y$ there is a non negative integer $n_{Y}$ such that 
$$p(Y)= \left\{
\begin{array}{lll}
0 & l_{Y}=0\\
\frac{l_{Y}}{2}+n_{Y} & l_{Y}\ even,\ l_{Y}\neq 0\\
\frac{l_{Y}-1}{2}+n_{Y} & l_{Y}\ odd
\end{array}
\right.
$$
Now we can choose some non negative real numbers $\{c_{Y}\}_{Y\in \mathfrak{X}}$ such that $n_{Y}=[[\frac{1}{2c_{Y}}]]$ if $l_{Y}$ is even and $n_{Y}=[[\frac{1}{2}+\frac{1}{2c_{Y}}]]$ if $l_{Y}$ is odd. By proposition \ref{top} we know that there is a quasi rigid iterated edge metric $g$ on $reg(X)$ having the numbers $\{c_{Y}\}_{Y\in \mathfrak{X}}$ like weights. In this way $p=p_{g}$, the general perversity associated to $g$, and therefore by theorem \ref{ris} we can get the isomorphism \eqref{minn} .\\Conversely if $p$ satisfies $p\leq \underline{m}$ and $p(Y)=-1$ for each one codimensional stratum $Y$ of $X$, then $q:=t-p$, where $t$ is top perversity, satisfies $q\geq \overline{m}$ and $q(Y)=0$ for each one codimensional stratum $Y$ of $X$. Therefore by the previous point there exists a quasi edge metric with weights $h$ on $reg(X)$ such that $p_{h}=q$. Finally using again theorem \ref{ris} we can get the isomorphism \eqref{maxx}.
\end{proof}

In the same hypothesis of the  theorem \ref{ris} we have the following  corollaries:

\begin{cor} For each $i=0,...,n$ on $L^{2}\Omega^{i}(reg(X),g)$ we have the following decompositions:
\begin{equation}
L^{2}\Omega^{i}(reg(X),g)=\mathcal{H}^{i}_{abs}\oplus ran(d_{max,i-1})\oplus ran(\delta_{min,i})
\label{ass}
\end{equation}
\begin{equation}
L^{2}\Omega^{i}(reg(X),g)=\mathcal{H}^{i}_{rel}\oplus ran(d_{min,i-1})\oplus ran(\delta_{max,i})
\label{asso}
\end{equation}and
\begin{equation}
L^{2}\Omega^{i}(reg(X),g)=\mathcal{H}^{i}_{max}\oplus ran(d_{min,i-1})\oplus ran(\delta_{min,i})
\label{tyt}
\end{equation}
\label{zmmz}
\end{cor}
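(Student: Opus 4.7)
The plan is to deduce the corollary directly from Theorem \ref{ris} together with the general Hilbert complex machinery recalled in the first subsection; no new analytic input is needed. The three decompositions \eqref{ass}, \eqref{asso}, \eqref{tyt} are simply the weak Kodaira decompositions \eqref{kd} and \eqref{oiko} with the closure bars removed, so everything reduces to showing that the four ranges $ran(d_{max,i-1})$, $ran(d_{min,i-1})$, $ran(\delta_{max,i})$, $ran(\delta_{min,i})$ are closed subspaces of $L^{2}\Omega^{i}(reg(X),g)$.

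First, by Theorem \ref{ris} we know that for every $i$ both $H^{i}_{2,max}(reg(X),g)$ and $H^{i}_{2,min}(reg(X),g)$ are finite dimensional (they are isomorphic to the intersection cohomology groups $I^{q_{g}}H^{i}(X,\mathcal{R}_{0})$ and $I^{p_{g}}H^{i}(X,\mathcal{R}_{0})$, which are finite dimensional since $X$ is compact). Therefore the Hilbert complexes $(L^{2}\Omega^{*}(reg(X),g),d_{max,*})$ and $(L^{2}\Omega^{*}(reg(X),g),d_{min,*})$ are Fredholm complexes in the sense of Proposition \ref{qwe}, and that proposition immediately yields that $ran(d_{max,i-1})$ and $ran(d_{min,i-1})$ are closed for every $i$.

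Next I pass to the adjoint differentials. Since $(d_{max,i-1})^{*}=\delta_{min,i-1}$ and $(d_{min,i-1})^{*}=\delta_{max,i-1}$, the closed range theorem for closed densely defined operators on Hilbert spaces tells us that $ran(\delta_{min,i-1})$ and $ran(\delta_{max,i-1})$ are closed as well. Equivalently, one may invoke Proposition \ref{fred}: the dual Hilbert complexes are Fredholm, and their differentials have closed range by another application of Proposition \ref{qwe}.

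With the closedness of all four ranges in hand, the decomposition \eqref{kd} gives \eqref{ass} and \eqref{asso} at once (the closure bars on both $ran(d_{max/min,i-1})$ and $ran(\delta_{min/max,i})$ can be dropped), while \eqref{oiko} gives \eqref{tyt}. Since every step is an immediate citation of previously established facts, there is no substantive obstacle; the only thing worth being careful about is to note that the closedness of $ran(\delta_{min,i})$ needed in \eqref{ass} and \eqref{tyt} comes from the Fredholm property of the $d_{max,*}$ complex via adjunction, while the closedness of $ran(\delta_{max,i})$ needed in \eqref{asso} comes from the Fredholm property of the $d_{min,*}$ complex.
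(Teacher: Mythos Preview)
Your argument is correct and follows essentially the same route as the paper: Theorem \ref{ris} gives finite-dimensional cohomology for both the maximal and minimal complexes, Proposition \ref{qwe} (together with Proposition \ref{fred} or, equivalently, the closed range theorem for the adjoints) then yields closedness of all four ranges, and the weak Kodaira decompositions \eqref{kd} and \eqref{oiko} finish the job.
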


\begin{proof} By theorem \ref{ris} we know that $H^{i}_{2,max}(reg(X),g)$ and $H^{i}_{2,min}(reg(X),g)$ are finite dimensional. Therefore by prop. \ref{qwe}, the fact that $(L^{2}\Omega^{*}(M,g),\delta_{min,*})$ is the dual complex of $(L^{2}\Omega^{*}(M,g),d_{max,*})$, $(L^{2}\Omega^{*}(M,g),\delta_{max,*})$ is the dual complex of $(L^{2}\Omega^{*}(M,g),d_{min,*})$ and proposition \ref{fred} it follows that, for each $i$, $ran(d_{max,i}),ran(d_{min,i}),ran(\delta_{max,i})$ and $ran(\delta_{min,i})$ are closed. Now applying \eqref{kd} we can get \eqref{ass} and \eqref{asso} and applying \eqref{oiko} we can get \eqref{tyt}. 
\end{proof}

\begin{cor} $$d_{max}+\delta_{min},\ d_{min}+\delta_{max}: L^{2}\Omega^{*}(reg(X),g)\rightarrow L^{2}\Omega^{*}(reg(X),g)$$  and for each $i$ $$\Delta_{abs,i},\  \Delta_{rel,i}:L^{2}\Omega^{i}(reg(X),g)\rightarrow L^{2}\Omega^{i}(reg(X),g)$$ are Fredholm operators. 
Moreover also  $$d_{max}+\delta_{min},\ d_{min}+\delta_{max}: L^{2}\Omega^{even}(reg(X),g)\rightarrow L^{2}\Omega^{odd}(reg(X),g)$$ are Fredholm operators and their indexes satisfy: $$ind(d_{max}+\delta_{min})=\sum_{i=0}^n(I^{q_{g}}b_{2i}(X)-I^{p_{g}}b_{2i+1}(X))$$  $$ind(d_{min}+\delta_{max})=\sum_{i=0}^n(I^{p_{g}}b_{2i}(X)-I^{q_{g}}b_{2i+1}(X))$$ where $I^{p_{g}}b_{2i}(X)=dim(I^{p_{g}}H^i(X,\mathcal{R}))$ and analogously $I^{q_{g}}b_{2i}(X)=dim(I^{q_{g}}H^i(X,\mathcal{R}))$.\\
Finally $$\Delta_{max,i}:L^{2}\Omega^{i}(reg(X),g)\rightarrow L^{2}\Omega^{i}(reg(X),g)$$ has closed range and its orthogonal complement is finite dimensional while  $$\Delta_{min,i}:L^{2}\Omega^{i}(reg(X),g)\rightarrow L^{2}\Omega^{i}(reg(X),g)$$ has closed range and finite dimensional nullspace; in other words $\Delta_{max,i}$ is essentially surjective and $\Delta_{min,i}$ is essentially injective.
\end{cor}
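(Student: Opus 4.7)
\emph{Plan.} The engine is Theorem \ref{ris}, which forces every one of the cohomology groups $H^i_{2,max}(reg(X),g)$ and $H^i_{2,min}(reg(X),g)$ to be finite-dimensional. By Proposition \ref{qwe} applied to the Hilbert complexes $(L^2\Omega^*(reg(X),g),d_{max/min,*})$ and by Proposition \ref{fred} applied to their adjoint complexes, each of the closed operators $d_{max,i}$, $d_{min,i}$, $\delta_{max,i}$, $\delta_{min,i}$ has closed range. This single fact, together with the three Kodaira decompositions of Corollary \ref{zmmz} and the identities of Proposition \ref{olpo}, drives every assertion in the corollary.

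First I would dispatch $\Delta_{abs,i}$ and $\Delta_{rel,i}$: each is self-adjoint, with kernel $\mathcal{H}^i_{abs}$ respectively $\mathcal{H}^i_{rel}$, both finite-dimensional by Theorem \ref{ris} and formula \eqref{kd}. Closedness of the range follows from the identity $\overline{ran(\Delta_{abs,i})}=\overline{ran(d_{max,i-1})}\oplus\overline{ran(\delta_{min,i})}$ combined with the closedness of each orthogonal summand; the orthogonal complement of this closed range is the finite-dimensional kernel, so $\Delta_{abs,i}$ is Fredholm. The same argument works verbatim for $\Delta_{rel,i}$.

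Next, for $D:=d_{max}+\delta_{min}$ and $D':=d_{min}+\delta_{max}$ on the total space $L^2\Omega^*(reg(X),g)$, both are self-adjoint with finite-dimensional kernels $\bigoplus_i\mathcal{H}^i_{abs}$ and $\bigoplus_i\mathcal{H}^i_{rel}$. The orthogonal decomposition \eqref{ass} of Corollary \ref{zmmz} splits $L^2\Omega^i$ into three closed pieces on which $D$ acts diagonally; on the two non-kernel pieces $D$ is a bijection onto itself by the open mapping theorem (using the closed range of the differentials), so $ran(D)$ is closed, of finite codimension equal to $\dim\ker D$. For the $\mathbb{Z}/2$-graded variants $D_\pm:L^2\Omega^{even/odd}\to L^2\Omega^{odd/even}$ one has $(D_+)^*=D_-$; the kernel and cokernel of $D_+$ are $\bigoplus_i\mathcal{H}^{2i}_{abs}$ and $\bigoplus_i\mathcal{H}^{2i+1}_{abs}$, giving the Fredholm property and an index $\dim\mathcal{H}^{even}_{abs}-\dim\mathcal{H}^{odd}_{abs}$. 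The stated formula then follows from the identifications of Theorem \ref{ris} together with the Hodge star isomorphism $\ast:\mathcal{H}^i_{abs}\cong\mathcal{H}^{n-i}_{rel}$ used to rewrite odd-degree contributions via Poincar\'e duality (Theorem \ref{mzb}).

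The main obstacle is the last pair $\Delta_{max,i}$, $\Delta_{min,i}$, neither of which is self-adjoint. By Proposition \ref{olpo} one has $(\Delta_{max,i})^*=\Delta_{min,i}$, so the closed-range theorem reduces closed-range for one to closed-range for the other; parts 4 and 5 of that proposition identify the closures of the ranges with the subspaces $ran(d_{min,i-1})\oplus ran(\delta_{min,i})$ and $\overline{ran(d_{max,i-1})+ran(\delta_{max,i})}$, which are closed by the first paragraph. To upgrade ``closure is closed'' to ``range equals closure'', I would verify that $\Delta_{min,i}$ restricted to $\mathcal{D}(\Delta_{min,i})\cap(\mathcal{H}^i_{min})^\perp$ is bounded below, which reduces via the decomposition to the standard closed-range estimate for $d_{min,i-1}$ and $\delta_{min,i}$. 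Finite-dimensionality of both the nullspace $\ker\Delta_{min,i}=\mathcal{H}^i_{min}$ and of the orthogonal complement of $ran(\Delta_{max,i})$, which by part 1 of Proposition \ref{olpo} also equals $\mathcal{H}^i_{min}$, is then immediate from the inclusion $\mathcal{H}^i_{min}\subset\mathcal{H}^i_{abs}$ and the finite-dimensionality of the latter. The delicate step is precisely this upgrade: Proposition \ref{olpo} only identifies closures of ranges, and ruling out a genuine gap between $ran$ and $\overline{ran}$ for the non-self-adjoint Laplacians requires the boundedness-below argument just sketched.
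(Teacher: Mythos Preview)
Your treatment of the Fredholm properties of $d_{max}+\delta_{min}$, $d_{min}+\delta_{max}$, $\Delta_{abs,i}$, $\Delta_{rel,i}$ and of the index formulas is correct and considerably more explicit than the paper, which simply states that ``the first three assertions follow immediately from theorem \ref{ris}''. Your use of the Kodaira decompositions of Corollary \ref{zmmz} and the Hodge star/Poincar\'e duality to read off kernels, cokernels and indices is exactly the right unpacking.

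For the last pair $\Delta_{max,i}$, $\Delta_{min,i}$, however, the paper takes a much shorter route and avoids precisely the delicate step you single out. Rather than attempting a bounded-below estimate for $\Delta_{min,i}$, the paper observes the elementary inclusion $ran(\Delta_{abs,i})\subset ran(\Delta_{max,i})$ (since $\Delta_{abs,i}$ is a restriction of $\Delta_{max,i}$). Because $\Delta_{abs,i}$ is already known to be Fredholm, $L^2\Omega^i/ran(\Delta_{abs,i})$ is finite-dimensional, hence so is $L^2\Omega^i/ran(\Delta_{max,i})$; a bounded operator with finite-codimensional range has closed range, and this settles $\Delta_{max,i}$. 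Closed range for $\Delta_{min,i}$ then follows from $(\Delta_{max,i})^*=\Delta_{min,i}$ and the closed-range theorem, while $\ker\Delta_{min,i}=\mathcal{H}^i_{min}\subset\mathcal{H}^i_{abs}$ is finite-dimensional.

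Your proposed reduction ``to the standard closed-range estimate for $d_{min,i-1}$ and $\delta_{min,i}$'' is not clearly justified: $\Delta_{min,i}$ is the \emph{graph closure} of the formal Laplacian on $C_c^\infty$, and elements of $\mathcal{D}(\Delta_{min,i})$ need not lie in $\mathcal{D}(d_{min,i})\cap\mathcal{D}(\delta_{min,i-1})$, nor does Proposition \ref{olpo} provide such a decomposition of the operator itself (only of the closure of its range). So the bounded-below step, while plausible, would require additional argument. The paper's cokernel trick sidesteps this entirely.
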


\begin{proof} The first three assertions follow immediately from theorem \ref{ris}. For the last two we know that $ran(\Delta_{abs,i})\subset ran(\Delta_{max}).$ This implies that there exists a surjective map from $$\frac{L^{2}\Omega^{i}(M,g)}{ran(\Delta_{abs,i})}\longrightarrow \frac{L^{2}\Omega^{i}(M,g)}{ran(\Delta_{max,i})}.$$ But we know that $\Delta_{abs}$ is Fredholm; this implies that the term on the left in the above equality is finite dimensional and therefore also the term on the right is finite dimensional. So $\Delta_{max,i}$ from its natural domain endowed with the graph norm to $L^{2}\Omega^{i}(M,g)$ is a continuous operator with finite dimensional cokernel and this implies the  statement of the corollary about $\Delta_{max,i}$. For $\Delta_{min,i}$ we know, see prop. \ref{olpo}, that $Ker(\Delta_{min,i})=Ker(d_{min,i})\cap Ker(\delta_{min,i-1})$ and therefore by theorem \ref{ris} it follows that $Ker(\Delta_{min,i})$ is finite dimensional. Using again proposition \ref{olpo} we know that $(\Delta_{max,i})^*=\Delta_{min,i}$ and therefore by the fact that $\Delta_{max,i}$ has closed range it follows that also $\Delta_{min,i}$ has closed range.
\end{proof}

Finally the remaining corollaries follow immediately from theorem \ref{ris} and from the definition of  intersection cohomology with general perversity.

\begin{cor} Consider the following complex $(C_{0}^{\infty}\Omega^{i}(reg(X)),d_{i})$. Then a necessary condition to have the minimal exstension equal to the maximal one is that the perversities $p_{g}$ and $q_{g}$ gives isomorphic intersection cohomology groups. 
\end{cor}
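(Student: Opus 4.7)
The plan is to observe that this is an essentially immediate consequence of Theorem \ref{ris}, once one unwinds the hypothesis correctly. The assumption is that on $L^{2}\Omega^{i}(reg(X),g)$ the closed operators $d_{min,i}$ and $d_{max,i}$ coincide for every $i$; since they are both closed extensions of the exterior derivative on compactly supported smooth forms, equality here means equality both of domains and of action. I would state this explicitly as a first step, since the crux of the argument is really just the identification of the two Hilbert complexes.

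Once the Hilbert complexes $(L^{2}\Omega^{*}(reg(X),g), d_{max,*})$ and $(L^{2}\Omega^{*}(reg(X),g), d_{min,*})$ are identified as literally the same complex, the cohomology groups coincide tautologically:
\begin{equation*}
H^{i}_{2,max}(reg(X),g)=H^{i}_{2,min}(reg(X),g)\quad\text{for all }i=0,\dots,n.
\end{equation*}
At this point I would invoke the two isomorphisms from Theorem \ref{ris}, namely $H^{i}_{2,max}(reg(X),g)\cong I^{q_{g}}H^{i}(X,\mathcal{R}_{0})$ and $H^{i}_{2,min}(reg(X),g)\cong I^{p_{g}}H^{i}(X,\mathcal{R}_{0})$. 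Chaining these isomorphisms with the equality above yields $I^{p_{g}}H^{i}(X,\mathcal{R}_{0})\cong I^{q_{g}}H^{i}(X,\mathcal{R}_{0})$ for every $i$, which is the conclusion sought.

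There is essentially no obstacle here — the corollary is just a logical contrapositive-style repackaging of Theorem \ref{ris}. The only thing requiring care is to make sure the hypothesis ``$d_{min,i}=d_{max,i}$'' is interpreted as equality of the closed operators (equivalently, of their domains), not merely agreement on some common subspace; this is the standard meaning since both operators act as the distributional $d$ wherever they are defined.
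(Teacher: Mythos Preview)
Your proof is correct and is essentially the same as the paper's: the paper states that this corollary follows immediately from Theorem \ref{ris}, and your argument is precisely the unwinding of that immediate consequence.
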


\begin{cor} If every weight is greater or equal than $1$, that is for every stratum $Y$ $c_{Y}\geq 1$, then, for all $i$, we obtain the following isomorphisms:
\begin{equation}
\mathcal{H}_{abs}^{i}(reg(X),g)\cong H_{2,max}^{i}(reg(X),g)\cong I^{\underline{m}}H^{i}(X,\mathcal{R}_{0})
\label{mal}
\end{equation}
\begin{equation}
\mathcal{H}_{rel}^{i}(reg(X),g)\cong H_{2,min}^{i}(reg(X),g)\cong I^{\overline{m}}H^{i}(X, \mathcal{R}_{0})
\end{equation} where $\underline{m}$ is the lower middle perversity and $\overline{m}$ is the upper middle perversity.
\label{mil}
\end{cor}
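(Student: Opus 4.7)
The plan is to derive the corollary as a direct specialization of Theorem \ref{ris}: under the hypothesis $c_Y \geq 1$ for every stratum $Y$, I would show that the perversity $p_g$ associated to $g$ reduces to the upper middle perversity $\overline{m}$ and, consequently, that its complement $q_g = t - p_g$ reduces to the lower middle perversity $\underline{m}$. Once this identification is established, the two chains of isomorphisms in the corollary follow by literally substituting $p_g = \overline{m}$ and $q_g = \underline{m}$ into the conclusions of Theorem \ref{ris}.

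The main computation is entirely elementary. For any stratum $Y$ with $c_Y \geq 1$, we have $0 < \tfrac{1}{2c_Y} \leq \tfrac{1}{2}$, whence $[[\tfrac{1}{2c_Y}]] = 0$, the greatest integer strictly less than $\tfrac{1}{2c_Y}$. Similarly, $\tfrac{1}{2} < \tfrac{1}{2} + \tfrac{1}{2c_Y} \leq 1$ gives $[[\tfrac{1}{2} + \tfrac{1}{2c_Y}]] = 0$. Inserting these values into Definition \ref{pim} yields $p_g(Y) = 0$ if $l_Y = 0$, $p_g(Y) = l_Y/2$ if $l_Y$ is even and nonzero, and $p_g(Y) = (l_Y - 1)/2$ if $l_Y$ is odd. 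This coincides stratum-by-stratum with $\overline{m}$, where on a codimension-one stratum we use the convention $\overline{m}(Y) = 0$.

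For the complementary perversity, recall $t(Y) = \mathrm{cod}(Y) - 2 = l_Y - 1$. A direct case-by-case subtraction gives $q_g(Y) = (l_Y - 1) - l_Y/2 = (l_Y - 2)/2$ when $l_Y$ is even and nonzero, $q_g(Y) = (l_Y - 1)/2$ when $l_Y$ is odd, and $q_g(Y) = -1$ when $l_Y = 0$, which matches $\underline{m}$ in Friedman's general setting.

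With the two identifications $p_g = \overline{m}$ and $q_g = \underline{m}$ in hand, Theorem \ref{ris} immediately yields $\mathcal{H}^{i}_{abs}(reg(X), g) \cong H^{i}_{2, max}(reg(X), g) \cong I^{\underline{m}} H^{i}(X, \mathcal{R}_{0})$ and $\mathcal{H}^{i}_{rel}(reg(X), g) \cong H^{i}_{2, min}(reg(X), g) \cong I^{\overline{m}} H^{i}(X, \mathcal{R}_{0})$, which are the two isomorphisms claimed. There is no genuine obstacle here; the entire content is the observation that the ceiling-type quantities $[[\cdot]]$ in Definition \ref{pim} vanish precisely in the regime $c_Y \geq 1$, so that the general perversity $p_g$ collapses to the classical upper middle perversity.
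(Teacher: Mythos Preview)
Your proof is correct and is exactly the argument the paper has in mind: the paper states that this corollary follows immediately from Theorem \ref{ris} and the definition of the perversity, and your computation of $p_g=\overline{m}$ (hence $q_g=\underline{m}$) via $[[1/(2c_Y)]]=[[1/2+1/(2c_Y)]]=0$ for $c_Y\geq 1$ is precisely that immediate verification.
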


\begin{cor} Suppose  that  the general perversity associated to the quasi edge metric with weights $g$ satisfies  $p_{g}(Z)\geq cod(Z)-1$ for each singular stratum $Z$. Then, for all $i$, we have the following isomorphisms:
\begin{equation}
\mathcal{H}_{abs}^{i}(reg(X),g)\cong H_{2,max}^{i}(reg(X),g)\cong H^{i}(X-X_{n-1},\mathbb{R})
\end{equation}
\begin{equation}
\mathcal{H}_{rel}^{i}(reg(X),g)\cong H_{2,min}^{i}(reg(X),g)\cong H^{i}(X, \mathcal{R}_{0}).
\end{equation} 
\end{cor}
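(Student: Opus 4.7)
The plan is to apply theorem \ref{ris} directly: it supplies the first isomorphism in each chain, reducing the corollary to two purely topological identifications
$$I^{q_g}H^i(X, \mathcal{R}_0) \cong H^i(X - X_{n-1}, \mathbb{R}) \quad \text{and} \quad I^{p_g}H^i(X, \mathcal{R}_0) \cong H^i(X, \mathcal{R}_0).$$
The hypothesis $p_g(Z) \geq cod(Z) - 1$ rewrites as $p_g \geq t + 1$, so that $q_g = t - p_g \leq -1$: the perversity $p_g$ sits above the top perversity and $q_g$ strictly below the zero perversity.

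For the relative (min) identification, I would argue at the chain level. Since $p_g(Z) \geq cod(Z) - 1$, the $p_g$-allowability bound $\sigma^{-1}(Z) \subset \{(i - cod(Z) + p_g(Z))\text{-skeleton of }\Delta_i\}$ contains the full $(i-1)$-skeleton $\partial \Delta_i$, and similarly for all iterated faces. Combined with the convention that simplices of $\mathcal{R}_0$ supported entirely in $X_{n-1}$ carry the zero coefficient, one verifies that $I^{p_g}S_*(X, \mathcal{R}_0)$ coincides with the full stratified singular chain complex $S_*(X, \mathcal{R}_0)$, whence $I^{p_g}H^i(X, \mathcal{R}_0) \cong H^i(X, \mathcal{R}_0)$.

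For the absolute (max) identification I would proceed sheaf-theoretically, mimicking the strategy of the proof of theorem \ref{ris}. Let $j : X - X_{n-1} \hookrightarrow X$ be the open inclusion and take $\mathcal{S}^* = Rj_*(\mathbb{R} \otimes \mathcal{O})$. By remark \ref{ooo} it suffices to verify the axioms $(AX1)_{p_g, \mathfrak{X}, \mathbb{R} \otimes \mathcal{O}}$. Axiom 1 is immediate from the construction. For axiom 2, at $x \in Z$ the stalk cohomology computes $H^i(L_Z, \mathbb{R})$, which vanishes for $i > \dim L_Z = cod(Z) - 1 \leq p_g(Z)$, so it certainly vanishes for $i > p_g(Z)$. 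Axiom 3 is automatic because $\mathcal{S}^*$ is itself a pushforward from the open regular set, so a standard base-change argument makes the attaching map $\mathcal{S}^*|_{U_{k+1}} \to Ri_{k*}i_k^*\mathcal{S}^*|_{U_{k+1}}$ an actual isomorphism. Theorem \ref{tre} (via remark \ref{ooo}) then gives $\mathbb{H}^i(X, \mathcal{S}^*) \cong I^{q_g}H^i(X, \mathcal{R}_0)$, while compactness of $X$ yields $\mathbb{H}^i(X, Rj_*\mathbb{R}) \cong H^i(X - X_{n-1}, \mathbb{R})$, completing the identification.

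The main obstacle is the verification of axiom 2, which reduces cleanly to the topological dimension bound $\dim L_Z \leq p_g(Z)$ — precisely what the hypothesis of the corollary guarantees. Everything else is either tautological, standard sheaf theory, or an immediate consequence of the stratified coefficient convention combined with the allowability bounds for a supra-top perversity.
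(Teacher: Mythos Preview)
The paper gives no argument beyond noting that the corollary follows from theorem \ref{ris} together with standard facts about general-perversity intersection cohomology, so your proposal is considerably more explicit than what the paper provides. Your sheaf-theoretic verification on the absolute side is correct and is indeed the natural way to flesh out the paper's remark: the hypothesis $p_g(Z)\geq cod(Z)-1=\dim L_Z$ forces every truncation in the Deligne construction to be trivial, so the axioms $(AX1)_{p_g}$ hold for $Rj_*(\mathbb{R}\otimes\mathcal{O})$, and theorem \ref{tre} together with remark \ref{ooo} gives $I^{q_g}H^i(X,\mathcal{R}_0)\cong H^i(X-X_{n-1},\mathbb{R})$. One small correction: the stalk at $x\in Z$ computes $H^i(reg(L_Z),\mathbb{R})$, not $H^i(L_Z,\mathbb{R})$, since $j$ is the inclusion of the full regular part; but this still vanishes above degree $\dim L_Z$, so your conclusion is unaffected.

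On the relative side, however, your chain-level claim that $I^{p_g}S_*(X,\mathcal{R}_0)$ equals $S_*(X,\mathcal{R}_0)$ is not correct when $p_g(Z)=cod(Z)-1$ exactly. In that case the allowability bound reads $\sigma^{-1}(Z)\subset\partial\Delta_i$, which excludes any simplex whose interior meets $Z$. Your appeal to the $\mathcal{R}_0$ convention only kills simplices whose \emph{entire} image lies in $X_{n-1}$ (closedness of $X_{n-1}$ forces this once the interior lies there); but a simplex whose interior meets both $reg(X)$ and $Z$ carries a nonzero $\mathcal{R}_0$-coefficient, lies in $S_*(X,\mathcal{R}_0)$, and is not $p_g$-allowable. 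The two complexes are only quasi-isomorphic, not equal, and you have not argued the quasi-isomorphism. The cleanest repair is to bypass the chain level entirely: once the absolute identification is established, derive the relative one from it via theorem \ref{mzb}, exactly as the proof of theorem \ref{ris} obtains \eqref{min} from \eqref{max}. Alternatively, cite directly the fact from Friedman's theory that for perversities $p>t$ the inclusion $I^pS_*(X,\mathcal{R}_0)\hookrightarrow S_*(X,\mathcal{R}_0)$ is a quasi-isomorphism, which is presumably what the paper intends by ``from the definition of intersection cohomology with general perversity''.
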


\begin{cor} If $p_{g}$ is classical perversity in the sense of Goresky-MacPherson and $X_{n-1}=X_{n-2}$ then, for all $i$, we have the following isomorphisms:
\begin{equation}
\mathcal{H}_{abs}^{i}(reg(X),g)\cong H_{2,max}^{i}(reg(X),g)\cong I^{q_{g}}H^{i}(X,\mathbb{R})
\label{mao}
\end{equation}
\begin{equation}
\mathcal{H}_{rel}^{i}(reg(X),g)\cong H_{2,min}^{i}(reg(X),g)\cong I^{p_{g}}H^{i}(X, \mathbb{R})
\end{equation}
\label{z}
\end{cor}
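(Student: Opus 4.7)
The plan is to bootstrap Corollary \ref{z} directly from Theorem \ref{ris} by replacing the stratified coefficient system $\mathcal{R}_{0}$ with the ordinary constant system $\mathbb{R}$ under the additional classical-perversity hypothesis. First I would invoke Theorem \ref{ris} to obtain, with no further work, the isomorphisms $\mathcal{H}^{i}_{abs}(reg(X),g)\cong H^{i}_{2,max}(reg(X),g)\cong I^{q_{g}}H^{i}(X,\mathcal{R}_{0})$ and $\mathcal{H}^{i}_{rel}(reg(X),g)\cong H^{i}_{2,min}(reg(X),g)\cong I^{p_{g}}H^{i}(X,\mathcal{R}_{0})$; this is the portion of the statement already present in the general case.

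Next I would verify that, under the hypothesis that $p_{g}$ is classical in the sense of Definition \ref{per}, the dual perversity $q_{g}=t-p_{g}$ is also classical. Since by assumption $p_{g}(2)=0$ and $0\leq p_{g}(k+1)-p_{g}(k)\leq 1$, and since the top perversity satisfies $t(2)=0$ together with $t(k+1)-t(k)=1$, a termwise subtraction yields $q_{g}(2)=0$ and $0\leq q_{g}(k+1)-q_{g}(k)\leq 1$, so $q_{g}$ is classical as well. This is the only arithmetic check required.

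Finally I would apply Proposition \ref{wik} (equivalently Example \ref{ytr}) to both perversities: since $X_{n-1}=X_{n-2}$ and both $p_{g}$ and $q_{g}$ are traditional perversities, the chain complexes $I^{p_{g}}S_{*}(X,\mathbb{R})$ and $I^{p_{g}}S_{*}(X,\mathcal{R}_{0})$ coincide, and likewise for $q_{g}$. Passing to the cohomology of the dual complexes gives $I^{p_{g}}H^{i}(X,\mathcal{R}_{0})\cong I^{p_{g}}H^{i}(X,\mathbb{R})$ and $I^{q_{g}}H^{i}(X,\mathcal{R}_{0})\cong I^{q_{g}}H^{i}(X,\mathbb{R})$, which substituted into the isomorphisms produced in the first step yield the desired identifications \eqref{mao}. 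There is no substantive obstacle here: the corollary is a direct packaging of Theorem \ref{ris} with Friedman's compatibility result \ref{wik}, and the only small thing to watch is the stability of the classical-perversity condition under the duality $p\mapsto t-p$, which is handled by the elementary check above.
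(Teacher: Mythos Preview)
Your proposal is correct and follows exactly the route the paper intends: the paper states only that the remaining corollaries ``follow immediately from theorem \ref{ris} and from the definition of intersection cohomology with general perversity,'' which is precisely your combination of Theorem \ref{ris} with Proposition \ref{wik} (Example \ref{ytr}). Your explicit verification that $q_{g}=t-p_{g}$ remains a classical perversity is a useful detail the paper leaves implicit.
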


\begin{cor} Let $g, h$ be two quasi edge metrics with weights on $reg(X)$ such that $p_{g}=p_{h}$. Then for all $i$
\begin{equation} 
\mathcal{H}^{i}_{abs}(reg(X),g)\cong H^{i}_{2,max}(reg(X),g)\cong H^{i}_{2,max}(reg(X),h)\cong \mathcal{H}^{i}_{abs}(reg(X),h)
\end{equation} 
  and
\begin{equation} 
\mathcal{H}^{i}_{rel}(reg(X),g)\cong H^{i}_{2,min}(reg(X),g)\cong H^{i}_{2,min}(reg(X),h)\cong \mathcal{H}^{i}_{rel}(reg(X),h)
\end{equation}
In particular a necessary condition for two quasi edge metrics with weights  are quasi-isometric is that they induce perversities with isomorphic intersection cohomology groups. 
\end{cor}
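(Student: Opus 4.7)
The plan is to deduce this corollary directly from Theorem \ref{ris}, using only the fact that the right-hand side of the isomorphisms in that theorem depends on the metric $g$ solely through its associated perversity $p_{g}$ (equivalently, through $q_{g}=t-p_{g}$).

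First I would apply Theorem \ref{ris} separately to $g$ and to $h$. This yields, for every $i=0,\dots,n$,
\begin{equation*}
\mathcal{H}_{abs}^{i}(reg(X),g)\cong H_{2,max}^{i}(reg(X),g)\cong I^{q_{g}}H^{i}(X,\mathcal{R}_{0})
\end{equation*}
and the analogous chain for $h$ with $q_{h}$ in place of $q_{g}$. Similarly, Theorem \ref{ris} gives the corresponding statements for $\mathcal{H}_{rel}^{i}$, $H_{2,min}^{i}$, and $I^{p_{g}}H^{i}(X,\mathcal{R}_{0})$ (respectively $I^{p_{h}}H^{i}(X,\mathcal{R}_{0})$).

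Next I would invoke the hypothesis $p_{g}=p_{h}$. By the very definition $q_{g}=t-p_{g}$ and $q_{h}=t-p_{h}$, the equality $p_{g}=p_{h}$ forces $q_{g}=q_{h}$; hence the intersection cohomology groups $I^{p_{g}}H^{i}(X,\mathcal{R}_{0})$ and $I^{p_{h}}H^{i}(X,\mathcal{R}_{0})$ (respectively $I^{q_{g}}H^{i}(X,\mathcal{R}_{0})$ and $I^{q_{h}}H^{i}(X,\mathcal{R}_{0})$) are literally the same objects. Chaining the two sets of isomorphisms provided by Theorem \ref{ris} through this common intersection cohomology group produces the four-term chains asserted in the corollary. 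Since every step is a direct invocation of a previously proved result, no analytic estimate is required.

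For the final sentence, if $g$ and $h$ are quasi-isometric on $reg(X)$, then the Hilbert spaces $L^{2}\Omega^{i}(reg(X),g)$ and $L^{2}\Omega^{i}(reg(X),h)$ coincide as sets with equivalent norms, and consequently the domains of $d_{max,i}$ (respectively $d_{min,i}$) agree, so the maximal and minimal $L^{2}$ de Rham cohomology groups associated to the two metrics are canonically isomorphic. Combining this invariance with Theorem \ref{ris} forces $I^{q_{g}}H^{i}(X,\mathcal{R}_{0})\cong I^{q_{h}}H^{i}(X,\mathcal{R}_{0})$ and $I^{p_{g}}H^{i}(X,\mathcal{R}_{0})\cong I^{p_{h}}H^{i}(X,\mathcal{R}_{0})$ for every $i$, which is precisely the stated necessary condition. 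There is no genuine obstacle in the argument: once Theorem \ref{ris} is in hand the corollary is essentially a tautology, and the only mild point to verify is the standard invariance of $L^{2}$ de Rham cohomology under quasi-isometry, which follows at once from the agreement of the underlying Hilbert complexes.
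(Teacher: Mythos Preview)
Your proposal is correct and matches the paper's approach: the paper simply states that this corollary (together with the ones surrounding it) follows immediately from Theorem~\ref{ris} and the definition of intersection cohomology with general perversity, which is precisely the chain of isomorphisms you spell out. Your handling of the final quasi-isometry remark via the standard invariance of $L^{2}$ de Rham cohomology is also in keeping with how the paper uses this fact elsewhere (e.g.\ in the proof of Theorem~\ref{ris}).
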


\begin{cor} Let $X'$ be another compact and oriented smoothly stratified pseudomanifold with a Thom-Mather stratification and $h$ a quasi edge metric with weights on $reg(X')$. Let $f:X\rightarrow X'$ a stratum preserving homotopy equivalence, see \cite{KW} pag 62 for the definition. Suppose that both $p_{g}$ and $p_{h}$ depend only on the codimension of the strata and that $p_{g}=p_{h}$. Then for all $i$

\begin{equation} 
\mathcal{H}^{i}_{abs}(reg(x),g)\cong H^{i}_{2,max}(reg(X),g)\cong H^{i}_{2,max}(reg(X'),h)\cong \mathcal{H}^{i}_{abs}(reg(X'),h)
\end{equation} 
  and
\begin{equation} 
\mathcal{H}^{i}_{rel}(reg(x),g)\cong H^{i}_{2,min}(reg(X),g)\cong H^{i}_{2,min}(reg(X'),h)\cong \mathcal{H}^{i}_{rel}(reg(X'),h)
\end{equation}
\end{cor}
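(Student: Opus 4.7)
The plan is to reduce the statement to the topological invariance of intersection cohomology under stratum preserving homotopy equivalences, using Theorem \ref{ris} as the bridge between analysis and topology.

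First, I would apply Theorem \ref{ris} to the metric $g$ on $reg(X)$ to obtain, for every $i$, the chain of isomorphisms
\begin{equation*}
\mathcal{H}^{i}_{abs}(reg(X),g)\cong H^{i}_{2,max}(reg(X),g)\cong I^{q_{g}}H^{i}(X,\mathcal{R}_{0}),
\end{equation*}
\begin{equation*}
\mathcal{H}^{i}_{rel}(reg(X),g)\cong H^{i}_{2,min}(reg(X),g)\cong I^{p_{g}}H^{i}(X,\mathcal{R}_{0}),
\end{equation*}
and analogously to the metric $h$ on $reg(X')$ to obtain the corresponding isomorphisms
\begin{equation*}
\mathcal{H}^{i}_{abs/rel}(reg(X'),h)\cong H^{i}_{2,max/min}(reg(X'),h)\cong I^{q_{h}/p_{h}}H^{i}(X',\mathcal{R}_{0}).
\end{equation*}
By hypothesis $p_{g}=p_{h}$, and since $q_{g}=t-p_{g}$ and $q_{h}=t-p_{h}$, also $q_{g}=q_{h}$.

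The second step, which is the core of the argument, is to exhibit isomorphisms
\begin{equation*}
I^{p_{g}}H^{i}(X,\mathcal{R}_{0})\cong I^{p_{h}}H^{i}(X',\mathcal{R}_{0}), \qquad I^{q_{g}}H^{i}(X,\mathcal{R}_{0})\cong I^{q_{h}}H^{i}(X',\mathcal{R}_{0})
\end{equation*}
induced by $f$. This is where the hypothesis that $p_{g}$ and $p_{h}$ depend only on the codimension enters in an essential way: for perversities that are genuinely functions of codimension alone, intersection (co)homology with stratified coefficient system $\mathcal{R}_{0}$ is a stratified homotopy invariant in Friedman's framework (compare \cite{GP}, \cite{IGP}; the proof goes through the axiomatic characterization in theorem \ref{tre} applied to the Deligne sheaf complex pulled back along $f$, together with the fact that a stratum preserving homotopy equivalence induces isomorphisms on the local cohomology stalks that enter the attaching conditions). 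Since $p_{g}=p_{h}$ is codimension-dependent by assumption, $f$ realises the required isomorphism.

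Concatenating the isomorphisms from the two steps, we conclude the full chain. The only delicate point, and the one I regard as the main obstacle, is step two: one must ensure that the topological invariance theorem of Goresky--MacPherson extends to Friedman's setting of general (integer-valued, possibly negative, possibly exceeding the top perversity) perversities together with the stratified coefficient system $\mathcal{R}_{0}$. The codimension-dependence hypothesis is exactly what makes the Deligne construction well-defined and independent of the stratification within the stratum preserving homotopy class, so the standard argument via the axioms $(AX1)_{p,\mathfrak{X},\mathcal{R}\otimes\mathcal{O}}$ applies verbatim. Once this invariance is in hand the corollary is immediate.
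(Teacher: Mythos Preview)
Your proposal is correct and follows exactly the approach the paper intends: the paper states that this corollary (together with the preceding ones) ``follows immediately from theorem \ref{ris} and from the definition of intersection cohomology with general perversity,'' which is precisely your two-step argument of applying Theorem \ref{ris} on each side and then invoking the stratified homotopy invariance of $I^{p}H^{*}(-,\mathcal{R}_{0})$ for perversities depending only on codimension. Your remarks on why the codimension-dependence hypothesis is needed for the invariance in Friedman's setting spell out what the paper leaves implicit.
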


\vspace{1 cm}

\textbf{Acknowledgments.} We wish to thank Paolo Piazza for  suggesting this problem, for many interesting discussions and for his help. We wish also to thank Rafe Mazzeo, Eugenie Hunsicker, Greg Friedman, Pierre Albin and Erich Leicthnam for helpful
conversations and emails.


\begin{thebibliography}{99}
\bibitem {ALMP} P{.} Albin, E{.} Leichtnam, R{.} Mazzeo, P{.} Piazza, The signature package on Witt spaces,  Annales Scientifiques de l'Ecole Normale Supérieure, volume 45, 2012.
\bibitem {BA}   M{.} Banagl, Topological invariants of stratified spaces, Springer Monographs in Mathematics, Springer-Verlag, New York, 2006
\bibitem {B}    A{.} Borel e alt. , Intersection cohomology, Progress in Mathematics, vol 50, Birkhauser, Boston, 1984.
\bibitem {BHS}  J{.} Brasselet, G{.} Hector, M{.} Saralegi, $L^2-$cohomologie  des espaces statifiés, Manuscripta Math, 76 (1992), 21-32.
\bibitem {BLE}   J{.} Brasselet, A{.} Legrand, Un complexe de formes différentielles à croissance bornée sur une variété stratifiée, Ann. Scuola Norm. Sup. Pisa, (1994) 21, no 2 213-234.
\bibitem {BL}   J{.} Bruning, M{.} Lesch, Hilbert complexes, J. Func. Anal. 108 1992 88-132. 
\bibitem {JC}   J{.} Cheeger, On the spectral geometry of spaces with cone-like singularities, Proc. Nat. Acad. Sci. USA 76 (1979), no. 5, 2103-2106.
\bibitem {C}    J{.} Cheeger, On the Hodge theory of Riemannian pseudomanifolds, In Proceedings of symposia in  pure mathematics, vol 36, Amer. Math. Soc. 1980, 91-106
\bibitem {CD}   J{.} Cheeger, X{.} Dai $L^{2}$ cohomology of a non-isolated conical singularity and nonmultiplicativity of the signature, Riemannian Topology and Geometric Structures on Manifolds, Progress in Mathematics, Birkhauser 2009, Volume 271, 1-24,
\bibitem {DK}   J{.} Davis, P{.} Kirk, Lecture notes in algebraic topology, vol. 35 of Graduate Studies in Math. American Mathematical Society, 2001.
\bibitem {GP}   G{.} Friedman, An introduction to intersection homology with general perversity functions, in Topology of Stratified Spaces, Mathematical Sciences Research Institute Publications 58, Cambridge University Press, New York (2010).
\bibitem {IGP}  G{.} Friedman, Intersection homology with general perversities, Geom. Dedicata (2010) 148: 103-135.
\bibitem {SP}   G{.} Friedman, Singular chain intersection homology for traditional and super-perversities, Trans. Amer. Math. Soc. 359 (2007) 1977-2019.
\bibitem {GM}   M{.} Goresky, R{.} MacPherson, Intersection homology theory, Topology 19 (1980) 135-162.
\bibitem {GMA}  M{.} Goresky, R{.} MacPherson, Intersection homology II, Invent. Math. 72 (1983), 77-129.
\bibitem {AH}   A{.} Hatcher, Algebraic topology, Cambridge University Press, 2002.
\bibitem {H}    E{.} Hunsicker, Hodge and signature theorems for a family of manifolds with fibre bundle boundary, Geom. Topol. 11 (2007), 1581-1622.
\bibitem {HM}   E{.} Hunsicker, R{.} Mazzeo, Harmonic forms on manifolds with edges, Int. Math. Res. Not. (2005) no. 52, 3229-3272.
\bibitem {K}    H{.} King, Topological invariance of intersection homology without sheaves, Topology Appl. 20 (1985), 149-160. 
\bibitem {KW}   F{.} Kirwan, J{.} Woolf, An inroduction to intersection homology theory. Second edition, Chapman \& Hall/CRC, Boca Raton, FL, 2006.
\bibitem {KK}   K{.} Kodaira, Harmonic fields in riemannian manifolds (Generalized Potential Theory), Ann. Math., 50. 1949, 587-665.
\bibitem {MN}   M{.} Nagase, $L^{2}-$cohomology and intersection homology of stratified spaces, Duke Math. J. 50 (1983) 329-368.
\bibitem {MAN}  M{.} Nagase, Sheaf theoretic $L^{2}-$cohomology, Advanced Studies in Pure Math. 8, 1986, Complex analytic singularities, pp 273-279. 
\bibitem {NS}   N{.} Steenrod, The topology of fiber bundles, Princeton University Press, Princeton, NJ, 1951.
\end  {thebibliography}

Dipartimento di Matematica, Sapienza Università di Roma

\texttt{E-mail address}: bei@math.uniroma1.it

\end{document}